\newtheorem{remarkArt}{Remark}
\begin{document}

\title{Homogenization of a pseudo-parabolic system via a spatial-temporal decoupling: upscaling and corrector estimates for perforated domains
\thanks{We acknowledge the Netherlands Organisation of Scientific Research (NWO) for the MPE grant 657.000.004, the NWO Cluster Nonlinear Dynamics in Natural Systems (NDNS+) for funding a research stay of AJV at Karlstads Universitet and the Swedish Royal Academy of Science (KVA) for the Stiftelsen GS Magnusons fund grant MG2018-0020.}
}

\titlerunning{Pseudo-parabolic error estimates in perforated domain via space-time decoupling}        

\author{Arthur. J. Vromans         \and Fons van de Ven \and
        Adrian Muntean 
}


\institute{A.J. Vromans \and A.A.F. van de Ven\at
              Centre for Analysis, Scientific computing and Applications (CASA), Eindhoven University of Technology \\
              Den Dolech 2, 5612AZ, Eindhoven, The Netherlands\\
              \email{a.j.vromans@tue.nl;a.a.f.v.d.ven@tue.nl}           
           \and
           A.J. Vromans \and A. Muntean \at
              Department of Mathematics and Computer Science, Karlstad University\\
              Universitetsgatan 2, 651 88, Karlstad, Sweden\\
              \email{adrian.muntean@kau.se}
}

\date{Received: date / Accepted: date}

\maketitle

\begin{abstract} In this paper, we determine the convergence speed of an upscaling of a pseudo-parabolic system containing drift terms with scale separation of size $\epsilon\ll1$. Both the upscaling and convergence speed determination exploit a natural spatial-temporal decomposition, which splits the pseudo-parabolic system into a spatial elliptic partial differential equation and a temporal ordinary differential equation. We extend the applicability to space-time domains that are a product of spatial and temporal domains, such as a time-independent perforated spatial domain. Finally, for special cases we show convergence speeds for global times, i.e. $t\in\mathbf{R}_+$, by using time intervals that converge to $\mathbf{R}_+$ as $\epsilon\downarrow0$.
\keywords{periodic homogenization \and pseudo-parabolic system\and mixture theory \and upscaled system \and corrector estimates \and perforated domains}
 \subclass{35B27 \and 35K70 \and 35A35\and 40A10}
\end{abstract}
\section{Introduction}
\label{sec: intro}
Corrosion of concrete by acidic compounds is a problem for construction as corrosion can lead to erosion and degradation of the structural integrity of concrete structures \cite{Rendell2002}, \cite{Sand}. Structural failures and collapse as a result of concrete corrosion \cite{During1997}, \cite{GuFordMitchell}, \cite{Trethewey1995} is detrimental to society as it often impacts crucial infrastructure, typically leading to high costs \cite{Elsener}, \cite{Verdink}. Moreover, these failures can be avoided with sufficient monitoring and timely repairs based on \textit{a priori} calculations of the maximal lifespan of the concrete. These calculations have to take into account the heterogeneous nature of the concrete \cite{OrtizPopov1982}, the physical properties of the concrete \cite{Monteiro1996}, the corrosion reaction \cite{Taylor1997}, and the expansion/contraction behaviour of corroded concrete mixtures, see \cite{Bohm1998}, \cite{ClarelliFasanoNatalini}, \cite{FusiFarinaPrimicerio}. For example, the typical length scale of the concrete heterogeneities is much smaller than the typical length scale used in concrete construction \cite{OrtizPopov1982}. Moreover, concrete corrosion has a characteristic time that is also much smaller than the typical expected lifespan of concrete structures \cite{Taylor1997}. Hence, it is computationally expensive to use the heterogeneity length scale for simulations of concrete constructions such as bridges.   However, using averaging techniques in order to obtain effective properties on the typical length scale of concrete constructions, one can significantly decrease computational costs with the potential of not losing accuracy.\\
$\;$\\
Often a problem contains a hierarchy of separated scales: from a microscale via intermediate scales to a macroscale. With averaging techniques one can obtain effective behaviours at a higher scale from the underlying lower scale. For example, Ern and Giovangigli used averaging techniques on statistical distributions in kinetic chemical equilibrium regimes to obtain continuous macroscopic equations for mixtures, see \cite{ErnGiovangigli1998} or see Chapter 4 of \cite{Giovangigli1999} for a variety of effective macroscopic equations obtained with this averaging technique.\\
Of course, the use of averaging techniques to obtain effective macroscopic equations in mixture theory is by itself not new, see Fig 7.2 in \cite{Corwin} for an early application from 1934. The main problem with averaging techniques is choosing the right averaging technique for your problem. In this respect, homogenization can be regarded as a successful method, since it expresses conditions under which macroscale behaviour can be obtained from microscale behaviour and it has been successfully used to derive not only macroscale behaviour but also the convergence speed depending on the scale separation between the macroscale and the microscale.\\
$\;$\\
We perform homogenization via two-scale convergence as an averaging technique to obtain the macroscopic behaviour. Moreover, we use formal asymptotic expansions to determine the speed of convergence via so-called corrector estimates. These estimates follow a procedure similar to those used by Cioranescu and Saint Jean-Paulin in Chapter 2 of \cite{CioranescuStJeanPaulin1998}. Derivation via homogenization of constitutive laws, such as those arising from mixture theory, is a classical subject in homogenization, see \cite{Sanchez-Palencia}. Homogenization methods, upscaling, and corrector estimates are active research subjects due to the interdisciplinary nature of applying these mathematical techniques to real world problems and the complexities arising from the problem-specific constraints.\\ 
$\;$\\
The microscopic equations of our concrete corrosion model are conservation laws for mass and momentum for an incompressible mixture, see \cite{Vromans2018PAC} and \cite{VromansLIC} for details. The existence of weak solutions of this model was shown in \cite{Vromans2017CASA} 
and Chapter 2 of \cite{VromansLIC}. The parameter space dependence of the existence region for this model was explored in \cite{Vromans2018PAC}. The two-scale convergence for a subsystem of these microscopic equations, a pseudo-parabolic system, was shown in \cite{Vromans2018CASA}.
This paper handles the same pseudo-parabolic system as in \cite{Vromans2018CASA}
but on a perforated microscale domain.\\
$\;$\\
In \cite{Pesz-Showalter-Yi2009}, Peszy\'{n}ska, Showalter and Yi investigated the upscaling of a pseudo-parabolic system via two-scale convergence using a natural decomposition that splits the spatial and temporal behaviour. They looked at several different scale separation cases: classical case, highly heterogeneous case (also known as high-contrast case), vanishing time-delay case and Richards equation of porous media. These cases were chosen to showcase the ease with which upscaling could be done via this natural decomposition.\\
$\;$\\
In this paper, we point out that this natural decomposition of \cite{Pesz-Showalter-Yi2009} allows for the determination of the convergence speed via corrector estimates. Using such decomposition, the corrector estimates for the pseudo-parabolic equation follow straightforwardly from those of the spatially elliptic system with corrections due to the temporal first-order ordinary differential equation. The convergence speed we obtain, coincides for bounded spatial domains with known results for both elliptic systems and pseudo-parabolic systems on bounded temporal domains, see \cite{Reichelt2016}. Finally, we apply our results to a concrete corrosion model.\\
$\;$\\
The remainder of this paper is divided into seven parts:\\
\textbf{Section \ref{s: sec2}:} Notation and problem statement,\\
\textbf{Section \ref{sec: main}:} Main results,\\
\textbf{Section \ref{s: upscaling}:} Upscaling procedure,\\
\textbf{Section \ref{s: corrector}:} Corrector estimates,\\
\textbf{Section \ref{s: application}:} Application to a concrete corrosion model,\\
\textbf{Appendix \ref{app: constants}:} Exact forms of coefficients in corrector estimates,\\
\textbf{Appendix \ref{a: 2-scale}:} Introduction to two-scale convergence.\\
\section{Notation and problem statement}\label{s: sec2}
\subsection{Geometry of the medium and related function spaces}
We introduce the description of the geometry of the medium in question with a variant of the construction found in \cite{MunteanChalupecky2011}. Let $(0,T)$, with $T>0$, be a time-interval and $\Omega\subset\mathbf{R}^d$ for $d\in\{2,3\}$ be a simply connected bounded domain with a $C^2$-boundary $\partial\Omega$. Take $Y\subset\Omega$ a simply connected bounded domain, or more precisely there exists a diffeomorphism  $\gamma:\mathbf{R}^d\rightarrow\mathbf{R}^d$ such that $\text{Int}(\gamma([0,1]^d))=Y$.\\
We perforate $Y$ with a smooth open set $\mathcal{T} = \gamma(\mathcal{T}_0)$ for a smooth open set $\mathcal{T}_0\subset(0,1)^d$ such that $\overline{\mathcal{T}}\subset\overline{Y}$ with a $C^2$-boundary $\partial \mathcal{T}$ that does not intersect the boundary of $Y$, $\partial\mathcal{T}\cap\partial Y=\emptyset$, and introduce $Y^* = Y\backslash \overline{\mathcal{T}}$. Remark that $\partial \mathcal{T}$ is assumed to be $C^2$-regular.\\
Let $G_0$ be lattice\footnote{A lattice of a locally compact group $\mathbb{G}$ is a discrete subgroup $\mathbb{H}$ with the property that the quotient space $\mathbb{G}/\mathbb{H}$ has a finite invariant (under $\mathbb{G}$) measure. A discrete subgroup $\mathbb{H}$ of $\mathbb{G}$ is a group $\mathbb{H}\subsetneq\mathbb{G}$ under group operations of $\mathbb{G}$ such that there is (an open cover) a collection $\mathbb{C}$ of open sets $C\subsetneq\mathbb{G}$ satisfying $\mathbb{H}\subset \cup_{C\in\mathbb{C}}C$ and for all $C\in\mathbb{C}$ there is a unique element $h\in\mathbb{H}$ such that $h\in C$.} of the translation group $\mathcal{T}_d$ on $\mathbf{R}^d$ such that $[0,1]^d = \mathcal{T}_d/G_0$. Hence, we have the following properties: $\bigcup_{g\in G_0}g([0,1]^d)=\mathbf{R}^d$ and $(0,1)^d\cap g((0,1)^d)=\emptyset$ for all $g\in G_0$ not the identity-mapping. Moreover, we demand that the diffeomorphism $\gamma$ allows $G_\gamma:=\gamma\circ G_0\circ\gamma^{-1}$ to be a discrete subgroup of $\mathcal{T}_d$ with $\overline{Y} = \mathcal{T}_d/G_\gamma$.\\
Assume that there exists a sequence $(\epsilon_h)_h\subset(0,\epsilon_0)$ such that $\epsilon_h\rightarrow0$ as $h\rightarrow\infty$ (we omit the subscript $h$ when it is obvious from context that this sequence is mentioned). Moreover, we assume that for all $\epsilon_h\in(0,\epsilon_0)$ there is a set $G_\gamma^{\epsilon_h} = \{\epsilon_h g \text{ for }g\in G_\gamma\}$ with which we introduce $\mathcal{T}^{\epsilon_h} = \Omega\cap G_\gamma^{\epsilon_h}(\mathcal{T})$, the set of all holes and parts of holes inside $\Omega$. Hence, we can define the domain $\Omega^{\epsilon_h} = \Omega\backslash\mathcal{T}^{\epsilon_h}$ and we demand that $\Omega^{\epsilon_h}$ is connected for all $\epsilon_h\in(0,\epsilon_0)$. We introduce for all $\epsilon_h\in(0,\epsilon_0)$ the boundaries $\partial_{int}\Omega^{\epsilon_h}$ and $\partial_{ext}\Omega^{\epsilon_h}$ as $\partial_{int}\Omega^{\epsilon_h}= \bigcup_{g\in G_\gamma^{\epsilon_h}}\{\partial g(\overline{\mathcal{T}})\mid g(\overline{\mathcal{T}})\subset\Omega\}$ and  $\partial_{ext}\Omega^{\epsilon_h}=\partial\Omega^{\epsilon_h}\backslash \partial_{int}\Omega^{\epsilon_h}$. The first boundary contains all the boundaries of the holes fully contained in $\Omega$, while the second contains the remaining boundaries of the perforated region $\Omega$.\\
Note, $\mathcal{T}$ does not depend on $\epsilon$, since this could give rise to unwanted complicating effects such as treated in \cite{MarchenkoKrushlov}.\\
$\;$\\
Having the domains specified, we focus on defining the needed function spaces. We start by introducing $C_\#(Y)$, the space of continuous function defined on $Y$ and periodic with respect to $Y$ under $G_\gamma$. To be precise:
\begin{equation} \label{eq: C-periodic}
C_\#(Y) = \{f\in C(\mathbf{R}^d)|f\circ g = f\,\text{for all }g\in G_\gamma\}.
\end{equation}
Hence, the property ``$Y$-periodic'' means ``invariant under $G_\gamma$'' for functions defined on $Y$. Similarly the property ``$Y^*$-periodic'' means ``invariant under $G_\gamma$'' for functions defined on $Y^*$.\\
With $C_\#(Y)$ at hand, we construct Bochner spaces like $L^p(\Omega;C_\#(Y))$ for $p\geq1$ integer. For a detailed explanation of Bochner spaces, see Section 2.19 of \cite{KufnerFucik1977}. These types of Bochner spaces exhibit properties that hint at two-scale convergence, as is defined in Section \ref{s: two-scale}. Similar function spaces are constructed for $Y^*$ in an analogous way.

$\;$\\
Introduce the space
\begin{equation}\label{eq: V-space}
    \mathbb{V}_\epsilon = \{v\in H^1(\Omega^\epsilon)\mid v = 0\text{ on }\partial_{ext}\Omega^\epsilon \}
\end{equation}
equipped with the seminorm
\begin{equation}\label{eq: V-norm}
    \|v\|_{\mathbb{V}_\epsilon} = \|\nabla v\|_{L^2(\Omega^\epsilon)^d}.
\end{equation}
\begin{remarkArt}\label{r: poincare}
The seminorm in (\ref{eq: V-norm}) is equivalent to the usual $\mathcal{H}^1$-norm by the Poincar\'{e} inequality, see Lemma 2.1 on page 14 of \cite{CioranescuStJeanPaulin1998}. Moreover, this equivalence of norms is uniform in $\epsilon$.
\end{remarkArt}
For correct use of functions spaces over $Y$ and $Y^*$, we need an embedding result, which is based on an extension operator. The following theorem and corollary are Theorem 2.10 and Corollary 2.11 in Chapter 2 of \cite{CioranescuStJeanPaulin1998}.
\begin{theorem}\label{t: extension} Suppose that the domain $\Omega^\epsilon$ is such that $\mathcal{T}\subset Y$ is a smooth open set with a $C^2$-boundary that does not intersect the boundary of $Y$ and such that the boundary of $\mathcal{T}^\epsilon$ does not intersect the boundary of $\Omega$. Then there exists an extension operator $\mathcal{P}^\epsilon$ and a constant $C$ independent of $\epsilon$ such that
\begin{equation}\label{eq: extendregul}
\mathcal{P}^\epsilon\in\mathcal{L}(L^2(\Omega^\epsilon);L^2(\Omega))\cap\mathcal{L}(\mathbb{V}_\epsilon;H^1_0(\Omega)),
\end{equation}
and for any $v\in\mathbb{V}_\epsilon$, we have the bounds
\begin{equation}
    \|\mathcal{P}^\epsilon v\|_{L^2(\Omega)}\leq C\|v\|_{L^2(\Omega_\epsilon)},\quad\|\nabla\mathcal{P}^\epsilon v\|_{L^2(\Omega)^d}\leq C\|\nabla v\|_{L^2(\Omega_\epsilon)^d}.
\end{equation}
\end{theorem}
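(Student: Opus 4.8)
\quad The plan is the standard two-step cell construction. First I would build an extension operator on the reference perforated cell $Y^{*}=Y\setminus\overline{\mathcal{T}}$, then transplant it to every $\epsilon$-cell $\epsilon g(Y)$, $g\in G_{\gamma}$, via the affine map $\Phi_{g}^{\epsilon}(y)=\epsilon g(y)$, and patch the local pieces together. The decisive structural fact is that under $\Phi_{g}^{\epsilon}$ the quantities $\|\nabla\,\cdot\,\|_{L^{2}}^{2}$ and $\|\cdot\|_{L^{2}}^{2}$ over the image cell and over the source cell are each multiplied by the \emph{same} power of $\epsilon$ (namely $\epsilon^{d-2}$ and $\epsilon^{d}$ respectively), so the constants obtained on the reference cell are inherited by $\mathcal{P}^{\epsilon}$ uniformly in $\epsilon$ and in $g$.

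\emph{Reference cell.} Because $\mathcal{T}$ is a smooth open set with $C^{2}$-boundary, $\overline{\mathcal{T}}\subset Y$ and $\partial\mathcal{T}\cap\partial Y=\emptyset$, the set $Y^{*}$ is a bounded connected $C^{2}$-domain and the hole $\mathcal{T}$ sits strictly inside $Y$. I would take a bounded local extension across the $C^{2}$-surface $\partial\mathcal{T}$ (for instance by a boundary-flattening reflection) and localise it with a cutoff supported near $\partial\mathcal{T}$, producing $P\colon H^{1}(Y^{*})\to H^{1}(Y)$ with $Pv=v$ on $Y^{*}$, with $Pv$ equal to $v$ in a fixed neighbourhood of $\partial Y$, and with $\|Pv\|_{L^{2}(Y)}\le C_{0}\|v\|_{L^{2}(Y^{*})}$. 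Since $P$ maps constants to constants, $\nabla Pv=\nabla P(v-\bar v)$ for $\bar v$ the mean of $v$ over $Y^{*}$, so the Poincar\'{e}--Wirtinger inequality on $Y^{*}$ closes the seminorm bound $\|\nabla Pv\|_{L^{2}(Y)^{d}}\le C_{1}\|\nabla v\|_{L^{2}(Y^{*})^{d}}$ with $C_{1}$ depending only on the cell geometry. Keeping the trace on $\partial Y$ unchanged is what will let neighbouring cell extensions agree across shared faces.

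\emph{Scaling and patching.} For each $g$ with $\epsilon g(\overline{Y})\subset\Omega$ I would set $P^{\epsilon}_{g}u=\bigl(P(u\circ\Phi_{g}^{\epsilon})\bigr)\circ(\Phi_{g}^{\epsilon})^{-1}$ on $\epsilon g(Y)$; by the scaling remark this inherits the bounds with the same $C_{0},C_{1}$. For the cells touching $\partial\Omega$ I would invoke the hypothesis $\partial\mathcal{T}^{\epsilon}\cap\partial\Omega=\emptyset$: no hole is cut by $\partial\Omega$, so in an $O(\epsilon)$-layer along $\partial\Omega$ the set $\Omega^{\epsilon}$ already coincides with $\Omega$ and one simply puts $\mathcal{P}^{\epsilon}u=u$ there. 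Gluing the pieces defines $\mathcal{P}^{\epsilon}u$, equal to $u$ on $\Omega^{\epsilon}$; trace-preservation from the reference step gives $\mathcal{P}^{\epsilon}u\in H^{1}(\Omega)$, the condition $u=0$ on $\partial_{ext}\Omega^{\epsilon}\supseteq\partial\Omega$ upgrades this to $H^{1}_{0}(\Omega)$, and summing the cellwise inequalities over the essentially disjoint cells, together with the trivial contribution of the boundary layer, yields (\ref{eq: extendregul}) and the two norm bounds with $C=\max\{C_{0},C_{1}\}$.

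The hard part is not any single estimate but the bookkeeping that keeps everything uniform in $\epsilon$: one must (i) arrange the reference extension so that it preserves the trace on $\partial Y$ and maps constants to constants --- the former makes the patched function globally $H^{1}$, the latter lets the (scale-invariant) Poincar\'{e}--Wirtinger constant close the gradient bound --- and (ii) handle the $O(\epsilon)$-layer along $\partial\Omega$ separately, since the periodic cell pattern is broken there; it is precisely the assumption $\partial\mathcal{T}^{\epsilon}\cap\partial\Omega=\emptyset$ that rescues this last case. A fully detailed argument is given as Theorem~2.10 in Chapter~2 of \cite{CioranescuStJeanPaulin1998}.
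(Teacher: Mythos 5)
The paper does not prove this theorem; it simply states it and attributes it verbatim to Theorem~2.10 and Corollary~2.11 in Chapter~2 of \cite{CioranescuStJeanPaulin1998}. Your sketch reconstructs the standard argument behind that citation (reference-cell extension, seminorm bound closed by Poincar\'{e}--Wirtinger after arranging that constants map to constants, $\epsilon$-scaling that cancels on both sides of each inequality, patching via trace-preservation on $\partial Y$), and then defers to the same reference. So the route is the right one and the reference-cell, Poincar\'{e}--Wirtinger, and scaling steps are all sound.

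The one genuine gap is the boundary-layer step. You assert that the hypothesis $\partial\mathcal{T}^\epsilon\cap\partial\Omega=\emptyset$ implies that $\Omega^\epsilon$ coincides with $\Omega$ on an $O(\epsilon)$-layer along $\partial\Omega$, so one can take the identity extension there. That implication does not hold with the paper's definition of the perforated domain. The hypothesis only forbids holes that are \emph{cut} by $\partial\Omega$; a hole $\epsilon g(\overline{\mathcal{T}})$ may still lie entirely inside $\Omega$ at arbitrarily small distance from $\partial\Omega$, and since the paper sets $\mathcal{T}^\epsilon=\Omega\cap G_\gamma^\epsilon(\mathcal{T})$, such near-boundary holes \emph{are} removed, including ones sitting in cells that straddle $\partial\Omega$. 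On a straddling cell $u$ is not available on all of $\epsilon g(Y^*)$, so the scaled reference extension $P^\epsilon_g$ cannot be applied directly, and there is no uniform $c\epsilon$-buffer between $\partial\mathcal{T}^\epsilon$ and $\partial\Omega$ to guarantee the cutoff neighbourhood of $\partial\mathcal{T}$ stays inside $\Omega$. The classical construction in \cite{CioranescuStJeanPaulin1998} sidesteps this by adopting a slightly different convention: only holes belonging to cells $\epsilon g(\overline{Y})\subset\Omega$ are removed, which really does leave an unperforated $O(\epsilon)$-collar where the identity extension works. To make your sketch rigorous you should either adopt that convention (and note the discrepancy with the paper's definition of $\Omega^\epsilon$) or give a separate argument for the boundary cells; as written, the $O(\epsilon)$-layer claim is where the proof breaks.
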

\begin{corollary}\label{c: extension} There exists a constant $C$ independent of $\epsilon$ such that for all $v\in\mathbb{V}_\epsilon$
\begin{equation}
    \|\mathcal{P}^\epsilon v\|_{H^1_0(\Omega)}\leq C\|v\|_{\mathbb{V}_\epsilon}.
\end{equation}
\end{corollary}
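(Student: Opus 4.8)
The plan is to read the corollary off directly from Theorem \ref{t: extension} together with the two Poincar\'{e}-type equivalences recorded in Remark \ref{r: poincare}. First I would note that, by the mapping property \eqref{eq: extendregul}, $\mathcal{P}^\epsilon v\in H^1_0(\Omega)$ whenever $v\in\mathbb{V}_\epsilon$, so the left-hand side is well defined. Then I would recall that, since $\Omega$ is bounded, $H^1_0(\Omega)$ can be normed equivalently either by the full $H^1$-norm $\|w\|_{H^1_0(\Omega)}^2=\|w\|_{L^2(\Omega)}^2+\|\nabla w\|_{L^2(\Omega)^d}^2$ or by $\|\nabla w\|_{L^2(\Omega)^d}$ alone, with equivalence constants depending only on $\Omega$ (classical Poincar\'{e} inequality on a bounded domain).

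For the estimate itself, using the gradient seminorm as the norm on $H^1_0(\Omega)$, the second bound in Theorem \ref{t: extension} gives at once
\[
\|\mathcal{P}^\epsilon v\|_{H^1_0(\Omega)}\;\le\;C_\Omega\,\|\nabla\mathcal{P}^\epsilon v\|_{L^2(\Omega)^d}\;\le\;C_\Omega C\,\|\nabla v\|_{L^2(\Omega^\epsilon)^d}\;=\;C_\Omega C\,\|v\|_{\mathbb{V}_\epsilon},
\]
where $C$ is the $\epsilon$-independent constant of Theorem \ref{t: extension} and $C_\Omega$ is the Poincar\'{e} constant of $\Omega$. Alternatively, if one prefers the full $H^1_0$-norm, one combines both bounds of Theorem \ref{t: extension} with the $\epsilon$-uniform Poincar\'{e} inequality on $\Omega^\epsilon$ from Remark \ref{r: poincare}, that is $\|v\|_{L^2(\Omega^\epsilon)}\le C_P\|\nabla v\|_{L^2(\Omega^\epsilon)^d}$ with $C_P$ independent of $\epsilon$, to obtain $\|\mathcal{P}^\epsilon v\|_{H^1_0(\Omega)}^2\le C^2(1+C_P^2)\|\nabla v\|_{L^2(\Omega^\epsilon)^d}^2$.

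The only point that requires care — and the reason the statement is isolated as a separate corollary — is the $\epsilon$-independence of every constant in the chain: $C$ from Theorem \ref{t: extension} is $\epsilon$-independent by hypothesis, the Poincar\'{e} constant on $\Omega$ does not involve $\epsilon$ at all, and the Poincar\'{e} constant on the perforated domain $\Omega^\epsilon$ is uniform in $\epsilon$ precisely by Remark \ref{r: poincare} (Lemma 2.1 of \cite{CioranescuStJeanPaulin1998}). Hence the constant in the corollary may be taken as $C_\Omega C$ (or $C\sqrt{1+C_P^2}$), which is independent of $\epsilon$. I do not expect any genuine obstacle beyond this bookkeeping of constants.
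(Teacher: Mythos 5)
Your argument is correct and is exactly the standard (and essentially only) route to this corollary: combine the gradient bound $\|\nabla\mathcal{P}^\epsilon v\|_{L^2(\Omega)^d}\le C\|\nabla v\|_{L^2(\Omega^\epsilon)^d}$ from Theorem~\ref{t: extension} with the Poincar\'e inequality (on $\Omega$, since $\mathcal{P}^\epsilon v\in H^1_0(\Omega)$, or uniformly on $\Omega^\epsilon$ via Remark~\ref{r: poincare}) to control the $L^2$ part, all constants being $\epsilon$-independent. The paper simply cites Corollary~2.11 of Cioranescu--Saint Jean Paulin without reproducing a proof, and your derivation matches what that reference does; the only small blemish is the phrasing in your main chain, where you announce you are taking the gradient seminorm as the $H^1_0$-norm yet still insert a factor $C_\Omega$ in the first inequality (which would then be an equality with $C_\Omega=1$) --- harmless, since your alternative argument via the full norm is stated cleanly and both yield the claim.
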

Introduce the notation $\hat{\cdot}$, a hat symbol, to denote extension via the extension operator $\mathcal{P}^\epsilon$.
\subsection{The Neumann problem (\ref{eq: Aeps-sys})-(\ref{eq: Vhole=0})}
The notation $\nabla = (\frac{\mathrm{d}}{\mathrm{d}x_1},\ldots,\frac{\mathrm{d}}{\mathrm{d}x_d})$ denotes the vectorial total derivative with respect to the components of $\vec{x}  = (x_1,\ldots,x_d)^\top$ for functions depending on both $\vec{x}$ and $\vec{x}/\epsilon$. Spatial vectors have $d$ components, while variable vectors have $N$ components. Tensors have $d^iN^j$ components for $i$, $j$ nonnegative integers. Furthermore, the notation
\begin{equation}\label{eq: notation}c^\epsilon(t,\vec{x}) = c(t,\vec{x},\vec{x}/\epsilon)\end{equation} is used for the $\epsilon$-independent functions $c(t,\vec{x},\vec{y})$ in assumption (A1) further on. Moreover, the spatial inner product is denoted with $\cdot$, while the variable inner product is just seen as a product or operator acting on a variable vector or tensor.\\
$\;$\\
Let $T>0$. We consider the following Neumann problem posed on $(0,T)\times\Omega^\epsilon:$
\begin{subequations}
\begin{align}
    (\mathcal{A}^\epsilon \vec{V}^\epsilon)_\alpha&:=\sum_{\beta=1}^NM_{\alpha\beta}^\epsilon V^\epsilon_\beta-\sum_{i,j=1}^d\frac{\mathrm{d}}{\mathrm{d}x_i}\left(E_{ij}^\epsilon\frac{\mathrm{d} V^\epsilon_\alpha}{\mathrm{d} x_j}+\sum_{\beta=1}^N\tens{D}^\epsilon_{i\alpha\beta}V^\epsilon_\beta\right)\cr
    &= \mathcal{H}^\epsilon_\alpha+\sum_{\beta=1}^N\left(K^\epsilon_{\alpha\beta}U^\epsilon_\beta+\sum_{i=1}^d\tilde{J}^\epsilon_{i\alpha\beta}\frac{\mathrm{d} U^\epsilon_\beta}{\mathrm{d} x_i}\right)=:(\mathcal{H}^\epsilon \vec{U}^\epsilon)_\alpha,\label{eq: Aeps-sys}\\
    (\mathcal{L} \vec{U}^\epsilon)_\alpha&:=\frac{\partial U^\epsilon_\alpha}{\partial t} + \sum_{\beta=1}^NL_{\alpha\beta}U^\epsilon_\beta = \sum_{\beta=1}^NG_{\alpha\beta}V^\epsilon_\beta,\label{eq: L-sys}
\end{align}
\end{subequations}
with the boundary conditions
\begin{subequations}
\begin{align}
    U_\alpha^\epsilon&=U_\alpha^*&\text{ in }\{0\}\times\Omega^\epsilon,\qquad\!\!\quad\label{eq: init-eps}\\
    V_\alpha^\epsilon& = 0&\text{ on }(0,T)\times\partial_{ext}\Omega^\epsilon,\label{eq: V=0}\\
    \frac{\mathrm{d} V^\epsilon_\alpha}{\mathrm{d}\nu_{\tens{D}^\epsilon}}&:=\sum_{i=1}^d\left(\sum_{j=1}^dE_{ij}^\epsilon\frac{\mathrm{d} V^\epsilon_\alpha}{\mathrm{d} x_j}+\sum_{\beta=1}^ND_{i\alpha\beta}^\epsilon V^\epsilon_\beta\right)n_i^\epsilon = 0&\text{ on }(0,T)\times\partial_{int}\Omega^\epsilon,\label{eq: Vhole=0}
\end{align}
\end{subequations}
for $\alpha\in\{1,\ldots,N\}$ or, in short-hand notation, this reads:
\begin{equation}\label{eq: neumann-operators}
\begin{dcases}
    \mathcal{A}^\epsilon \vec{V}^\epsilon\!:=\tens{M}^\epsilon\vec{V}^\epsilon-\nabla\cdot\left(\tens{E}^\epsilon\cdot\nabla\vec{V}^\epsilon+\tens{D}^\epsilon\vec{V}^\epsilon\right)\cr
    \qquad\quad\!= \vec{H}^\epsilon+\tens{K}^\epsilon\vec{U}^\epsilon+\tilde{\tens{J}}^\epsilon\cdot\nabla\vec{U}^\epsilon=:\mathcal{H}^\epsilon \vec{U}^\epsilon&\text{ in }\,(0,T)\times\Omega^\epsilon,\!\!\qquad\cr
    \mathcal{L} \vec{U}^\epsilon\!\!\!\!\quad:=\frac{\partial \vec{U}^\epsilon}{\partial t} + \tens{L}\vec{U}^\epsilon = \tens{G}\vec{V}^\epsilon&\text{ in }\,(0,T)\times\Omega^\epsilon,\!\!\qquad\cr
    \quad\vec{U}^\epsilon\,\,=\vec{U}^*&\text{ in }\,\quad\{0\}\times\Omega^\epsilon,\!\!\qquad\cr
    \quad\vec{V}^\epsilon \,\,= 0&\text{ on }(0,T)\times\partial_{ext}\Omega^\epsilon,\cr
    \frac{\mathrm{d} \vec{V}^\epsilon}{\mathrm{d}\nu_{\tens{D}^\epsilon}}\!\!\!\!\quad= \left(\tens{E}^\epsilon\cdot\nabla\vec{V}^\epsilon+\tens{D}^\epsilon\vec{V}^\epsilon\right)\cdot\vec{n}^\epsilon=0&\text{ on }(0,T)\times\partial_{int}\Omega^\epsilon.
\end{dcases}
\end{equation}
\subsection{Assumptions}
Consider the following technical requirements for the coefficients arising in the Neumann problem (\ref{eq: Aeps-sys}) - (\ref{eq: Vhole=0}).
\begin{itemize}
    \item[(A1)] For all $\alpha,\beta\in\{1,\ldots,N\}$ and for all $i,j\in\{1,\ldots,d\}$, we assume:
    \begin{equation}\label{eq: regularity1}
    \begin{array}{rcl}
     M_{\alpha\beta},H_\alpha,K_{\alpha\beta},J_{i\alpha\beta}&\in& L^\infty(\mathbf{R}_+;W^{2,\infty}(\Omega;C^2_\#(Y^*))),\cr
     E_{ij},D_{i\alpha\beta}&\in& L^\infty(\mathbf{R}_+;W^{3,\infty}(\Omega;C^3_\#(Y^*))),\cr
     L_{\alpha\beta},G_{\alpha\beta}&\in& L^\infty(\mathbf{R}_+;W^{4,\infty}(\Omega)),\cr
     U^*_\alpha&\in& W^{4,\infty}(\Omega),
    \end{array}
    \end{equation}
    with $\tilde{\tens{J}}^\epsilon = \epsilon \tens{J}^\epsilon$; see Remark \ref{r: A1} further on.
    \item[(A2)] The tensors $\tens{M}$ and $\tens{E}$ have a linear sum decomposition\footnote{For real symmetric matrices $\tens{M}$ and $\tens{E}$, the finite dimensional version of the spectral theorem states that they are diagonalizable by orthogonal matrices. Since $\tens{M}$ acts on the variable space $\mathbf{R}^N$, while $\tens{E}$ acts on the spatial space $\mathbf{R}^d$, one can simultaneously diagonalize both real symmetric matrices. For general real matrices $\tens{M}$ and $\tens{E}$ the linear sum decomposition in symmetric and skew-symmetric matrices allows for a diagonalization of the symmetric part. The orthogonal matrix transformations necessary to diagonalize the symmetric part does not modify the regularity of the domain $\Omega$, of the perforated periodic cell $Y^*$ or of the coefficients of $\tens{D}$, $\vec{H}$, $\tens{K}$, $\tens{J}$, $\tens{L}$, or $\tens{G}$. Hence, we are allowed to assume a linear sum decomposition of $\tens{M}$ and $\tens{E}$ in a diagonal and a skew-symmetric matrix.} with a skew-symmetric matrix and a diagonal matrix with the diagonal elements of $\tens{M}$ and $\tens{E}$ denoted by $M_\alpha, E_i\in L^\infty(\mathbf{R}_+\times\Omega;C_\#(Y^*))$, respectively, satisfying $M_\alpha>0$, $E_i>0$ and $1/M_\alpha,1/E_i\in L^\infty(\mathbf{R}_+\times\Omega\times Y^*)$.
    \item[(A3)] The inequality
    \begin{equation}
        \|D_{i\beta\alpha}^\epsilon\|_{L^\infty(\mathbf{R}_+\times\Omega^\epsilon;C_\#(Y^*))}^2<\frac{4m_\alpha e_i}{dN^2}\label{H: eq:  ineq}
    \end{equation}
    holds with
    \begin{equation}\label{eq: A3}
        \frac{1}{m_\alpha} = \left\|\frac{1}{M_\alpha}\right\|_{L^\infty(\mathbf{R}_+\times\Omega\times Y^*)}\quad\text{ and }\quad\frac{1}{e_i} = \left\|\frac{1}{E_i}\right\|_{L^\infty(\mathbf{R}_+\times\Omega\times Y^*)}
    \end{equation}
    for all $\alpha,\beta\in\{1,\ldots,N\}$, for all $i\in\{1,\ldots,d\}$, and for all $\epsilon\in(0,\epsilon_0)$.
    \item[(A4)] The perforation holes do not intersect the boundary of $\Omega$: \begin{equation*}\partial\mathcal{T}^\epsilon\cap\partial\Omega = \emptyset\text{ for a given sequence }\epsilon\in(0,\epsilon_0).
    \end{equation*}
    \end{itemize}
\begin{remarkArt} \label{r: A1}
    The dependence $\tens{J}^\epsilon=\epsilon\tens{J}^\epsilon$ was chosen to simplify both existence and uniqueness results and arguments for bounding certain terms. The case $\tens{J}^\epsilon = \tens{J}^\epsilon$ can be treated with the proofs outlined in this paper if additional cell functions are introduced and special inequalities similar to the Poincar\'{e}-Wirtinger inequality are used. See (\ref{eq: cellV1}) onward in Section \ref{s: upscaling} for the introduction of cell functions.
\end{remarkArt}
\begin{remarkArt}\label{r: A3}
    Satisfying inequality (\ref{H: eq:  ineq}) implies that the same inequality is satisfied for the $Y^*$-averaged functions $\overline{D_{i\beta\alpha}^\epsilon}$, $\overline{M_{\beta\alpha}^\epsilon}$, and $\overline{\tens{E}^\epsilon_{ij}}$ in $L^\infty(\mathbf{R}_+\times\Omega)$, where we used the following notion of $Y^*$-averaged functions
\begin{equation}\label{eq: average}
    \overline{f}(t,\vec{x}) = \frac{1}{|Y|}\int_{Y^*}f(t,\vec{x},\vec{y})\mathrm{d}\vec{y}.
\end{equation}
\end{remarkArt}
\begin{remarkArt}\label{r: A4}
Assumption (A4) implies the following identities for the given sequence $\epsilon\in(0,\epsilon_0)$:
\begin{equation}\label{eq: domainbounds}
\partial_{int}\Omega^\epsilon = \partial\mathcal{T}^\epsilon\cap\Omega,\quad \partial_{ext}\Omega^\epsilon = \partial\Omega.
\end{equation}
Without (A4) perforations would intersect $\partial\Omega$. One must then decide which parts of the boundary of the intersected cell $Y^*$ satisfies which boundary condition: (\ref{eq: V=0}) or (\ref{eq: Vhole=0}). This leads to non-trivial situations, that ultimately affects the corrector estimates in non-trivial ways.
\end{remarkArt}
\begin{theorem}\label{t: unexNeu}Under assumptions (A1)-(A4), there exist a solution pair\\ $(\vec{U}^\epsilon,\vec{V}^\epsilon) \in H^1((0,T)\times\Omega^\epsilon)^N\times L^\infty((0,T);\mathbb{V}_\epsilon\cap H^2(\Omega^\epsilon))^N$ satisfying the\\ Neumann problem (\ref{eq: Aeps-sys})-(\ref{eq: Vhole=0}).
\end{theorem}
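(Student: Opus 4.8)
The plan is to turn the spatial--temporal decoupling itself into a fixed-point scheme, working at a fixed $\epsilon\in(0,\epsilon_0)$ (no $\epsilon$-uniformity is claimed here, and by (A4) the set $\Omega^\epsilon$ is a fixed bounded domain whose boundary consists of the $C^2$-pieces $\partial\Omega=\partial_{ext}\Omega^\epsilon$ and $\partial\mathcal{T}^\epsilon=\partial_{int}\Omega^\epsilon$, so the classical $L^2$- and $H^2$-theory for second order elliptic systems is available on $\Omega^\epsilon$ without modification). \emph{Elliptic solve.} Given $\vec{U}\in C([0,T];H^1(\Omega^\epsilon))^N$, I would solve for a.e.\ $t$ the boundary value problem $\mathcal{A}^\epsilon\vec{V}(t)=\vec{H}^\epsilon(t)+\tens{K}^\epsilon(t)\vec{U}(t)+\tilde{\tens{J}}^\epsilon(t)\cdot\nabla\vec{U}(t)$ subject to (\ref{eq: V=0})--(\ref{eq: Vhole=0}) in weak form: testing with $\vec{\varphi}\in\mathbb{V}_\epsilon^N$ and integrating the divergence term by parts, the $\partial_{ext}\Omega^\epsilon$-contribution vanishes because $\vec{\varphi}=0$ there, while the $\partial_{int}\Omega^\epsilon$-contribution vanishes by the conormal condition (\ref{eq: Vhole=0}) (which is thus natural and need not be imposed explicitly). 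The bilinear form on $\mathbb{V}_\epsilon^N$ is bounded by (A1); for coercivity the skew-symmetric parts of $\tens{M}^\epsilon$ and $\tens{E}^\epsilon$ drop out of the quadratic form, the diagonal parts contribute $\sum_\alpha m_\alpha\|V_\alpha\|_{L^2}^2+\sum_i e_i\|\partial_i\vec{V}\|_{L^2}^2$ by (A2), and the drift term $\int_{\Omega^\epsilon}\tens{D}^\epsilon\vec{V}:\nabla\vec{V}$ is absorbed by Cauchy--Schwarz and then Young's inequality with a parameter $\delta$; assumption (A3) with $m_\alpha,e_i$ as in (\ref{eq: A3}) is precisely the statement that $\delta$ can be chosen so that, after summing over the $dN^2$ index triples, both the coefficient of $\|\partial_i\vec{V}\|_{L^2}^2$ and that of $\|V_\alpha\|_{L^2}^2$ stay strictly positive. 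Lax--Milgram then gives a unique $\vec{V}(t)\in\mathbb{V}_\epsilon^N$ (a genuine Hilbert space by Remark \ref{r: poincare}), and elliptic regularity on the $C^2$-domain $\Omega^\epsilon$ with the coefficient regularity of (A1) upgrades this to $\vec{V}(t)\in H^2(\Omega^\epsilon)^N$, with a bound linear in $\vec{U}(t)$; as the data are $L^\infty$ in $t$, one gets $\vec{V}\in L^\infty((0,T);(\mathbb{V}_\epsilon\cap H^2(\Omega^\epsilon))^N)$.

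\emph{ODE solve.} Given such a $\vec{V}$, I would solve the linear system $\partial_t\vec{U}+\tens{L}\vec{U}=\tens{G}\vec{V}$, $\vec{U}(0)=\vec{U}^*$, for a.e.\ fixed $\vec{x}\in\Omega^\epsilon$: since $\tens{L},\tens{G}\in L^\infty(\mathbf{R}_+;W^{4,\infty}(\Omega))$, Carath\'eodory theory (equivalently, the matrix variation-of-constants formula) gives a unique $\vec{U}\in C([0,T];L^2(\Omega^\epsilon))^N$ with $\partial_t\vec{U}=-\tens{L}\vec{U}+\tens{G}\vec{V}\in L^\infty((0,T);H^1(\Omega^\epsilon))^N$; differentiating the variation-of-constants representation in $\vec{x}$, using $\vec{U}^*\in W^{4,\infty}(\Omega)$ and $\nabla\vec{V}\in L^\infty((0,T);H^1(\Omega^\epsilon))$, yields $\nabla\vec{U}\in L^\infty((0,T);L^2(\Omega^\epsilon))$, so altogether $\vec{U}\in C([0,T];H^1(\Omega^\epsilon))^N\cap H^1((0,T)\times\Omega^\epsilon)^N$.

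\emph{Fixed point.} The composition $\Phi$ of these two maps sends $C([0,T];H^1(\Omega^\epsilon))^N$ into itself and is affine; estimating the difference of two images through the (instantaneous) elliptic solve and the (time-integrating) ODE solve gives a Volterra bound $\|\Phi\vec{U}_1-\Phi\vec{U}_2\|_{C([0,t];H^1)}\le C\int_0^t\|\vec{U}_1-\vec{U}_2\|_{C([0,s];H^1)}\,\mathrm{d}s$, so iterating $\Phi$ --- equivalently, replacing the norm by a Bielecki-weighted one $\sup_t e^{-\lambda t}\|\cdot\|_{H^1}$ with $\lambda$ large --- makes $\Phi$ a contraction on all of $(0,T)$. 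Its unique fixed point $\vec{U}^\epsilon$, together with the associated elliptic solution $\vec{V}^\epsilon$, is the desired solution pair, with the regularity claimed in the theorem already collected along the way.

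I expect the only real obstacle to be the coercivity bookkeeping in the elliptic solve: pinning down the Young parameter $\delta$ so that the combinatorial factor $dN^2$ and the constants $m_\alpha$, $e_i$ make (A3) close the estimate, while checking that the skew-symmetric parts of $\tens{M}^\epsilon$, $\tens{E}^\epsilon$ and the conormal boundary term are genuinely harmless. Everything else is routine, provided the carrier space is $C([0,T];H^1(\Omega^\epsilon))^N$ rather than an $L^2$-based space, so that the coupling term $\tilde{\tens{J}}^\epsilon\cdot\nabla\vec{U}^\epsilon=\epsilon\,\tens{J}^\epsilon\cdot\nabla\vec{U}^\epsilon$ --- which feeds one spatial derivative of $\vec{U}^\epsilon$ into the right-hand side of (\ref{eq: neumann-operators}) --- stays under control.
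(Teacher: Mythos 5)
Your proof is correct, but it takes a genuinely different route from the paper's. The paper proves Theorem~\ref{t: unexNeu} by the Rothe method: time is discretized, and at each time slice the (now decoupled) equations are solved by Lax--Milgram, with $\Delta t$-uniform bounds obtained from a discrete Gronwall inequality; piecewise-linear interpolants of the slices then converge to a weak solution. You instead keep time continuous and turn the spatial--temporal decoupling directly into a Picard/Banach fixed-point scheme on $C([0,T];H^1(\Omega^\epsilon))^N$: an instantaneous elliptic solve for $\vec{V}$ by Lax--Milgram (with the coercivity bookkeeping via (A2)--(A3) identical to the paper's Lemma~\ref{H: l: : apriori}), followed by a Volterra-type ODE solve for $\vec{U}$, and contraction in a Bielecki-weighted norm playing the role the discrete Gronwall estimate plays in Rothe. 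Both arguments are standard and yield the same regularity; yours is somewhat cleaner for a linear problem and gives existence and uniqueness in one stroke, while the Rothe method is more readily transplanted to nonlinear or time-dependent-operator settings (and the paper relies on it again in Remark~\ref{r: measurability} to establish measurability of the solution in $t$). One point you should state rather than assume: since the coefficients are only $L^\infty$ in $t$ by (A1), the map $t\mapsto\vec{V}(t)$ produced by the elliptic solve is not continuous in $t$, only (strongly) measurable; this follows from separability of the target Hilbert space and the Pettis theorem once you know the solution operator $\vec{U}(t)\mapsto\vec{V}(t)$ is a bounded linear map depending measurably on $t$, but it is the analogue of the measurability issue the paper delegates to the Rothe construction and deserves a sentence.
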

\begin{proof}
For $\tens{K}^\epsilon = \tens{M}^\epsilon\tens{G}^{-1}\tens{L}$, $\tens{J}^\epsilon=\tens{0}$ and $d=1$ the result follows by Theorem 1 in \cite{Vromans2017CASA}.
For non-perforated domains the result follows by either Theorem 1 in \cite{Vromans2018CASA}
or Theorem 7 in Chapter 4 of \cite{VromansLIC}.\\
For perforated domains, the result follows similarly. An outline of the proof is as follows. First, time-discretization is applied such that $\mathcal{A}^\epsilon\vec{V}^\epsilon$ at $t=k\tens{Z} t$ equals $\mathcal{H}^\epsilon \vec{U}^\epsilon$ at $t= (k-1)\tens{Z} t$ and $\mathcal{L}\vec{U}^\epsilon$ at $t=k\tens{Z} t$ equals $\tens{G}\vec{V}^\epsilon$ at $t= (k-1)\tens{Z} t$. This is an application of the Rothe method. Under assumptions (A1)-(A4), testing $\mathcal{A}^\epsilon\vec{V}^\epsilon$ with a function $\phi$ yields a continuous and coercive bilinear form on $H^1(\Omega^\epsilon)^N$, while testing $\mathcal{L}\vec{U}^\epsilon$ with a function $\psi$ yields a continuous and coercive bilinear form on $L^2(\Omega^\epsilon)^N$. Hence, Lax-Milgram leads to the existence of a solution at each time slice $t = k\tens{Z} t$.\\
Choosing the right functions for $\phi$ and $\psi$ and using a discrete version of Gronwall's inequality we obtain upper bounds of $\vec{U}^\epsilon$ and $\vec{V}^\epsilon$ independent of $\tens{Z} t$. Linearly interpolating the time slices, we find that the $\tens{Z} t$-independent time slices guarantee the existence of continuous weak limits. Due to sufficient regularity, we even obtain strong convergence and existence of boundary traces. Then the continuous weak limits are actually weak solutions of our Neumann problem (\ref{eq: Aeps-sys})-(\ref{eq: Vhole=0}). The uniqueness follows by the linearity of our Neumann problem (\ref{eq: Aeps-sys})-(\ref{eq: Vhole=0}).
\qed\end{proof}
\section{Main results}
\label{sec: main}
Two special length scales are involved in the Neumann problem (\ref{eq: Aeps-sys})-(\ref{eq: Vhole=0}): The variable $\vec{x}$ is the ``macroscopic'' scale, while $\vec{x}/\epsilon$ represents the ``microscopic'' scale. This leads to a double dependence of parameter functions (and, hence, of the solutions to the model equations), on both the macroscale and the microscale. For example, if $\vec{x}\in\Omega^\epsilon$, by the definition of $\Omega^\epsilon$, there exists $g\in G_\gamma$ such that $\vec{x}/\epsilon = g(\vec{y})$ with $\vec{y}\in Y^*$. This suggests that we look for a formal asymptotic expansion of the form
\begin{subequations}
\begin{align}
    \vec{V}^\epsilon(t,\vec{x}) &=& \vec{V}^0\left(t,\vec{x},\frac{\vec{x}}{\epsilon}\right)+\epsilon \vec{V}^1\left(t,\vec{x},\frac{\vec{x}}{\epsilon}\right)+\epsilon^2 \vec{V}^2\left(t,\vec{x},\frac{\vec{x}}{\epsilon}\right)+\cdots,\label{eq: V-expansion1}\\
    \vec{U}^\epsilon(t,\vec{x}) &=& \vec{U}^0\left(t,\vec{x},\frac{\vec{x}}{\epsilon}\right)+\epsilon \vec{U}^1\left(t,\vec{x},\frac{\vec{x}}{\epsilon}\right)+\epsilon^2 \vec{U}^2\left(t,\vec{x},\frac{\vec{x}}{\epsilon}\right)+\cdots\label{eq: U-expansion1}
    \end{align}
\end{subequations}
with $\vec{V}^j(t,\vec{x},\vec{y})$, $\vec{U}^j(t,\vec{x},\vec{y})$ defined for $t\in\mathbf{R}_+$, $\vec{x}\in\Omega^\epsilon$ and $\vec{y}\in Y^*$ and $Y^*$-periodic (i.e. $\vec{V}^j$, $\vec{U}^j$ are periodic with respect to $G_\gamma^{\epsilon}$).
\begin{theorem}\label{t: two-scale} Let assumptions (A1)-(A4) hold. For all $T\in\mathbf{R}_+$ there exist a unique pair $(\vec{U}^\epsilon,\vec{V}^\epsilon)\in H^1((0,T)\times\Omega^\epsilon)^N\times L^\infty((0,T);\mathbb{V}_\epsilon)^N$ satisfying the Neumann problem (\ref{eq: Aeps-sys})-(\ref{eq: Vhole=0}).
Moreover, for $\epsilon\downarrow0$
\begin{subequations}
\begin{align}
    \hat{\vec{U}}^\epsilon\overset{2}{\longrightarrow}\vec{U}^0&\text{ in }H^1((0,T)\times\Omega)^N\text{ and}\label{eq: uconverge1}\\
    \hat{\vec{V}}^\epsilon\overset{2}{\longrightarrow}\vec{V}^0&\text{ in }L^\infty((0,T);H^1_0(\Omega))^N.\label{eq: vconverge1}
\end{align}
\end{subequations}
This implies
\begin{subequations}
\begin{align}
    \hat{\vec{U}}^\epsilon\rightharpoonup \vec{U}^0&\text{ in }H^1((0,T)\times\Omega)^N\text{ and}\label{eq: uconverge2}\\
    \hat{\vec{V}}^\epsilon\rightharpoonup \vec{V}^0&\text{ in }L^\infty((0,T);H^1_0(\Omega))^N\label{eq: vconverge2}
\end{align}
\end{subequations}
for $\epsilon\downarrow0$.\end{theorem}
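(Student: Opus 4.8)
The plan is to establish Theorem \ref{t: two-scale} by combining the already-known well-posedness result (Theorem \ref{t: unexNeu}) with compactness and two-scale convergence machinery. Existence and uniqueness of $(\vec{U}^\epsilon,\vec{V}^\epsilon)$ is immediate from Theorem \ref{t: unexNeu}, so the real work is the passage to the limit. First I would derive $\epsilon$-uniform \emph{a priori} estimates: test the elliptic equation (\ref{eq: Aeps-sys}) with $\vec{V}^\epsilon$ itself, using assumption (A2) (positivity of the diagonal parts $M_\alpha,E_i$) and (A3) (smallness of $\tens{D}^\epsilon$) together with Young's inequality to absorb the drift term $\tens{D}^\epsilon\vec{V}^\epsilon$, thereby obtaining $\|\vec{V}^\epsilon\|_{L^\infty((0,T);\mathbb{V}_\epsilon)}\leq C$ in terms of $\|\vec{H}^\epsilon\|$, $\|\vec{U}^\epsilon\|$ and $\|\nabla\vec{U}^\epsilon\|$ (the latter comes with a factor $\epsilon$ through $\tilde{\tens{J}}^\epsilon=\epsilon\tens{J}^\epsilon$, which is exactly why assumption (A1)/Remark \ref{r: A1} is set up that way). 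Then test the ODE (\ref{eq: L-sys}) with $\vec{U}^\epsilon$ and with $\partial_t\vec{U}^\epsilon$, and apply Gronwall's inequality to close the bound on $\|\vec{U}^\epsilon\|_{H^1((0,T)\times\Omega^\epsilon)}$ in terms of $\|\vec{V}^\epsilon\|$ and the data $\vec{U}^*$; feeding this back gives a coupled estimate that closes because of the $\epsilon$ gain on the $\nabla\vec{U}^\epsilon$ term.

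Next I would transfer these bounds to the fixed domain $\Omega$ via the extension operator $\mathcal{P}^\epsilon$ of Theorem \ref{t: extension} and Corollary \ref{c: extension}, so that $\hat{\vec{V}}^\epsilon$ is bounded in $L^\infty((0,T);H^1_0(\Omega))^N$ and $\hat{\vec{U}}^\epsilon$ in $H^1((0,T)\times\Omega)^N$, uniformly in $\epsilon$ (for $\vec{U}^\epsilon$ one extends on each time slice, using that the operator is time-independent). By the standard two-scale compactness results collected in Appendix \ref{a: 2-scale}, one then extracts a subsequence along which $\hat{\vec{U}}^\epsilon$ and $\hat{\vec{V}}^\epsilon$, together with $\nabla\hat{\vec{U}}^\epsilon$, $\nabla\hat{\vec{V}}^\epsilon$ and $\partial_t\hat{\vec{U}}^\epsilon$, two-scale converge to limits of the form $\vec{U}^0(t,\vec{x})$, $\vec{V}^0(t,\vec{x})$ plus first-order correctors $\nabla_{\vec{y}}\vec{U}^1$, $\nabla_{\vec{y}}\vec{V}^1$ supported on $Y^*$. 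Passing to the two-scale limit in the weak formulations of (\ref{eq: Aeps-sys})-(\ref{eq: Vhole=0}) — using oscillating test functions $\phi(t,\vec{x})+\epsilon\psi(t,\vec{x},\vec{x}/\epsilon)$ — yields the two-scale (upscaled) system whose full derivation is the content of Section \ref{s: upscaling}; here I would simply invoke that the limit pair solves that system and that, because the homogenized problem is itself uniquely solvable (again by the structural assumptions (A1)-(A4) carried over to the $Y^*$-averaged coefficients, cf. Remark \ref{r: A3}), the whole sequence — not just a subsequence — converges. The strong two-scale convergence statements (\ref{eq: uconverge1})-(\ref{eq: vconverge1}) then follow from an energy-convergence argument: one shows $\limsup_\epsilon$ of the relevant $\epsilon$-energies equals the energy of the two-scale limit, which upgrades weak two-scale convergence to strong; for $\hat{\vec{U}}^\epsilon$ the $H^1$-strong convergence uses the ODE structure, integrating (\ref{eq: L-sys}) in time and exploiting strong convergence of $\vec{V}^\epsilon$.

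Finally, the implications (\ref{eq: uconverge2})-(\ref{eq: vconverge2}) are automatic, since two-scale convergence of a sequence whose limit is independent of $\vec{y}$ implies weak convergence in the corresponding Bochner space (this is part of the basic theory recalled in Appendix \ref{a: 2-scale}); alternatively weak convergence follows directly from the uniform bounds plus uniqueness of the limit.

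I expect the main obstacle to be the closing of the coupled \emph{a priori} estimates: the elliptic equation controls $\vec{V}^\epsilon$ in terms of $\vec{U}^\epsilon$ and $\nabla\vec{U}^\epsilon$, while the ODE controls $\vec{U}^\epsilon$ (including $\partial_t\vec{U}^\epsilon$ and $\nabla\vec{U}^\epsilon$) in terms of $\vec{V}^\epsilon$, so one must verify that the feedback loop does not blow up — this is where the $\epsilon$-scaling $\tilde{\tens{J}}^\epsilon=\epsilon\tens{J}^\epsilon$, the coercivity from (A2), and the smallness condition (A3) must be combined just right, and where one must be careful that all constants (in particular the Poincaré constant from Remark \ref{r: poincare} and the extension constant from Theorem \ref{t: extension}) are genuinely $\epsilon$-independent. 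A secondary technical point is justifying that the extension operator commutes suitably with the time derivative so that the $H^1$-in-time-and-space bound on $\hat{\vec{U}}^\epsilon$ is legitimate, and that traces on $\partial_{int}\Omega^\epsilon$ behave well enough for the boundary term in (\ref{eq: Vhole=0}) to vanish in the two-scale limit.
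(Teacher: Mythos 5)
Your overall architecture matches the paper's: existence and uniqueness from Theorem \ref{t: unexNeu}; $\epsilon$-uniform bounds by testing the elliptic equation with $\vec{V}^\epsilon$ (using (A2), (A3), Young) and the ODE with $\vec{U}^\epsilon$, $\partial_t\vec{U}^\epsilon$ and their gradients, closed by Gronwall (this is Section \ref{H: sec: 4}, Lemma \ref{H: l: : apriori} and Theorem \ref{H: t: existuniq}); extension to $\Omega$ via $\mathcal{P}^\epsilon$; two-scale compactness (Lemma \ref{H: l: : two-scale-conv}); passage to the two-scale limit with oscillating test functions (Theorem \ref{H: t: upscale}); and identification of the two-scale limit with $(\vec{U}^0,\vec{V}^0)$ plus a uniqueness argument to upgrade subsequential to full-sequence convergence (Theorem \ref{t: it-is-u0}). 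So the route is essentially the paper's.

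One genuine misconception should be corrected. You describe (\ref{eq: uconverge1})--(\ref{eq: vconverge1}) as ``strong two-scale convergence statements'' and propose to obtain them by an energy-convergence argument ($\limsup$ of $\epsilon$-energies equalling the limit energy). This is not what is claimed, and would in fact fail for the gradients. The symbol $\overset{2}{\longrightarrow}$ in this paper is the usual (weak-type) two-scale convergence of Definition \ref{d: two-scale}; the theorem asserts only this, and then notes it implies ordinary weak convergence --- which, as you observe, is automatic when the limit is $\vec{y}$-independent. No energy upgrade is needed, and none is possible for $\nabla\hat{\vec{U}}^\epsilon$ or $\nabla\hat{\vec{V}}^\epsilon$ in the way you suggest: their two-scale limits are $\nabla\vec{U}^0+\nabla_{\vec{y}}\mathcal{U}$ and $\nabla\vec{V}^0+\nabla_{\vec{y}}\mathcal{V}$ with nontrivial oscillating correctors, so the $L^2(\Omega)$-norms of $\nabla\hat{\vec{U}}^\epsilon$ do \emph{not} converge to $\|\nabla\vec{U}^0\|_{L^2(\Omega)}$ in general. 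You should simply drop that step. Two smaller points: (i) the feedback loop in your a priori estimate closes because the elliptic equation gives an \emph{algebraic} (pointwise-in-time) bound for $\vec{V}^\epsilon$ while the ODE contributes only through Gronwall, not primarily because of the $\epsilon$-factor in $\tilde{\tens{J}}^\epsilon=\epsilon\tens{J}^\epsilon$; that scaling is mainly there to avoid introducing additional cell functions (Remark \ref{r: A1}). (ii) When you ``simply invoke'' uniqueness of the homogenized problem, note that this is not free: the paper must establish coercivity of the effective tensor via the representation $\tens{E}^*=\frac{1}{|Y|}\int_{Y^*}(\tens{1}+\nabla_{\vec{y}}\vec{W})^\top\tens{E}(\tens{1}+\nabla_{\vec{y}}\vec{W})\,\mathrm{d}\vec{y}$ and then runs a Gronwall argument to prove $(\vec{u},\vec{v})=(\vec{U}^0,\vec{V}^0)$; the $Y^*$-averaged coefficients inheriting (A3) (Remark \ref{r: A3}) is part of, but not the whole of, that verification.
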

\begin{proof}
See Section \ref{s: upscaling} for the full details and \cite{Vromans2018CASA}
for a short proof of the two-scale convergence for a non-perforated setting.
\qed\end{proof}
Additionally, we are interested in deriving the speed of convergence of the formal asymptotic expansion. Boundary effects are expected to occur due to intersection of the external boundary with the perforated periodic cells. Hence, a cut-off function is introduced to remove this part from the analysis.\\
Let $M_\epsilon$ be the cut-off function defined by
\begin{equation}\label{eq: cutoff}
    \begin{dcases}
    M_\epsilon\in\mathcal{D}(\Omega),\cr
    M_\epsilon = 0 &\text{ if }\mathrm{dist}(\vec{x},\partial\Omega)\leq\,\,\,\epsilon\,\mathrm{diam}(Y),\cr
    M_\epsilon = 1 &\text{ if }\mathrm{dist}(\vec{x},\partial\Omega)\geq2\epsilon\,\mathrm{diam}(Y),\cr
    \epsilon\left|\frac{\mathrm{d} M_\epsilon}{\mathrm{d} x_i}\right|\leq C&i\in\{1,\ldots,d\}.
    \end{dcases}
\end{equation}
We refer to
\begin{subequations}
\begin{align}
\vec{\Phi}^\epsilon&=\vec{V}^\epsilon-\vec{V}^0-M_\epsilon(\epsilon \vec{V}^1+\epsilon^2 \vec{V}^2),\label{eq: t: PHI}\\
\vec{\Psi}^\epsilon&=\vec{U}^\epsilon-\vec{U}^0-M_\epsilon(\epsilon \vec{U}^1+\epsilon^2 \vec{U}^2)\label{eq: t: PSI}
\end{align}
\end{subequations}
as error functions. Now, we are able to state our convergence speed result.
\begin{theorem}\label{t: corrector}Let assumptions (A1)-(A4) hold. There exist constants $l\geq0$, $\kappa\geq0$, $\tilde{\kappa}\geq0$, $\lambda\geq0$ and $\mu\geq0$ such that
\begin{subequations}
    \begin{align}
        \|\vec{\Phi}^\epsilon\|_{\mathbb{V}_\epsilon^N}(t)&\!\leq\!\mathcal{C}(\epsilon,t) ,\!\label{eq: t: corrector1}\\
        \|\vec{\Psi}^\epsilon\|_{H^1(\Omega^\epsilon)^N}(t)&\!\leq\!\mathcal{C}(\epsilon,t)\sqrt{t_le^{lt}}\label{eq: t: corrector2}
    \end{align}
\end{subequations}
with
\begin{equation}
    \mathcal{C}(\epsilon,t) = C(\epsilon^{\frac{1}{2}}\!+\!\epsilon^{\frac{3}{2}})\left[1\!+\!\epsilon^{\frac{1}{2}}(1\!+\!\tilde{\kappa}e^{\lambda t})(1\!+\!\kappa(1\!+\! t_le^{lt}))\right]\!\exp\!\left(\mu t_le^{lt}\right)\label{eq: bounddef}
\end{equation}
where $C$ is a constant independent of $\epsilon$ and $t$, and $t_l = \min\{1/l,t\}$.
\end{theorem}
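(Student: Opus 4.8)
The plan is to exploit the spatial–temporal decoupling underlying the whole paper and to close a coupled Gronwall argument between the two error functions. First I would insert the truncated expansions (cut off by $M_\epsilon$) into $\mathcal{A}^\epsilon$, $\mathcal{H}^\epsilon$ and into $\mathcal{L}$, and use the cell problems and the homogenized system derived in Section~\ref{s: upscaling} to cancel the leading orders, thereby obtaining residual equations for $\vec{\Phi}^\epsilon$ and $\vec{\Psi}^\epsilon$. On the temporal side this is clean: since $\mathcal{L}=\partial_t+\tens{L}$ carries no spatial derivatives and $\tens{L}$, $\tens{G}$, $M_\epsilon$ are $\vec{y}$-independent (and $M_\epsilon$ is also $t$-independent), the corrector identities $\mathcal{L}\vec{U}^k=\tens{G}\vec{V}^k$, $k=0,1,2$, give the \emph{exact} relation
\[
\mathcal{L}\vec{\Psi}^\epsilon=\tens{G}\vec{\Phi}^\epsilon\quad\text{in }(0,T)\times\Omega^\epsilon,\qquad\vec{\Psi}^\epsilon(0,\cdot)=0,
\]
the initial value being zero because $\vec{U}^\epsilon(0,\cdot)=\vec{U}^*$ is $\epsilon$-independent, which forces $\vec{U}^0(0,\cdot)=\vec{U}^*$ and $\vec{U}^1(0,\cdot)=\vec{U}^2(0,\cdot)=0$. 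On the elliptic side, using that $\mathcal{H}^\epsilon$ is affine in $\vec{U}$, I get
\[
\mathcal{A}^\epsilon\vec{\Phi}^\epsilon=\vec{R}^\epsilon+\tens{K}^\epsilon\vec{\Psi}^\epsilon+\epsilon\tens{J}^\epsilon\cdot\nabla\vec{\Psi}^\epsilon\quad\text{in }(0,T)\times\Omega^\epsilon,
\]
with $\vec{\Phi}^\epsilon=0$ on $\partial_{ext}\Omega^\epsilon$ and an inhomogeneous conormal condition on $\partial_{int}\Omega^\epsilon$; here the corrector residual splits as $\vec{R}^\epsilon=\vec{r}_0^\epsilon+\nabla\cdot\vec{r}_1^\epsilon$ with an $O(\epsilon)$ interior part (the $\epsilon^1$-remainder of the expansion and the terms where $\nabla$ hits $M_\epsilon$ once) plus a part supported in the strip $\{\mathrm{dist}(\cdot,\partial\Omega)\le 2\epsilon\,\mathrm{diam}(Y)\}$, of size $O(1)$ from $\epsilon\vec{V}^1$ and $O(\epsilon)$ from $\epsilon^2\vec{V}^2$; the high regularity imposed in (A1) is precisely what makes the correctors smooth enough for these manipulations.

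Next I would run the elliptic energy estimate: test the weak form of the $\vec{\Phi}^\epsilon$-equation with $\vec{\Phi}^\epsilon\in\mathbb{V}_\epsilon^N$. Because the $\partial_{int}$-condition is natural, the bilinear form at $(\vec{\Phi}^\epsilon,\vec{\Phi}^\epsilon)$ gives $\int_{\Omega^\epsilon}\tens{M}^\epsilon\vec{\Phi}^\epsilon\cdot\vec{\Phi}^\epsilon+\int_{\Omega^\epsilon}(\tens{E}^\epsilon\cdot\nabla\vec{\Phi}^\epsilon+\tens{D}^\epsilon\vec{\Phi}^\epsilon)\cdot\nabla\vec{\Phi}^\epsilon$; by (A2) the skew parts of $\tens{M}^\epsilon$, $\tens{E}^\epsilon$ drop out and the diagonal parts are bounded below, while by (A3) the $\tens{D}^\epsilon$-cross term is absorbed with Young's inequality — the threshold $4m_\alpha e_i/(dN^2)$ is exactly what this absorption requires. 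With the $\epsilon$-uniform Poincaré inequality of Remark~\ref{r: poincare} one gets coercivity, and estimating the right-hand side — $\langle\vec{R}^\epsilon,\vec{\Phi}^\epsilon\rangle$ by $(\|\vec{r}_0^\epsilon\|_{L^2(\Omega^\epsilon)}+\|\vec{r}_1^\epsilon\|_{L^2(\Omega^\epsilon)})\,\|\vec{\Phi}^\epsilon\|_{\mathbb{V}_\epsilon}$ (the boundary strip has measure $O(\epsilon)$, turning its $O(1)$ and $O(\epsilon)$ integrands into $O(\epsilon^{1/2})$ and $O(\epsilon^{3/2})$), the $\tens{K}^\epsilon\vec{\Psi}^\epsilon$ and $\epsilon\tens{J}^\epsilon\cdot\nabla\vec{\Psi}^\epsilon$ terms by $C\|\vec{\Psi}^\epsilon\|_{H^1(\Omega^\epsilon)}\|\vec{\Phi}^\epsilon\|_{\mathbb{V}_\epsilon}$, and the $\partial_{int}$-residual via the cell-problem Neumann data together with the scaled trace inequality $\|w\|_{L^2(\partial_{int}\Omega^\epsilon)}^2\le C(\epsilon^{-1}\|w\|_{L^2(\Omega^\epsilon)}^2+\epsilon\|\nabla w\|_{L^2(\Omega^\epsilon)}^2)$, again at order $\epsilon^{1/2}+\epsilon^{3/2}$ — division by $\|\vec{\Phi}^\epsilon\|_{\mathbb{V}_\epsilon}$ yields
\[
\|\vec{\Phi}^\epsilon\|_{\mathbb{V}_\epsilon}(t)\le C(\epsilon^{1/2}+\epsilon^{3/2})\big(1+g(t)\big)+C\,\|\vec{\Psi}^\epsilon\|_{H^1(\Omega^\epsilon)}(t),
\]
where $g(t)$ collects the relevant corrector norms at time $t$.

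Then I would close the loop on the temporal side. Differentiating $\mathcal{L}\vec{\Psi}^\epsilon=\tens{G}\vec{\Phi}^\epsilon$ in $\vec{x}$, testing with $\vec{\Psi}^\epsilon$ and with $\nabla\vec{\Psi}^\epsilon$, and using (A1) and Poincaré for $\vec{\Phi}^\epsilon$, I obtain $\tfrac{\mathrm d}{\mathrm dt}\|\vec{\Psi}^\epsilon\|_{H^1(\Omega^\epsilon)}^2\le l_0\|\vec{\Psi}^\epsilon\|_{H^1(\Omega^\epsilon)}^2+C\|\vec{\Phi}^\epsilon\|_{\mathbb{V}_\epsilon}^2$ with $\vec{\Psi}^\epsilon(0)=0$. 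Inserting the elliptic bound (and absorbing its $C\|\vec{\Psi}^\epsilon\|_{H^1}^2$ into the linear coefficient, which redefines the exponential rate $l$), integrating, and using $\int_0^t e^{l(t-s)}\,\mathrm ds=(e^{lt}-1)/l\le t_le^{lt}$ with $t_l=\min\{1/l,t\}$ gives a bound of the form (\ref{eq: t: corrector2}); feeding this back into the elliptic bound gives (\ref{eq: t: corrector1}). The precise exponents and the form (\ref{eq: bounddef}) of $\mathcal{C}(\epsilon,t)$ are obtained by carrying the corrector norms and the stability constants through the two Gronwall steps: $l$ comes from the positive part of $\tens{L}$ together with $\|\nabla\tens{L}\|_\infty$, $\|\tens{G}\|_\infty$ and the elliptic coercivity constant; $\lambda$ and $\tilde{\kappa}$ from the at most exponential growth in $t$ of $\vec{U}^0$, $\vec{V}^0$ (hence of $g$); $\kappa$ from the corrector and cell-function bounds; $\mu$ from the iterated Gronwall; the explicit expressions are the content of Appendix~\ref{app: constants}.

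The hard part will be the elliptic step, and within it the two perforation-specific contributions — the $\nabla M_\epsilon$ boundary-strip terms near $\partial\Omega$ and the inhomogeneous residual on the perforation boundaries $\partial_{int}\Omega^\epsilon$ — which must be shown to enter at precisely the orders $\epsilon^{1/2}$ and $\epsilon^{3/2}$; this is where the scaled trace inequality and the exact Neumann data of the cell functions are essential, and where (A4) keeps the geometry of the intersected cells trivial. A secondary point needing care is confirming that the upscaling of Section~\ref{s: upscaling} really forces $\vec{U}^1(0,\cdot)=\vec{U}^2(0,\cdot)=0$, so that $\vec{\Psi}^\epsilon(0)=0$; the remainder is bookkeeping of the asymptotic expansion plus two Gronwall inequalities.
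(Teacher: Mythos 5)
Your proposal follows essentially the same strategy as the paper: insert the cut-off expansions, use the cell problems and the upscaled system to cancel the leading orders, obtain a residual elliptic problem for $\vec{\Phi}^\epsilon$ and the exact temporal identity $\mathcal{L}\vec{\Psi}^\epsilon = \tens{G}\vec{\Phi}^\epsilon$ with $\vec{\Psi}^\epsilon(0)=\vec 0$, test the elliptic problem with $\vec{\Phi}^\epsilon$ to get coercivity from (A2)--(A3), control the boundary-strip and perforation-trace contributions at order $\epsilon^{1/2}+\epsilon^{3/2}$, and close a coupled Gronwall argument. You also correctly identify that $\vec{U}^j(0,\cdot)=\vec 0$ for $j\geq1$ is built into system (\ref{eq: L1sys}), and that the trace inequality (Lemma \ref{l: lions}) together with the explicit cell-problem Neumann data gives the $\epsilon^{1/2}$ scaling. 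So the proposal is correct and captures the essential ideas.

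The one place where you diverge, and it is worth naming, is the order in which the two Gronwall loops are closed. You first obtain a pointwise bound $\|\vec{\Phi}^\epsilon\|_{\mathbb{V}_\epsilon^N}\leq\text{(corrector residual)}+C\|\vec{\Psi}^\epsilon\|_{H^1(\Omega^\epsilon)^N}$, substitute it into $\frac{\mathrm{d}}{\mathrm{d}t}\|\vec{\Psi}^\epsilon\|^2_{H^1}\leq l\|\vec{\Psi}^\epsilon\|^2_{H^1}+C\|\vec{\Phi}^\epsilon\|^2_{\mathbb{V}_\epsilon}$, absorb the $\|\vec{\Psi}^\epsilon\|^2$ term into the rate (replacing $l$ by $l'=l+C'$), and then integrate. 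The paper does the opposite: it first expresses $\|\vec{\Psi}^\epsilon\|^2_{L^2}$ as an integral $\int_0^t e^{l(t-s)}G_N\|\vec{\Phi}^\epsilon\|^2(s)\,\mathrm{d}s$ via (\ref{eq: Psi-bound}), substitutes this into the bound $m\|\vec{\Phi}^\epsilon\|_{\mathbb{V}_\epsilon}\leq\kappa\|\vec{\Psi}^\epsilon\|_{L^2}+\mathcal{B}(\epsilon,t)$, applies Young twice and then a Gronwall inequality with the time-dependent kernel $e^{l(t-s)}$, arriving at (\ref{eq: phiboundbeta}) with the $\exp(\mu\,t_l e^{lt})$ factor. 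Your closure gives a simple exponential growth $\sqrt{t_{l'}e^{l't}}$, which is actually a tighter bound in $t$ than the paper's; since the theorem is existential in the constants $l,\kappa,\tilde\kappa,\lambda,\mu\geq 0$ (in particular $\mu=0$ is admissible), your tighter estimate still proves the stated theorem with a different choice of $l$. In short: same backbone, different bookkeeping in the final step, with yours arguably cleaner; the paper's route has the minor advantage of keeping the explicit forms (\ref{eq: app lbound})--(\ref{eq: app mubound}) in Appendix~\ref{app: constants} structurally uncoupled, at the price of the doubly-exponential factor.

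Two small precision points to keep in mind if you write this out in full. First, when you isolate the right-hand side as $\vec R^\epsilon+\tens{K}^\epsilon\vec{\Psi}^\epsilon+\epsilon\tens J^\epsilon\cdot\nabla\vec{\Psi}^\epsilon$, the terms $\tens K^\epsilon M_\epsilon(\epsilon\vec U^1+\epsilon^2\vec U^2)$ and $\epsilon\tens J^\epsilon\cdot\nabla\big(\vec U^0+M_\epsilon(\epsilon\vec U^1+\epsilon^2\vec U^2)\big)$ must be pushed into $\vec R^\epsilon$, and their $t$-dependence is controlled by the Gronwall bounds (\ref{eq: u1})--(\ref{eq: u2}) for $\vec U^1,\vec U^2$; the paper handles this via (\ref{eq: Ucorrbound})--(\ref{eq: udiff}). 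Second, to turn the $L^2$-Gronwall for $\vec{\Psi}^\epsilon$ into the $H^1$-bound in (\ref{eq: t: corrector2}), you do need to differentiate $\mathcal{L}\vec{\Psi}^\epsilon=\tens G\vec{\Phi}^\epsilon$ in $\vec x$ and test with $\nabla\vec{\Psi}^\epsilon$, which uses the $W^{1,\infty}$-regularity of $\tens L$ and $\tens G$ from (A1) exactly as in (\ref{eq: L2bound}); you flag this correctly.
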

\begin{remarkArt}\label{r: finiteT}
The upper bounds in (\ref{eq: t: corrector1}) and (\ref{eq: t: corrector2}) are $\mathcal{O}(\epsilon^{\frac{1}{2}})$ for $\epsilon$-independent finite time intervals. We call this type of bounds corrector estimates.
\end{remarkArt}
The corrector estimate of $\vec{\Phi}^\epsilon$ in Theorem \ref{t: corrector} becomes that of the classic linear elliptic system for $\tens{K}=\tens{0}$ and $\tens{J}=0$. This is because $\tens{K}=\tens{0}$ and $\tens{J}=0$ imply $\tilde{\kappa}=\kappa = \mu = 0$, see Appendix \ref{app: constants}. See \cite{CioranescuStJeanPaulin1998} for the classical approach to corrector estimates of elliptic systems in perforated domains and \cite{Meshkova-Suslina2016} for a spectral approach in non-perforated domains.
\begin{corollary}\label{c: corrector}
Under the assumptions of Theorem \ref{t: corrector},
\begin{subequations}
    \begin{align}
        \|\hat{\vec{V}}^\epsilon\!-\!\vec{V}^0\|_{H^1_0(\Omega)^N}(t)&\!\leq\!\mathcal{C}(\epsilon,t),\label{eq: c: corrector1}\\
        \!\!\!\!\!\!\!\!\!\|\hat{\vec{U}}^\epsilon\!-\!\vec{U}^0\|_{H^1(\Omega)^N}(t)&\!\leq\!\mathcal{C}(\epsilon,t)\sqrt{t_le^{lt}}\label{eq: c: corrector2}
    \end{align}
\end{subequations}
hold, where $C$ is a constant independent of $\epsilon$ and $t$.
\end{corollary}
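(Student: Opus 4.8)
The plan is to deduce the corollary from Theorem \ref{t: corrector} by the triangle inequality, passing from $\Omega^\epsilon$ to $\Omega$ with the extension operator of Theorem \ref{t: extension} and estimating the cut-off correctors by hand. First I would observe that, thanks to (\ref{eq: V=0}), the fact that $\vec{V}^0\in L^\infty((0,T);H^1_0(\Omega))^N$ vanishes on $\partial\Omega$, and the compact support of $M_\epsilon$ in $\Omega$ from (\ref{eq: cutoff}), the error function $\vec{\Phi}^\epsilon(t,\cdot)$ belongs to $\mathbb{V}_\epsilon^N$, so Corollary \ref{c: extension} gives $\|\mathcal{P}^\epsilon\vec{\Phi}^\epsilon\|_{H^1_0(\Omega)^N}(t)\le C\|\vec{\Phi}^\epsilon\|_{\mathbb{V}_\epsilon^N}(t)\le C\,\mathcal{C}(\epsilon,t)$ by (\ref{eq: t: corrector1}). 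Since $\mathcal{P}^\epsilon$ is linear and $\hat{\vec{V}}^\epsilon=\mathcal{P}^\epsilon\vec{V}^\epsilon$, one has on $\Omega$
\[
\hat{\vec{V}}^\epsilon-\vec{V}^0=\mathcal{P}^\epsilon\vec{\Phi}^\epsilon+\big(\mathcal{P}^\epsilon[\vec{V}^0|_{\Omega^\epsilon}]-\vec{V}^0\big)+\mathcal{P}^\epsilon\big[(M_\epsilon(\epsilon\vec{V}^1+\epsilon^2\vec{V}^2))|_{\Omega^\epsilon}\big],
\]
so, using (\ref{eq: t: corrector1}) for the first term, it remains to bound the last two.

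The ``commutator'' $\mathcal{P}^\epsilon[\vec{V}^0|_{\Omega^\epsilon}]-\vec{V}^0$ is $O(\epsilon)$ in $H^1_0(\Omega)^N$ by the cellwise-local character of $\mathcal{P}^\epsilon$ and the regularity of $\vec{V}^0$ inherited from (A1). For the last term, Theorem \ref{t: extension} reduces it to $C\,\|M_\epsilon(\epsilon\vec{V}^1+\epsilon^2\vec{V}^2)\|_{H^1(\Omega^\epsilon)^N}$, which I would estimate via $\nabla[\vec{V}^k(t,\vec{x},\vec{x}/\epsilon)]=(\nabla_{\vec{x}}\vec{V}^k)(t,\vec{x},\vec{x}/\epsilon)+\epsilon^{-1}(\nabla_{\vec{y}}\vec{V}^k)(t,\vec{x},\vec{x}/\epsilon)$ together with (\ref{eq: cutoff}): the $L^2$-part is $O(\epsilon^k)$; the slow-gradient term $\epsilon^kM_\epsilon(\nabla_{\vec{x}}\vec{V}^k)$ is $O(\epsilon^k)$; the fast-gradient term $\epsilon^{k-1}M_\epsilon(\nabla_{\vec{y}}\vec{V}^k)$ enters only for $k=2$ (the first-order profile $\vec{V}^1$ coming out of the decoupled cell problem of Section \ref{s: upscaling} is $\vec{y}$-free), hence is $O(\epsilon)$; and $\epsilon^k(\nabla M_\epsilon)\vec{V}^k$, supported on the strip $\{\epsilon\,\mathrm{diam}(Y)\le\mathrm{dist}(\vec{x},\partial\Omega)\le2\epsilon\,\mathrm{diam}(Y)\}$ of measure $O(\epsilon)$ on which $|\nabla M_\epsilon|\le C/\epsilon$, has $L^2$-norm $O(\epsilon^{k-1/2})$. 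Summing $k=1,2$ gives $\le C(\epsilon^{1/2}+\epsilon^{3/2})$, which is absorbed into the prefactor of $\mathcal{C}(\epsilon,t)$; this proves (\ref{eq: c: corrector1}).

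The proof of (\ref{eq: c: corrector2}) is the same with two adjustments for $\vec{U}$. Since $\mathcal{P}^\epsilon$ is built for $\mathbb{V}_\epsilon$ while $\vec{U}^\epsilon$ need not vanish on $\partial_{ext}\Omega^\epsilon$, I would either extend $\vec{U}^\epsilon$ by solving the pointwise-in-space ODE (\ref{eq: L-sys}) with the globally defined datum $\vec{U}^*$ and right-hand side $\tens{G}\hat{\vec{V}}^\epsilon$, transferring the $\vec{\Psi}^\epsilon$-bound by Duhamel and Gronwall -- which is what produces the factor $\sqrt{t_le^{lt}}$ already appearing in (\ref{eq: t: corrector2}) -- or invoke an $\epsilon$-uniform Sobolev extension for $\Omega^\epsilon$, available because by (A4) and the geometry $\Omega^\epsilon$ has the $\epsilon$-uniform Lipschitz property. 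The cut-off correctors $M_\epsilon(\epsilon\vec{U}^1+\epsilon^2\vec{U}^2)$ are handled exactly as for $\vec{V}$ and contribute at most $C(\epsilon^{1/2}+\epsilon^{3/2})\sqrt{t_le^{lt}}$, dominated by $\mathcal{C}(\epsilon,t)\sqrt{t_le^{lt}}$.

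I expect the main obstacle to be the bookkeeping of the corrector terms in the $H^1$-norm: getting the $O(\epsilon^{1/2})$ rate in the full $H^1$-norm (rather than only in $L^2$) hinges on the first-order profiles $\vec{V}^1$ and $\vec{U}^1$ being free of the fast variable $\vec{y}$, so that the fast-gradient contribution $\epsilon^{-1}\nabla_{\vec{y}}\vec{V}^1$ never appears; verifying this from the decoupled elliptic cell problem of Section \ref{s: upscaling} is the crux, and it is the feature distinguishing the present situation from classical elliptic homogenization, where the $H^1$-error does not vanish. The compensation of the blowing-up factor $\epsilon^{-1}$ in the $\nabla M_\epsilon$ boundary-layer term by the $O(\epsilon^{1/2})$ measure of the strip on which it is supported, and the extension step for $\vec{U}^\epsilon$, are the only other points requiring care, and both are routine.
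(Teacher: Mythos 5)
There is a genuine gap in your argument, and you have in fact put your finger on exactly where it is while asserting the wrong resolution. You claim that ``the first-order profile $\vec{V}^1$ coming out of the decoupled cell problem of Section \ref{s: upscaling} is $\vec{y}$-free,'' so that the fast-gradient term $\epsilon^{-1}\nabla_{\vec{y}}\vec{V}^1$ never appears. This is false: from (\ref{eq: cellV1}) the first corrector is $\vec{V}^1=\vec{W}\cdot\nabla_{\vec{x}}\vec{V}^0+\tens{Z}\vec{V}^0+\tilde{\vec{V}}^1$, where $\vec{W}$ and $\tens{Z}$ solve the cell problems (\ref{eq: cellWsys}) and (\ref{eq: cellDsys}) with right-hand sides $-\nabla_{\vec{y}}\cdot\tens{E}$ and $-\nabla_{\vec{y}}\cdot\tens{D}$; under assumption (A1) these data are generically $\vec{y}$-dependent and the cell functions are non-trivial (only the $\tilde{\vec{V}}^1$ part is $\vec{y}$-free). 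The spatial-temporal decoupling acts only on the ODE for $\vec{U}$; it does not alter the elliptic cell problems, and Lemma \ref{l: v0-yindep} establishes $\vec{y}$-independence of $\vec{V}^0$, not of $\vec{V}^1$. Consequently $\nabla\!\left[\epsilon M_\epsilon\vec{V}^1(t,\vec{x},\vec{x}/\epsilon)\right]$ contains the term $M_\epsilon(\nabla_{\vec{y}}\vec{V}^1)(t,\vec{x},\vec{x}/\epsilon)$, which is $\mathcal{O}(1)$ in $L^2(\Omega^\epsilon)$, not $\mathcal{O}(\epsilon^{1/2})$, and the estimate you claim for $\|M_\epsilon(\epsilon\vec{V}^1+\epsilon^2\vec{V}^2)\|_{H^1(\Omega^\epsilon)^N}$ does not hold. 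The same objection applies to $\vec{U}^1$, since $\mathcal{L}\vec{U}^1=\tens{G}\vec{V}^1$ with a $\vec{y}$-dependent right-hand side makes $\vec{U}^1$ $\vec{y}$-dependent as well.

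Your overall scaffolding (extend with $\mathcal{P}^\epsilon$, split off $\mathcal{P}^\epsilon\vec{\Phi}^\epsilon$ by linearity, control the commutator and the cut-off corrector by hand, handle the $\vec{U}$-part through the ODE and Gronwall) is the natural route and is likely what the authors have in mind, since they give no separate proof of the corollary. But the chain breaks precisely at the step you flagged as ``the crux'': the fast-gradient contribution of the corrector is not small in $H^1$, which is exactly the well-known obstruction in classical elliptic homogenization that you invoke as a contrast. As written, the triangle-inequality argument only yields (\ref{eq: c: corrector1})--(\ref{eq: c: corrector2}) with the $L^2(\Omega)^N$-norm on the left (where all corrector contributions, including $M_\epsilon\nabla_{\vec{y}}\vec{V}^1$, appear with an extra factor of $\epsilon$), or with the correctors retained on the left as in Theorem \ref{t: corrector}. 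To claim the stated $H^1$-rate you would need either an additional structural hypothesis forcing $\vec{W}=\tens{Z}=\tens{0}$ (e.g.\ $\tens{E},\tens{D}$ independent of $\vec{y}$), or a genuinely different argument that is not present in your proposal.
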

According to Remark \ref{r: finiteT}, $\epsilon$-independent finite time intervals yield $\mathcal{O}(\epsilon^{\frac{1}{2}})$ corrector estimates. Is it, then, possible to have a converging corrector estimate for diverging time intervals in the limit $\epsilon\downarrow0$? The next theorem answers this question positively.\\
\begin{theorem}\label{t: timeinterval}
If $l>0$, we introduce the rescaled time $\tau\ln\left(\frac{1}{\epsilon}\right)=\exp(lt)\geq1$ and $q\in(0,\frac{1}{2})$ independent of both $\epsilon$ and $t$ satisfying $0<\mu\tau/l<\frac{1}{2}-q$. Then, for $0<\epsilon< \exp(-\frac{2\mu}{(1-2q)l})$, we have the corrector bounds
\begin{subequations}
    \begin{align}
        \|\vec{\Phi}^\epsilon\|_{\mathbb{V}_\epsilon^N}(t)&=\mathcal{O}\left(\epsilon^{\frac{1}{2}-\frac{\mu}{l}\tau}\right)=o(1)=\omega\left(\epsilon^{\frac{1}{2}}\right),\label{eq: theocorrV1}\\
        \|\vec{\Psi}^\epsilon\|_{H^1(\Omega^\epsilon)^N}(t)&=\mathcal{O}\left(\epsilon^{\frac{1}{2}-\frac{\mu}{l}\tau}\right)\mathcal{O}\left(\frac{\epsilon^{-q}}{q}\right)=o(1)=\omega\left(\epsilon^{\frac{1}{2}}\right)\label{eq: theocorrU1}
    \end{align}
\end{subequations}
as $\epsilon\downarrow0$.\\
If $l=0$, we introduce the rescaled time $\tau\ln\left(\frac{1}{\epsilon}\right)=t\geq0$ and $p,q\in(0,\frac{1}{2})$ independent of both $\epsilon$ and $t$ satisfying $0<\max\{\mu\tau,(\lambda+\mu)\tau+p-\frac{1}{2}\}<\frac{1}{2}-q$. Then, for $0<\epsilon<1$, we have the corrector bounds
\begin{subequations}
    \begin{align}
        \|\vec{\Phi}^\epsilon\|_{\mathbb{V}_\epsilon^N}(t)&=\mathcal{O}\left(\epsilon^{\frac{1}{2}-\mu\tau}\right)+\mathcal{O}\left(\epsilon^{1-(\lambda+\mu)\tau}\right)\mathcal{O}\left(\frac{\epsilon^{-p}}{p}\right),\label{eq: theocorrV2}\\
        \|\vec{\Psi}^\epsilon\|_{H^1(\Omega^\epsilon)^N}(t)&=\left[\mathcal{O}\left(\epsilon^{\frac{1}{2}-\mu\tau}\right)+\mathcal{O}\left(\epsilon^{1-(\lambda+\mu)\tau}\right)\mathcal{O}\left(\frac{\epsilon^{-p}}{p}\right)\right]\mathcal{O}\left(\frac{\epsilon^{-q}}{q}\right)\label{eq: theocorrU2}
    \end{align}
\end{subequations}
as $\epsilon\downarrow0$. If, additionally, $\kappa=0$ holds, then the bounds change to
\begin{subequations}
    \begin{align}
        \|\vec{\Phi}^\epsilon\|_{\mathbb{V}_\epsilon^N}(t)&=\mathcal{O}\left(\epsilon^{\min\{\frac{1}{2},1-\lambda\tau\}}\right),\label{eq: theocorrV3}\\
        \|\vec{\Psi}^\epsilon\|_{H^1(\Omega^\epsilon)^N}(t)&=\mathcal{O}\left(\epsilon^{\min\{\frac{1}{2},1-\lambda\tau\}}\right)\mathcal{O}\left(\frac{\epsilon^{-q}}{q}\right).\label{eq: theocorrU3}
    \end{align}
\end{subequations}
\end{theorem}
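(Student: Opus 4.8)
The strategy is to start from the already-proven master estimate of Theorem~\ref{t: corrector}, namely $\|\vec{\Phi}^\epsilon\|_{\mathbb{V}_\epsilon^N}(t)\leq\mathcal{C}(\epsilon,t)$ and $\|\vec{\Psi}^\epsilon\|_{H^1(\Omega^\epsilon)^N}(t)\leq\mathcal{C}(\epsilon,t)\sqrt{t_le^{lt}}$ with $\mathcal{C}(\epsilon,t)$ as in~(\ref{eq: bounddef}), and to substitute the rescaled time into it so that the factors $e^{lt}$ (or $t$, when $l=0$) become powers of $\epsilon^{-1}$. The point is that every occurrence of $t$ in $\mathcal{C}(\epsilon,t)$ appears only through the combinations $t_le^{lt}$ and $e^{\lambda t}$, and $t_l=\min\{1/l,t\}\leq 1/l$ when $l>0$; so the only genuinely unbounded quantity is $e^{lt}$ itself, which the substitution $\exp(lt)=\tau\ln(1/\epsilon)$ turns into $\epsilon^{-\mu\tau/l\cdot(\text{const})}$-type factors via $\exp(\mu t_le^{lt})=\exp(\mu\tau\ln(1/\epsilon)/l)=\epsilon^{-\mu\tau/l}$. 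I would treat the three cases ($l>0$; $l=0$ general; $l=0$ with $\kappa=0$) in sequence, each being a bookkeeping exercise in collecting powers of $\epsilon$.

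\textbf{Case $l>0$.} Set $\exp(lt)=\tau\ln(1/\epsilon)$, so $t_le^{lt}=\tfrac1l\tau\ln(1/\epsilon)$ and $\exp(\mu t_le^{lt})=\epsilon^{-\mu\tau/l}$. Plug into~(\ref{eq: bounddef}): the prefactor $C(\epsilon^{1/2}+\epsilon^{3/2})$ contributes $\mathcal{O}(\epsilon^{1/2})$, the bracket $[1+\epsilon^{1/2}(1+\tilde\kappa e^{\lambda t})(1+\kappa(1+t_le^{lt}))]$ is $1+\mathcal{O}(\epsilon^{1/2}\cdot\text{polylog}(1/\epsilon))=\mathcal{O}(1)$ for $\epsilon$ small (here one needs that $e^{\lambda t}$, which equals $(\tau\ln(1/\epsilon))^{\lambda/l}$, is only polylogarithmic, hence dominated by $\epsilon^{-1/2}$ times anything), and the exponential gives $\epsilon^{-\mu\tau/l}$. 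Thus $\|\vec{\Phi}^\epsilon\|_{\mathbb{V}_\epsilon^N}(t)=\mathcal{O}(\epsilon^{1/2-\mu\tau/l})$; the condition $0<\mu\tau/l<\tfrac12-q$ forces the exponent into $(q,\tfrac12)$, which is $o(1)$ and $\omega(\epsilon^{1/2})$, and the restriction $\epsilon<\exp(-2\mu/((1-2q)l))$ is exactly what makes the small-$\epsilon$ regime start. For $\vec{\Psi}^\epsilon$ one multiplies by $\sqrt{t_le^{lt}}=\sqrt{\tfrac1l\tau\ln(1/\epsilon)}=\mathcal{O}(\epsilon^{-q}/q)$ after absorbing the logarithm into an arbitrarily small power (using $\ln(1/\epsilon)=\mathcal{O}(\epsilon^{-q}/q)$, which holds up to a $q$-dependent constant; this is where the explicit $q$ in the denominator comes from — it is the constant in $\sup_{s>0}s e^{-qs}=1/(eq)$).

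\textbf{Case $l=0$.} Now $t_le^{lt}=t$ and the substitution is $\tau\ln(1/\epsilon)=t$, so $\exp(\mu t)=\epsilon^{-\mu\tau}$ and $e^{\lambda t}=\epsilon^{-\lambda\tau}$. The prefactor $\epsilon^{1/2}$ times $\exp(\mu t)$ gives $\epsilon^{1/2-\mu\tau}$; the cross term $\epsilon^{1/2}\cdot\tilde\kappa e^{\lambda t}\cdot\kappa t\cdot\exp(\mu t)$ gives $\epsilon^{1/2-(\lambda+\mu)\tau}\cdot t=\epsilon^{1-(\lambda+\mu)\tau+\text{(absorb }t)}$, where the factor $t=\tau\ln(1/\epsilon)=\mathcal{O}(\epsilon^{-p}/p)$ is converted with a fresh exponent $p$; hence the sum $\mathcal{O}(\epsilon^{1/2-\mu\tau})+\mathcal{O}(\epsilon^{1-(\lambda+\mu)\tau})\mathcal{O}(\epsilon^{-p}/p)$, exactly~(\ref{eq: theocorrV2}). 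The $\vec{\Psi}^\epsilon$ bound carries the extra $\sqrt{t_le^{lt}}=\sqrt{t}=\mathcal{O}(\epsilon^{-q/2})\subset\mathcal{O}(\epsilon^{-q}/q)$ factor, giving~(\ref{eq: theocorrU2}). When $\kappa=0$ the bracket in~(\ref{eq: bounddef}) collapses to $1+\epsilon^{1/2}\tilde\kappa e^{\lambda t}=1+\tilde\kappa\epsilon^{1/2-\lambda\tau}$ and $\mu=0$ as well (by Appendix~\ref{app: constants}, $\kappa=0$ kills $\mu$), so $\mathcal{C}(\epsilon,t)=C(\epsilon^{1/2}+\epsilon^{3/2})(1+\tilde\kappa\epsilon^{1/2-\lambda\tau})=\mathcal{O}(\epsilon^{1/2})+\mathcal{O}(\epsilon^{1-\lambda\tau})=\mathcal{O}(\epsilon^{\min\{1/2,1-\lambda\tau\}})$, and~(\ref{eq: theocorrV3})–(\ref{eq: theocorrU3}) follow.

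\textbf{Main obstacle.} The only non-mechanical point is the conversion of logarithmic factors $\ln(1/\epsilon)$ (and their powers $e^{\lambda t}=(\ln(1/\epsilon))^{\lambda/l}$ in the $l>0$ case) into powers $\epsilon^{-q}/q$ or $\epsilon^{-p}/p$, and verifying that the stated sign conditions on $q,p,\mu\tau,\lambda\tau$ are precisely what keep all resulting exponents of $\epsilon$ strictly between $0$ and $\tfrac12$ — so that the bounds are simultaneously $o(1)$ (convergence on the diverging time interval) and $\omega(\epsilon^{1/2})$ (strictly worse than the finite-horizon rate, as must be the case). I would make the elementary inequality $s^a\le (a/(eq))^a\,\epsilon^{-q}$ for $s=\ln(1/\epsilon)$, $a\ge0$, $\epsilon\in(0,1)$ explicit once and cite it in each case; checking that the exponent arithmetic matches the theorem's constraints is then routine but must be done carefully for each of the five displayed bounds.
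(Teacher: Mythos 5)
Your proposal is correct and follows essentially the same route as the paper's (very terse) proof: insert the rescaled time into $\mathcal{C}(\epsilon,t)$ from Theorem~\ref{t: corrector}, use $t_l\le 1/l$ for $l>0$ (resp.\ $t_l=t$ for $l=0$), and convert the residual $\ln(1/\epsilon)$-factors into $\mathcal{O}(\epsilon^{-q}/q)$ (resp.\ $\mathcal{O}(\epsilon^{-p}/p)$) via the elementary bound $\sup_{\epsilon\in(0,1)}\epsilon^{q}\ln(1/\epsilon)=1/(qe)$, which is exactly the ``single maximal value'' observation the paper invokes. Your identification of $e^{\lambda t}=(\tau\ln(1/\epsilon))^{\lambda/l}$ as merely polylogarithmic in the $l>0$ case, and of $\kappa=0\Rightarrow\mu=0$ via Appendix~\ref{app: constants} in the last case, supplies the two pieces of bookkeeping the paper leaves implicit.
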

\begin{proof} Insert the definition of the rescaled time into (\ref{eq: t: corrector1}) and (\ref{eq: t: corrector2}), use $t_l = \min\{1/l,t\} = t$ for $l=0$ and $t_l\leq 1/l$ for $l>0$. Now one obtains the product $\epsilon^\tens{Z}\ln(1/\epsilon)$ at several locations, which has a single maximal value of $\frac{1}{\tens{Z} e}$ at $\ln\left(\frac{1}{\epsilon}\right) = \frac{1}{\tens{Z}}$. The minimum function is needed since $\mathcal{O}(\epsilon^{r})+\mathcal{O}(\epsilon^{s}) =\mathcal{O}(\epsilon^{\min\{r,s\}})$.
The small $o$ and small $\omega$ orders are upper and lower asymptotic convergence speeds, respectively, for $\epsilon\downarrow0$. The upper bound for $\epsilon$ is needed to guarantee that the interval for $\tau$ corresponds to $t\geq0$.
\qed
\end{proof}
Theorem \ref{t: timeinterval} indicates that convergence can be retained for certain diverging sequences of time-intervals. Consequently, appropriate rescalings of the time variable yield upscaled systems and convergence rates for systems with regularity conditions different from those in assumptions (A1) - (A3).\\
\begin{remarkArt}\label{r: rescale}
The tensors $\tens{L}$ and $\tens{G}$ are not dependent on $\epsilon$ nor are unbounded functions of $t$. If such a dependence or unbounded behaviour does exist, then bounds similar to those stated in Theorem \ref{t: corrector} are still valid in a new time-variable $s\in I\subset\mathbf{R}_+$ if an invertible $C^1$-map $f_\epsilon$ from $t\in\mathbf{R}_+$ to $s$ exists such that tensors
$(\tens{L}^\epsilon/f'_\epsilon)\circ f_\epsilon^{-1}$, $(\tens{G}^\epsilon/f'_\epsilon)\circ f_\epsilon^{-1}$, $\tens{M}^\epsilon\circ f_\epsilon^{-1}$, $\tens{E}^\epsilon\circ f_\epsilon^{-1}$, $\tens{D}^\epsilon\circ f_\epsilon^{-1}$, $\tens{H}^\epsilon\circ f_\epsilon^{-1}$, $\tens{K}^\epsilon\circ f_\epsilon^{-1}$, and $\tens{J}^\epsilon\circ f_\epsilon^{-1}$
satisfy (A1)-(A3).\\
Moreover, if $f_\epsilon(\mathbf{R}_+) = \mathbf{R}_+$ for $\epsilon>0$ small enough, then the bounds of Theorem \ref{t: timeinterval} are valid as well with $\tau$ defined in terms of $s$.
\end{remarkArt}

\section{Upscaling procedure}
\label{s: upscaling}
Upscaling of the Neumann problem (\ref{eq: Aeps-sys})-(\ref{eq: Vhole=0}) can be done by many methods, e.g. via asymptotic expansions or two-scale convergence in suitable function spaces. We proceed in four steps:
\begin{itemize}
    \item[1.] \textbf{Existence and uniqueness of $(\vec{U}^\epsilon,\vec{V}^\epsilon)$.}\\
    We rely on Theorem \ref{t: unexNeu}.
    \item[2.] \textbf{Obtain $\epsilon$-independent bounds for $(\vec{U}^\epsilon,\vec{V}^\epsilon)$.}\\
    See Section \ref{H: sec: 4}.
    \begin{itemize}
    \item[a.] Obtain \textit{a priori} estimates for $(\vec{U}^\epsilon,\vec{V}^\epsilon)$. See Lemma \ref{H: l: : apriori}.
    \item[b.] Obtain $\epsilon$-independent bounds for $(\vec{U}^\epsilon,\vec{V}^\epsilon)$. See Theorem \ref{H: t: existuniq}.
    \end{itemize}
    \item[3.] \textbf{Upscaling via two-scale convergence.}\\
    See Section \ref{H: sec: 5}.
    \begin{itemize}
    \item[a.] Two-scale limit of $(\vec{U}^\epsilon,\vec{V}^\epsilon)$ for $\epsilon\downarrow0$. See Lemma \ref{H: l: : two-scale-conv}.
    \item[b.] Two-scale limit of problem (\ref{eq: Aeps-sys})-(\ref{eq: Vhole=0}) for $\epsilon\!\,\!\,\!\downarrow\!\,\!\,\!0$. See Theorem \ref{H: t: upscale}.
    \end{itemize}
    \item[4.] \textbf{Upscaling via asymptotic expansions and relating to two-scale convergence.}\\ See Section \ref{sec: 4.3}.
    \begin{itemize}
    \item[a.] Expand (\ref{eq: Aeps-sys}) and $(\vec{U}^\epsilon,\vec{V}^\epsilon)$. See equations (\ref{eq: neumann-operator})-(\ref{eq: L1sys}).
    \item[b.] Obtain existence \& uniqueness of $\!(\vec{U}^0,\!\!\vec{V}^0)$. See Lemma \ref{l: simple} and Lemma \ref{l: v0-yindep}
    \item[c.] Obtain the defining system of $(\vec{U}^0,\vec{V}^0)$. See equations (\ref{eq: cellV1})-(\ref{eq: upscaledV}) and Lemma \ref{l: u0v0-sys}.
    \item[d.] Statement of the upscaled system. See Theorem \ref{t: it-is-u0}.
    \end{itemize}
\end{itemize}

\subsection{$\epsilon$-independent bounds for $(\vec{U}^\epsilon,\vec{V}^\epsilon)$}\label{H: sec: 4}
In this section, we show $\epsilon$-independent bounds for a weak solution $(\vec{U}^\epsilon,\vec{V}^\epsilon)$ to the Neumann problem (\ref{eq: Aeps-sys})-(\ref{eq: Vhole=0}). We define a weak solution to the Neumann problem (\ref{eq: Aeps-sys})-(\ref{eq: Vhole=0}) as a pair $(\vec{U}^\epsilon,\vec{V}^\epsilon)\in H^1((0,T)\times\Omega^\epsilon)^N\times L^\infty((0,T),\mathbb{V}_\epsilon)^N$ satisfying
\begin{equation*}
    \text{\textbf{(P$_w^\epsilon$)}}\qquad\qquad\begin{dcases}
    \int_{\Omega^\epsilon}\vec{\phi}^\top\left[\tens{M}^\epsilon\vec{V}^\epsilon-\vec{H}^\epsilon - \tens{K}^\epsilon\vec{U}^\epsilon-\tens{J}^\epsilon\cdot\nabla\vec{U}^\epsilon\right]&\cr
    \qquad\qquad+(\nabla\vec{\phi})^\top\cdot\left(\tens{E}^\epsilon\cdot\nabla\vec{V}^\epsilon+\tens{D}^\epsilon\vec{V}^\epsilon\right)\mathrm{d}\vec{x} = 0,\cr
    \int_{\Omega^\epsilon}\vec{\psi}^\top\left[\frac{\partial\vec{U}^\epsilon}{\partial t}+\tens{L}\vec{U}^\epsilon- \tens{G}\vec{V}^\epsilon\right]\mathrm{d}\vec{x} = 0,\cr
    \vec{U}^\epsilon(0,\vec{x})=\vec{U}^*(\vec{x}) \text{ for all }\vec{x}\in\overline{\Omega^\epsilon},
    \end{dcases}
\end{equation*}
for a.e. $t\in(0,T)$ and for all test-functions $\vec{\phi}\in \mathbb{V}_\epsilon^N$ and $\vec{\psi}\in L^2(\Omega^\epsilon)^N$.\\
The existence and uniqueness of solutions to system (P$_w^\epsilon$) can only hold when the parameters are well-balanced. The next lemma provides a set of parameters for which these parameters are well-balanced.
\begin{lemma}\label{H: l: : apriori} Assume assumptions (A1)-(A3) hold and we have $\epsilon\in(0,\epsilon_0)$ for $\epsilon_0>0$, then there exist positive constants $\tilde{m}_\alpha$, $\tilde{e}_i$, $\tilde{H}$, $\tilde{K}_\alpha$, $\tilde{J}_{i\alpha}$,  for $\alpha\in\{1,\ldots,N\}$ and $i\in\{1,\ldots,d\}$ such that the \textit{a priori} estimate
    \begin{multline}
        \sum_{\alpha=1}^N\tilde{m}_\alpha\|V^\epsilon_\alpha\|^2_{L^2(\Omega)}+\sum_{i=1}^d\sum_{\alpha=1}^N\tilde{e}_i\left\|\frac{\mathrm{d}V^\epsilon_\alpha}{\mathrm{d}x_i}\right\|^2_{L^2(\Omega)}\\
        \leq \tilde{H}+\sum_{\alpha=1}^N\tilde{K}_\alpha\|U^\epsilon_\alpha\|^2_{L^2(\Omega)}+\sum_{i=1}^d\sum_{\alpha=1}^N\tilde{J}_{i\alpha}\left\|\frac{\mathrm{d}U^\epsilon_\alpha}{\mathrm{d}x_i}\right\|^2_{L^2(\Omega)}\label{H: eq:  aprioriV}
    \end{multline}
    holds for a.e. $t\in(0,T)$.
\end{lemma}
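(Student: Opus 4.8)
The plan is to obtain the bound by a purely spatial (stationary) energy estimate at a frozen time: the space--time decoupling means that for a weak solution $(\vec{U}^\epsilon,\vec{V}^\epsilon)$ of $(\mathrm{P}^{\epsilon}_w)$ one may fix $t\in(0,T)$ at which $\vec{V}^\epsilon(t,\cdot)\in\mathbb{V}_\epsilon^N$ and the elliptic identity of $(\mathrm{P}^{\epsilon}_w)$ holds (which is the case for a.e.\ $t$, by the definition of weak solution) and simply test that identity with the admissible function $\vec{\phi}=\vec{V}^\epsilon(t,\cdot)$. No integration in time is carried out, so the constants produced will not depend on $t$. This yields, for such $t$,
\begin{multline*}
\int_{\Omega^\epsilon}\!\Big[(\vec{V}^\epsilon)^\top\tens{M}^\epsilon\vec{V}^\epsilon+(\nabla\vec{V}^\epsilon)^\top\!\cdot\tens{E}^\epsilon\!\cdot\nabla\vec{V}^\epsilon\Big]\,\mathrm{d}\vec{x}\\
=\int_{\Omega^\epsilon}\!\Big[(\vec{V}^\epsilon)^\top\big(\vec{H}^\epsilon+\tens{K}^\epsilon\vec{U}^\epsilon+\tens{J}^\epsilon\!\cdot\nabla\vec{U}^\epsilon\big)-(\nabla\vec{V}^\epsilon)^\top\!\cdot\tens{D}^\epsilon\vec{V}^\epsilon\Big]\,\mathrm{d}\vec{x}.
\end{multline*}
All $L^2$-norms below are taken over $\Omega^\epsilon$; extending $\vec{V}^\epsilon$ by zero into the perforations leaves the $L^2$-norm unchanged, so they may be read over $\Omega$ as in (\ref{H: eq:  aprioriV}).

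By assumption (A2) the skew-symmetric parts of $\tens{M}^\epsilon$ and $\tens{E}^\epsilon$ do not contribute to the two quadratic forms, so the left-hand side equals $\sum_\alpha\int_{\Omega^\epsilon}M_\alpha^\epsilon|V_\alpha^\epsilon|^2+\sum_{i,\alpha}\int_{\Omega^\epsilon}E_i^\epsilon(\mathrm{d}V_\alpha^\epsilon/\mathrm{d}x_i)^2$ and, by the positive lower bounds $m_\alpha,e_i$ supplied through (A3), dominates $\sum_\alpha m_\alpha\|V_\alpha^\epsilon\|_{L^2}^2+\sum_{i,\alpha}e_i\|\mathrm{d}V_\alpha^\epsilon/\mathrm{d}x_i\|_{L^2}^2$. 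On the right-hand side the terms coming from $\vec{H}^\epsilon$, $\tens{K}^\epsilon\vec{U}^\epsilon$ and $\tens{J}^\epsilon\!\cdot\nabla\vec{U}^\epsilon$ are handled by Cauchy--Schwarz followed by Young's inequality, using the $\epsilon$-uniform $L^\infty$-bounds on the coefficients from (A1) and $|\Omega^\epsilon|\le|\Omega|$; each of them is at most $\delta\sum_\alpha\|V_\alpha^\epsilon\|_{L^2}^2$ with $\delta>0$ free, plus a constant times, respectively, $1$, $\sum_\beta\|U_\beta^\epsilon\|_{L^2}^2$, and $\sum_{i,\beta}\|\mathrm{d}U_\beta^\epsilon/\mathrm{d}x_i\|_{L^2}^2$. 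Those constants furnish $\tilde{H}$, $\tilde{K}_\alpha$ and $\tilde{J}_{i\alpha}$.

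The delicate term is the cross term $-\int_{\Omega^\epsilon}(\nabla\vec{V}^\epsilon)^\top\!\cdot\tens{D}^\epsilon\vec{V}^\epsilon$. I would estimate it by Young's inequality applied separately on each index triple $(i,\alpha,\beta)$, with a weight slightly below $2e_i/N$; after summation the resulting coefficient of each $\|\mathrm{d}V_\alpha^\epsilon/\mathrm{d}x_i\|_{L^2}^2$ stays strictly below $e_i$, while the resulting coefficient of each $\|V_\alpha^\epsilon\|_{L^2}^2$ is controlled by precisely inequality (A3), $\|D^\epsilon_{i\beta\alpha}\|_{L^\infty}^2<4m_\alpha e_i/(dN^2)$, and stays strictly below $m_\alpha$. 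Hence one may absorb into the left-hand side all the $\|V_\alpha^\epsilon\|_{L^2}^2$- and $\|\mathrm{d}V_\alpha^\epsilon/\mathrm{d}x_i\|_{L^2}^2$-terms that appeared on the right (the strict inequalities leave positive room, and $\delta$ can be chosen small enough), leaving positive constants $\tilde{m}_\alpha,\tilde{e}_i$ on the left and exactly $\tilde{H}+\sum_\alpha\tilde{K}_\alpha\|U_\alpha^\epsilon\|_{L^2}^2+\sum_{i,\alpha}\tilde{J}_{i\alpha}\|\mathrm{d}U_\alpha^\epsilon/\mathrm{d}x_i\|_{L^2}^2$ on the right, i.e.\ (\ref{H: eq:  aprioriV}). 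Every constant depends only on $L^\infty$-norms of the data, on $|\Omega|$, and on the $\epsilon$-uniform quantities of (A3), so it is independent of $\epsilon$. The main obstacle, and the only place the hypotheses are genuinely used, is this cross term: it has to be split so that its gradient part is absorbed by the $\tens{E}$-coercivity and its zeroth-order part by the $\tens{M}$-coercivity at once, and it is exactly the constant $4m_\alpha e_i/(dN^2)$ in (A3) — the factor $d$ counting spatial directions, $N^2$ counting component pairs — that makes this simultaneous absorption possible.
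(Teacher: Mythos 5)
Your argument is essentially the paper's own: both test the elliptic equation of $(\mathrm{P}^\epsilon_w)$ with $\vec{\phi}=\vec{V}^\epsilon$ at fixed $t$, dispose of the $\vec{H}$-, $\tens{K}\vec{U}$- and $\tens{J}\cdot\nabla\vec{U}$-products by Cauchy--Schwarz plus Young with small free parameters, and identify the $\tens{D}$-cross term as the one place where (A3) is genuinely needed so that the contributions to the $\vec{V}$- and $\nabla\vec{V}$-coefficients can be absorbed simultaneously into the $\tens{M}$- and $\tens{E}$-coercivity. The paper phrases that absorption through a family of Young weights $\eta_{i\beta\alpha}$ required to lie in the interval $\bigl(dN\tilde D_{i\beta\alpha}/(2m_\alpha),\,2e_i/(N\tilde D_{i\beta\alpha})\bigr)$, which is nonempty precisely when (A3) holds; your choice of a weight just below $2e_i/N$ is one admissible instance of that.
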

\begin{proof}
We test the first equation of \textbf{(P$_w^\epsilon$)} with $\vec{\phi} = \vec{V}^\epsilon$ and apply Young's inequality wherever a product is not a square. A non-square product containing both  $\vec{U}^\epsilon$ and $\nabla\vec{V}^\epsilon$ can only be found in the $\tens{D}$-term. Hence, Young's inequality allows all other non-square product terms to have a negligible effect on the coercivity constants $m_\alpha$ and $e_i$, while affecting $\tilde{H}$, $\tilde{K}_\alpha$, $\tilde{J}_{i\alpha}$. Therefore, we only need to enforce two inequalities to prove the lemma by guaranteeing coercivity, i.e.
\begin{subequations}
\begin{align}
    e_i-\sum_{\alpha=1}^N\frac{\eta_{i\beta\alpha}}{2}\tilde{D}_{i\beta\alpha}\geq\tilde{e}_i>0&\;\text{for }\beta\in\{1,\ldots,N\},\,i\in\{1,\ldots,d\},\label{eq: e-coerc}\\
    m_\alpha-\sum_{i=1}^d\sum_{\beta=1}^N\frac{\tilde{D}_{i\beta\alpha}}{2\eta_{i\beta\alpha}}\geq\tilde{m}_\alpha>0&\;\text{for }\alpha\in\{1,\ldots,N\},\label{eq: m-coerc}
\end{align}
\end{subequations}
where $\tilde{D}_{i\beta\alpha}=\|D_{i\beta\alpha}\|_{L^\infty(\mathbf{R}_+\times\Omega;C_\#(Y^*))}$.
We can choose $\eta_{i\beta\alpha}>0$ satisfying
\begin{equation}
    \frac{dN\tilde{D}_{i\beta\alpha}}{2m_\alpha}<\eta_{i\beta\alpha}<\frac{2e_i}{N\tilde{D}_{i\beta\alpha}},\label{eq: coerc-bound}
\end{equation}
if inequality (\ref{H: eq:  ineq}) in assumption (A3) is satisfied. For the exact definition of the constants $\tilde{m}_\alpha$, $\tilde{e}_i$, $\tilde{H}$, $\tilde{K}_\alpha$, $\tilde{J}_{i\alpha}$, see equations (\ref{eq: app tildembound})-(\ref{eq: app tildehibound}) in Appendix \ref{app: constants}.
\qed\end{proof}
\begin{theorem}\label{H: t: existuniq} Assume (A1)-(A3) to hold, then there exist positive constants $C$, $\tilde{\kappa}$ and $\lambda$ independent of $\epsilon$ such that
\begin{equation}\label{eq: timebound}
    \|\vec{U}^\epsilon\|_{H^1(\Omega^\epsilon)^N}(t)\leq Ce^{\lambda t},\qquad \|\vec{V}^\epsilon\|_{\mathbb{V}_\epsilon^N}(t)\leq C(1+\tilde{\kappa}e^{\lambda t})
\end{equation}
   hold for $t\geq0$.
\end{theorem}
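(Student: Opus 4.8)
The plan is to turn the \textit{a priori} bound of Lemma~\ref{H: l: : apriori}, which controls $\vec V^\epsilon$ in terms of $\vec U^\epsilon$, into a closed Gronwall inequality for $\vec U^\epsilon$ alone, exploiting that \eqref{eq: L-sys} is, for each fixed $\vec x$, an ordinary differential equation in $t$ with no spatial derivatives of $\vec U^\epsilon$. First I would test the weak form of \eqref{eq: L-sys} with $\vec\psi=\vec U^\epsilon$ over $\Omega^\epsilon$ to get $\tfrac12\tfrac{\mathrm d}{\mathrm dt}\|\vec U^\epsilon\|_{L^2(\Omega^\epsilon)^N}^2=\int_{\Omega^\epsilon}(\vec U^\epsilon)^\top(\tens G\vec V^\epsilon-\tens L\vec U^\epsilon)$; since $\tens L,\tens G\in L^\infty(\mathbf R_+;W^{4,\infty}(\Omega))$ by (A1) and carry no $\epsilon$-dependence, Cauchy--Schwarz and Young's inequality yield $\tfrac{\mathrm d}{\mathrm dt}\|\vec U^\epsilon\|_{L^2(\Omega^\epsilon)^N}^2\le C(\|\vec U^\epsilon\|_{L^2(\Omega^\epsilon)^N}^2+\|\vec V^\epsilon\|_{L^2(\Omega^\epsilon)^N}^2)$ with $C$ independent of $\epsilon$.

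\textbf{Controlling $\nabla\vec U^\epsilon$.} Next I would differentiate \eqref{eq: L-sys} in $x_i$; because no spatial derivatives of $\vec U^\epsilon$ appear, this produces no boundary terms, and the regularity $\vec V^\epsilon\in L^\infty((0,T);\mathbb V_\epsilon\cap H^2(\Omega^\epsilon))^N$ from Theorem~\ref{t: unexNeu}, together with $\tens L,\tens G\in L^\infty(\mathbf R_+;W^{4,\infty}(\Omega))$, shows $\partial_t\nabla\vec U^\epsilon=\nabla(\tens G\vec V^\epsilon)-\nabla(\tens L\vec U^\epsilon)\in L^2((0,T)\times\Omega^\epsilon)$, so $\nabla\vec U^\epsilon\in H^1(0,T;L^2(\Omega^\epsilon))$ and the energy identity below is legitimate (alternatively, one can run Steps~1--2 on the Rothe time-discretisation used to prove Theorem~\ref{t: unexNeu} and pass to the limit). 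Pairing the differentiated equation with $\partial_{x_i}\vec U^\epsilon$, summing over $i$, and using Young's inequality to absorb the lower-order terms $(\partial_{x_i}\tens L)\vec U^\epsilon$ and $(\partial_{x_i}\tens G)\vec V^\epsilon$, I would reach $\tfrac{\mathrm d}{\mathrm dt}\|\nabla\vec U^\epsilon\|_{L^2(\Omega^\epsilon)^{dN}}^2\le C(\|\vec U^\epsilon\|_{H^1(\Omega^\epsilon)^N}^2+\|\vec V^\epsilon\|_{H^1(\Omega^\epsilon)^N}^2)$; adding this to the previous estimate and writing $f(t):=\|\vec U^\epsilon\|_{H^1(\Omega^\epsilon)^N}^2(t)$ gives $f'(t)\le C\big(f(t)+\|\vec V^\epsilon\|_{H^1(\Omega^\epsilon)^N}^2(t)\big)$ for a.e.\ $t$.

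\textbf{Closing the loop.} Then I would invoke Lemma~\ref{H: l: : apriori}, which bounds $\|\nabla\vec V^\epsilon\|_{L^2(\Omega^\epsilon)^{dN}}^2$ and $\|\vec V^\epsilon\|_{L^2(\Omega^\epsilon)^N}^2$ by $C_0(1+f(t))$ with $C_0$ independent of $\epsilon$ (its constants $\tilde m_\alpha,\tilde e_i,\tilde H,\tilde K_\alpha,\tilde J_{i\alpha}$ being $\epsilon$-independent), and Remark~\ref{r: poincare}, by which $\|\vec V^\epsilon\|_{H^1(\Omega^\epsilon)^N}^2\le C\|\nabla\vec V^\epsilon\|_{L^2(\Omega^\epsilon)^{dN}}^2$ uniformly in $\epsilon$; this turns the inequality into $f'\le\Lambda f+B$ with $\Lambda,B>0$ independent of $\epsilon$. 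Since $f(0)=\|\vec U^*\|_{H^1(\Omega^\epsilon)^N}^2\le\|\vec U^*\|_{H^1(\Omega)^N}^2<\infty$ is $\epsilon$-independent by (A1), Gronwall's inequality yields $f(t)\le\big(f(0)+B/\Lambda\big)e^{\Lambda t}$, i.e.\ $\|\vec U^\epsilon\|_{H^1(\Omega^\epsilon)^N}(t)\le Ce^{\lambda t}$ with $\lambda=\Lambda/2$ and $C$ independent of $\epsilon$. Substituting back, $\|\vec V^\epsilon\|_{\mathbb V_\epsilon^N}^2(t)=\|\nabla\vec V^\epsilon\|_{L^2(\Omega^\epsilon)^{dN}}^2(t)\le C_0\big(1+(f(0)+B/\Lambda)e^{\Lambda t}\big)$, and $\sqrt{a+b}\le\sqrt a+\sqrt b$ gives $\|\vec V^\epsilon\|_{\mathbb V_\epsilon^N}(t)\le C(1+\tilde\kappa e^{\lambda t})$ for suitable $\epsilon$-independent $C$ and $\tilde\kappa$.

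\textbf{Main obstacle.} The Gronwall mechanics and the energy manipulations are routine; the delicate part is bookkeeping the $\epsilon$-independence of \emph{every} constant, which rests on the $\epsilon$-independence of the constants in Lemma~\ref{H: l: : apriori}, on the uniform-in-$\epsilon$ Poincar\'e inequality of Remark~\ref{r: poincare}, and on $\tens L$, $\tens G$ and $\vec U^*$ not depending on $\epsilon$. A secondary technical point is justifying the spatial differentiation of \eqref{eq: L-sys} and the $L^2$-pairing with $\nabla\vec U^\epsilon$ at the regularity level supplied by Theorem~\ref{t: unexNeu}, which is why, if pressed, I would carry out the estimates on the Rothe approximation and pass to the weak (resp.\ weak-$*$) limit, the bounds being stable under such limits.
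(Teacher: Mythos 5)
Your proposal is correct and follows essentially the same route as the paper: test equation (\ref{eq: L-sys}) with $\vec U^\epsilon$ and its spatial derivative with $\nabla\vec U^\epsilon$ to obtain the paper's inequalities (\ref{eq: L1bound})--(\ref{eq: L2bound}), close the loop via the \textit{a priori} estimate of Lemma~\ref{H: l: : apriori}, and apply Gronwall's inequality; the uniformity-in-$\epsilon$ bookkeeping and the Rothe-approximation fallback you mention correspond to Remark~\ref{r: gradientL} and Appendix~\ref{app: constants} in the paper. The only cosmetic difference is that you spell out the final $\vec V^\epsilon$ substitution step and the regularity justification in slightly more detail than the paper does.
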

\begin{proof}
By (A1) - (A3) there exist positive numbers $\tilde{m}_\alpha$, $\tilde{e}_i$, $\tilde{H}$, $\tilde{K}_\alpha$, $\tilde{J}_{i\alpha}$ for $\alpha\in\{1,\ldots,N\}$ and $i\in\{1,\ldots,d\}$ such that the \textit{a priori} estimate (\ref{H: eq:  aprioriV}) stated in Lemma \ref{H: l: : apriori} holds.
Moreover, what concerns system \textbf{(P$_w^\epsilon$)} there exist $L_G$, $L_N$, $G_G$, and $G_N$, see equations (\ref{eq: app LNbound})-(\ref{eq: app GGbound}) in Appendix \ref{app: constants}, such that
\begin{subequations}
\begin{align}
    \frac{\partial}{\partial t}\|\vec{U}^\epsilon\|_{L^2(\Omega^\epsilon)^N}^2&\leq L_N\|\vec{U}^\epsilon\|_{L^2(\Omega^\epsilon)^N}^2+G_N\|\vec{V}^\epsilon\|_{L^2(\Omega^\epsilon)^N}^2,\label{eq: L1bound}\\
    \frac{\partial}{\partial t}\|\nabla \vec{U}^\epsilon\|_{L^2(\Omega^\epsilon)^{d\times N}}^2&\leq L_G\|\vec{U}^\epsilon\|_{L^2(\Omega^\epsilon)^N}^2+L_N\|\nabla \vec{U}^\epsilon\|_{L^2(\Omega^\epsilon)^{d\times N}}^2\cr
    &\quad+G_G\|\vec{V}^\epsilon\|_{L^2(\Omega^\epsilon)^N}^2+G_N\|\nabla \vec{V}^\epsilon\|_{L^2(\Omega^\epsilon)^{d\times N}}^2\label{eq: L2bound}
\end{align}
\end{subequations}
hold. Adding (\ref{eq: L1bound}) and (\ref{eq: L2bound}), and using (\ref{H: eq:  aprioriV}), we obtain a positive constant $I$ and a vector $\vec{J}\in\mathbf{R}_+^N$ such that
\begin{equation}
    \frac{\partial}{\partial t}\|\vec{U}^\epsilon\|_{H^1(\Omega^\epsilon)^N}^2\leq \vec{J}+I\|\vec{U}^\epsilon\|_{H^1(\Omega^\epsilon)^N}^2\label{H: eq:  ineq-gron}
\end{equation}
with
\begin{subequations}
\begin{align}
    I &=\max\!\left\{\!0,\! L_N\!+\!\max\!\left\{\!L_G\!+\!G_M\!\!\!\!\max_{1\leq\alpha\leq N}\!\{\tilde{K}_\alpha\},G_M\!\!\!\!\!\!\!\!\!\!\!\max_{1\leq\alpha\leq N,1\leq i\leq d}\{\tilde{J}_{i\alpha}\}\!\right\}\!\right\},\label{eq: Ibound}\\
    G_M &= \max_{1\leq\alpha<N,1\leq i\leq d}\left\{\frac{G_N+G_G}{\tilde{m}_\alpha},\frac{G_N}{\tilde{e}_i}\right\}.\label{eq: GMbound}
    \end{align}
\end{subequations}
Applying Gronwall's inequality, see \cite[Thm. 1]{Dragomir2003}, to (\ref{H: eq:  ineq-gron}) yields the existence of a constant $\lambda$ defined as $\lambda = I/2$, such that
\begin{equation}\label{eq: timebound}
    \|\vec{U}^\epsilon\|_{H^1(\Omega^\epsilon)^N}(t)\leq Ce^{\lambda t},\qquad \|\vec{V}^\epsilon\|_{\mathbb{V}_\epsilon^N}(t)\leq C(1+\tilde{\kappa}e^{\lambda t})
\end{equation}
with $\tilde{\kappa} = \max_{1\leq\alpha\leq N,1\leq i\leq d}\{\tilde{K}_\alpha,\tilde{J}_{i\alpha}\}$.
\qed\end{proof}
\begin{remarkArt}\label{r: lambda}
It is difficult to obtain exact expressions for optimal values of $L_N$, $L_G$, $G_N$ and $G_G$ such that a minimal positive value of $\lambda$ is obtained. See Appendix \ref{app: constants} for the exact dependence of $\lambda$ on the parameters involved in the Neumann problem (\ref{eq: Aeps-sys})-(\ref{eq: Vhole=0}).
\end{remarkArt}
\begin{remarkArt}\label{r: measurability}
The $(0,T)\times\Omega^\epsilon$-measurability of $\vec{U}^\epsilon$ and $\vec{V}^\epsilon$ can be proven based on the Rothe-method (discretization in time) in combination with the convergence of piecewise linear functions to any function in the spaces $H^1((0,T)\times\Omega^\epsilon)$ or $L^\infty((0,T);\mathbb{V}_\epsilon)$. One can prove that both $\vec{U}^\epsilon$ and $\vec{V}^\epsilon$ are measurable and are weak solutions to \textbf{($\mathbf{P}_w^\epsilon$)}. See Chapter 2 in \cite{VromansLIC} for a pseudo-parabolic system for which the Rothe-method is used to show existence (and hence also measurability).
\end{remarkArt}
\begin{remarkArt}\label{r: gradientL}
Since we have $\tens{G}\!\!\in\!\! L^\infty(\mathbf{R}_+;\!W^{1,\infty}(\Omega))^{N\!\times\! N}$ and $\vec{V}^\epsilon\!\!\in\!\! L^\infty((0,T);\!\mathbb{V}_\epsilon)^N$, we are allowed to differentiate equation (\ref{eq: L-sys}) with respect to $\vec{x}$ and test the resulting identity with both $\nabla\vec{U}^\epsilon$ and $\frac{\partial}{\partial t}\nabla\vec{U}^\epsilon$. However, conversely, we are not allowed to differentiate   equation (\ref{eq: Aeps-sys}) with respect to $t$ as all tensors have insufficient regularity: they are in $L^\infty(\mathbf{R}_+\times\Omega^\epsilon)^{N\times N}$.
\end{remarkArt}
\begin{remarkArt}\label{r: J=0}
We cannot differentiate equation (\ref{eq: L-sys}) with respect to $\vec{x}$ when $\tens{L}$ or $\tens{G}$ has decreased spatial regularity, for example
$L^\infty((0,T)\times\Omega)^{N\times N}$. One can still obtain unique solutions of \textbf{($\mathbf{P}_w^\epsilon$)} if and only if $\tens{J}^\epsilon = \tens{0}$ holds, since it removes the $\nabla\vec{U}^\epsilon$ term from equation (\ref{eq: Aeps-sys}). Consequently, Theorem \ref{H: t: existuniq} holds with $\vec{U}^\epsilon\in H^1((0,T);L^2(\Omega^\epsilon))$ and $\tens{J}^\epsilon=\tens{0}$ under the additional relaxed regularity assumption $\tens{L},\tens{G}\in L^\infty((0,T)\times\Omega)^{N\times N}$ and with $\lambda$ modified by taking $L_G=\tilde{J}_{i\alpha}=0$ and by replacing $G_M$ with $G_N/\min_{1\leq\alpha\leq N}\tilde{m}_\alpha$.
\end{remarkArt}
\subsection{Upscaling the system \textbf{(P$^\epsilon_w$)} via two-scale convergence}
\label{H: sec: 5}
We recall the notation $\hat{f}^\epsilon$ to denote the extension on $\Omega$ via the operator $\mathcal{P}^\epsilon$ for $f^\epsilon$ defined on $\Omega^\epsilon$. This extension operator $\mathcal{P}^\epsilon$, as defined in Theorem \ref{t: extension}, is well-defined if both $\partial\mathcal{T}$ and $\partial \Omega$ are $C^2$-regular, assumption (A4) holds, and $\partial\mathcal{T}\cap\partial Y=\emptyset$. Hence, the extension operator is well-defined in our setting.
\begin{lemma}
    \label{H: l: : two-scale-conv} Assume (A1)-(A4) to hold. For each $\epsilon\in(0,\epsilon_0)$, let the pair of sequences $(\vec{U}^\epsilon,\vec{V}^\epsilon)\in H^1((0,T)\times\Omega^\epsilon)^N\times L^\infty((0,T);\mathbb{V}_\epsilon)^N$ be the unique weak solution to \textbf{($\mathbf{P}^{\,\epsilon}_w$)}. Then this sequence of weak solutions satisfies the estimates
\begin{equation}
    \|\vec{U}^\epsilon\|_{H^1((0,T)\times\Omega^\epsilon)^N}+\|\vec{V}^\epsilon\|_{L^\infty((0,T);\mathbb{V}_\epsilon)^N}\leq C,\label{H: eq:  bounds}
\end{equation}
for all $\epsilon\in(0,\epsilon_0)$ and there exist vector functions
\begin{subequations}
\begin{align}
    \vec{u}&\text{ in }H^1((0,T)\times\Omega)^N,\label{eq: 2u1}\\
    \mathcal{U}&\text{ in }H^1((0,T);L^2(\Omega;H^1_\#(Y^*)/\mathbf{R}))^N,\label{eq: 2u2}\\
    \vec{v}&\text{ in }L^\infty((0,T);H^1_0(\Omega))^N,\label{eq: 2v1}\\
    \mathcal{V}&\text{ in }L^\infty((0,T)\times\Omega;H^1_\#(Y^*)/\mathbf{R})^N,\label{eq: 2v2}
\end{align}
\end{subequations}
and a subsequence $\epsilon'\subset\epsilon$, for which the following two-scale convergences
\begin{subequations}
\begin{align}
    \hat{\vec{U}}^{\epsilon'}&\overset{2}{\longrightarrow}\vec{u}(t,\vec{x}),\label{H: eq:  twoscale1}\\
    \frac{\partial}{\partial t}\hat{\vec{U}}^{\epsilon'}&\overset{2}{\longrightarrow}\frac{\partial}{\partial t}\vec{u}(t,\vec{x}),\label{H: eq:  twoscale1t}\\
    \nabla\hat{\vec{U}}^{\epsilon'}&\overset{2}{\longrightarrow}\nabla\vec{u}(t,\vec{x})+\nabla_{\vec{y}}\mathcal{U}(t,\vec{x},\vec{y}),\label{H: eq:  twoscale1D}\\
    \frac{\partial}{\partial t}\nabla\hat{\vec{U}}^{\epsilon'}&\overset{2}{\longrightarrow}\frac{\partial}{\partial t}\nabla\vec{u}(t,\vec{x})+\frac{\partial}{\partial t}\nabla_{\vec{y}}\mathcal{U}(t,\vec{x},\vec{y}),\label{H: eq:  twoscale1Dt}\\
    \hat{\vec{V}}^{\epsilon'}&\overset{2}{\longrightarrow}\vec{v}(t,\vec{x}),\label{H: eq:  twoscale2}\\
    \nabla\hat{\vec{V}}^{\epsilon'}&\overset{2}{\longrightarrow}\nabla\vec{v}(t,\vec{x})+\nabla_{\vec{y}}\mathcal{V}(t,\vec{x},\vec{y})\label{H: eq:  twoscale2D}
\end{align}
\end{subequations}
 hold for a.e. $t\in(0,T)$, $\vec{x}\in\Omega$, and $\vec{y}\in Y^*$.
\end{lemma}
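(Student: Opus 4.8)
The plan is to split the argument into two independent parts: (i) establishing the uniform $\epsilon$-independent bound \eqref{H: eq:  bounds}, and (ii) extracting two-scale convergent subsequences and identifying their limits. For part (i), I would simply combine the bounds of Theorem \ref{H: t: existuniq} over the finite interval $(0,T)$: integrating $\|\vec{U}^\epsilon\|_{H^1(\Omega^\epsilon)^N}^2(t)\leq C^2e^{2\lambda t}$ and $\|\vec{V}^\epsilon\|_{\mathbb{V}_\epsilon^N}^2(t)\leq C^2(1+\tilde\kappa e^{\lambda t})^2$ in $t$, together with a bound on $\partial_t\vec{U}^\epsilon$ obtained by testing \eqref{eq: L-sys} with $\partial_t\vec{U}^\epsilon$ and using the already-established bounds on $\vec{U}^\epsilon$, $\vec{V}^\epsilon$ via \eqref{eq: L1bound}; note that Remark \ref{r: gradientL} licenses differentiating \eqref{eq: L-sys} in $\vec{x}$ and testing with $\partial_t\nabla\vec{U}^\epsilon$, which is what yields the full $H^1((0,T)\times\Omega^\epsilon)^N$ bound on $\vec{U}^\epsilon$ including the time-derivative-of-gradient piece. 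All constants inherit $\epsilon$-independence from Theorem \ref{H: t: existuniq} and from the extension operator bounds of Theorem \ref{t: extension} and Corollary \ref{c: extension}, so after applying $\mathcal{P}^\epsilon$ the extended functions $\hat{\vec{U}}^\epsilon$, $\hat{\vec{V}}^\epsilon$ satisfy the same bounds on the fixed domain $\Omega$.

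For part (ii), with uniform bounds on $\hat{\vec{U}}^\epsilon$ in $H^1((0,T)\times\Omega)^N$ and on $\hat{\vec{V}}^\epsilon$ in $L^\infty((0,T);H^1_0(\Omega))^N$ in hand, I invoke the standard two-scale compactness theorems (see Appendix \ref{a: 2-scale}): a bounded sequence in $L^2$ has a two-scale convergent subsequence, and a sequence bounded in $H^1$ two-scale converges, along a further subsequence, to a limit of the form $\vec{u}(t,\vec{x})$ with its gradient two-scale converging to $\nabla\vec{u}(t,\vec{x})+\nabla_{\vec{y}}\mathcal{U}(t,\vec{x},\vec{y})$, where $\mathcal{U}\in L^2(\Omega;H^1_\#(Y^*)/\mathbf{R})^N$. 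Applying this successively to $\hat{\vec{U}}^\epsilon$, $\partial_t\hat{\vec{U}}^\epsilon$, $\nabla\hat{\vec{U}}^\epsilon$, $\partial_t\nabla\hat{\vec{U}}^\epsilon$, $\hat{\vec{V}}^\epsilon$, and $\nabla\hat{\vec{V}}^\epsilon$, and passing to a common diagonal subsequence $\epsilon'$, produces the six convergences \eqref{H: eq:  twoscale1}–\eqref{H: eq:  twoscale2D}. The compatibility of the time-derivative limits — that the two-scale limit of $\partial_t\hat{\vec{U}}^{\epsilon'}$ is genuinely $\partial_t$ of the two-scale limit of $\hat{\vec{U}}^{\epsilon'}$, and likewise for $\partial_t\nabla_{\vec{y}}\mathcal{U}$ — follows by testing against smooth test functions of the form $\phi(t,\vec{x})\psi(\vec{x}/\epsilon)$ and integrating by parts in $t$, since differentiation in $t$ commutes with the two-scale limit when the relevant time-derivative sequence is itself bounded in $L^2((0,T)\times\Omega)$. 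The membership $\mathcal{U}\in H^1((0,T);L^2(\Omega;H^1_\#(Y^*)/\mathbf{R}))^N$ then comes from the joint bound on $\nabla\hat{\vec{U}}^{\epsilon'}$ and $\partial_t\nabla\hat{\vec{U}}^{\epsilon'}$, and the $L^\infty$-in-time regularity of $\vec{v}$ and $\mathcal{V}$ from the $L^\infty((0,T);\cdot)$ bounds on $\vec{V}^\epsilon$ passing to the limit (lower semicontinuity of the norm under two-scale convergence).

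The main obstacle I expect is the careful handling of the perforated geometry in the two-scale limits of the gradients: one must check that the cell-problem corrector $\mathcal{U}$ (resp. $\mathcal{V}$) lives in $H^1_\#(Y^*)/\mathbf{R}$ — i.e. on the perforated cell with the quotient by constants — rather than on the full cell $Y$, and that the extension operator $\mathcal{P}^\epsilon$ does not corrupt this structure. This is where assumption (A4) (perforations not meeting $\partial\Omega$, Remark \ref{r: A4}) and the uniform-in-$\epsilon$ extension estimates are essential: they guarantee that $\nabla\hat{\vec{V}}^\epsilon$ restricted to $\Omega^\epsilon$ still controls the oscillations, so that the two-scale limit detected on test functions supported in the fluid part $Y^*$ identifies $\nabla_{\vec{y}}\mathcal{V}$ correctly, while on $\mathcal{T}$ the extended gradient is harmless. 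A secondary subtlety is that \eqref{eq: L-sys} holds pointwise in $\vec{x}\in\Omega^\epsilon$ with no diffusion, so $\vec{U}^\epsilon$ genuinely oscillates and the corrector $\mathcal{U}$ is not a priori zero; the bound \eqref{H: eq:  bounds} together with Remark \ref{r: gradientL} is exactly what is needed to make $\partial_t\nabla_{\vec{y}}\mathcal{U}$ well-defined in $L^2$, and this must be tracked through the diagonal extraction so that all six limits refer to the \emph{same} subsequence $\epsilon'$.
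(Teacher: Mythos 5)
Your proposal is correct and follows essentially the same route as the paper's proof: uniform bounds from Theorem \ref{H: t: existuniq} together with the extension operator, then two-scale compactness (Proposition \ref{H: p: grad-extra}) along a common subsequence, with an integration-by-parts-in-time argument identifying $\partial_t\nabla_{\vec{y}}\mathcal{U}=\nabla_{\vec{y}}\tilde{\mathcal{U}}$ to upgrade $\mathcal{U}$ to $H^1$ in time. You make explicit a few steps the paper leaves terse (testing the ODE and its spatial gradient for the $\partial_t\vec{U}^\epsilon$ and $\partial_t\nabla\vec{U}^\epsilon$ bounds, and lower semicontinuity giving $\vec{v},\mathcal{V}$ the stated $L^\infty$-in-time regularity), but the underlying strategy is the same.
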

\begin{proof}
For all $\epsilon>0$, Theorem \ref{H: t: existuniq} gives the bounds (\ref{H: eq:  bounds}) independent of the choice of $\epsilon$. Hence, $\hat{\vec{U}}^\epsilon\rightharpoonup \vec{u}$ in $H^1((0,T)\times\Omega)^N$ and $\hat{\vec{V}}^\epsilon\rightharpoonup \vec{v}$ in $L^\infty((0,T);H^1_0(\Omega))^N$ as $\epsilon\rightarrow0$. By Proposition \ref{H: p: grad-extra} in Appendix \ref{a: 2-scale}, we obtain a subsequence $\epsilon'\subset\epsilon$ and functions $\vec{u}\in H^1((0,T)\times\Omega)^N$, $\vec{v}\in L^2((0,T);H^1_0(\Omega))^N$, $\mathcal{U},\mathcal{V}\in L^2((0,T)\times\Omega;H^1_\#(Y^*)/\mathbf{R})^N$ such that (\ref{H: eq:  twoscale1}), (\ref{H: eq:  twoscale1t}), (\ref{H: eq:  twoscale1D}), (\ref{H: eq:  twoscale2}), and (\ref{H: eq:  twoscale2D}) hold for a.e. $t\in(0,T)$. Moreover, there exists a vector function $\tilde{\mathcal{U}}\in L^2((0,T)\times\Omega;H^1_\#(Y^*)/\mathbf{R})^N$ such that the following two-scale convergence
\begin{equation}
    \frac{\partial}{\partial t}\nabla\hat{\vec{U}}^{\epsilon'}\overset{2}{\longrightarrow}\frac{\partial}{\partial t}\nabla\vec{u}(t,\vec{x})+\nabla_{\vec{y}}\tilde{\mathcal{U}}(t,\vec{x},\vec{y})\label{eq: 2u3}
\end{equation}
holds for the same subsequence $\epsilon'$. Using two-scale convergence, Fubini's Theorem and partial integration in time, we obtain an increased regularity for $\mathcal{U}$, i.e. $\mathcal{U}\in H^1((0,T);L^2(\Omega;H^1_\#(Y^*)/\mathbf{R}))^N$, with $\frac{\partial}{\partial t}\nabla_{\vec{y}}\mathcal{U} = \nabla_{\vec{y}}\tilde{\mathcal{U}}$.
\qed\end{proof}
By Lemma \ref{H: l: : two-scale-conv}, we can determine what the macroscopic version of \textbf{(P$^\epsilon_w$)}, which we denote by \textbf{(P$^0_w$)}. This is as stated in Theorem \ref{H: t: upscale}.
\begin{theorem}\label{H: t: upscale} Assume the hypotheses of Lemma \ref{H: l: : two-scale-conv} to be satisfied. Then the two-scale limits $\vec{u}\in H^1((0,T)\times\Omega)^N$ and $\vec{v}\in L^\infty((0,T);H^1_0(\Omega))^N$ introduced in Lemma \ref{H: l: : two-scale-conv} form a weak solution to \begin{equation*}
    \textbf{$(\mathbf{P}^0_w)$}\qquad\quad\begin{dcases}
    \int_\Omega\vec{\phi}^\top\left[\overline{\tens{M}}\vec{v}-\overline{\vec{H}} - \overline{\tens{K}}\vec{u}\right]&\cr
    \qquad\qquad+(\nabla\vec{\phi})^\top\cdot\left(\tens{E}^*\cdot\nabla\vec{v}+\tens{D}^*\vec{v}\right)\mathrm{d}\vec{x} = 0,\cr
    \int_\Omega\vec{\psi}^\top\left[\frac{\partial \vec{u}}{\partial t}+\tens{L}\vec{u}- \tens{G}\vec{v}\right]\mathrm{d}\vec{x} = 0,\cr
    \vec{u}(0,\vec{x})=\vec{U}^*(\vec{x})\qquad\text{for }\vec{x}\in\Omega,
    \end{dcases}
\end{equation*}
for a.e. $t\in(0,T)$ for all test functions $\vec{\phi}\in H^1_0(\Omega)^N$, and $\vec{\psi}\in L^2(\Omega)^N$, where the barred tensors and vectors are $Y^*$ averaged functions as introduced in (A2). Furthermore,
\begin{equation}
\tens{E}^*=\frac{1}{|Y|}\int_{Y^*}\tens{E}\cdot(\tens{1}+\nabla_{\vec{y}}\vec{W})\mathrm{d}\vec{y},\quad\tens{D}^*=\frac{1}{|Y|}\int_{Y^*}\tens{D}+\tens{E}\cdot\nabla_{\vec{y}}\tens{Z}\mathrm{d}\vec{y}\label{eq: *matrices}
\end{equation}
are the wanted \emph{effective} coefficients. The auxiliary tensors\\
$\tens{Z}_{\alpha\beta},W_i\in L^\infty(0,T;W^{2,\infty}(\Omega;H^1_\#(Y^*)/\mathbf{R}))$ satisfy the cell problems
\begin{subequations}
\begin{align}
\vec{0}&=\int_{Y^*}\vec{\Phi}^\top\cdot(\nabla_{\vec{y}}\cdot\left[\tens{E}\cdot(\tens{1}+\nabla_{\vec{y}}\vec{W})\right])\mathrm{d}\vec{y}=\int_{Y^*}\vec{\Phi}^\top\cdot(\nabla_{\vec{y}}\cdot\hat{\tens{E}})\mathrm{d}\vec{y},\label{H: eq:  cell1}\\
\vec{0}&=\int_{Y^*}\vec{\Psi}^\top(\nabla_{\vec{y}}\cdot\left[\tens{D}+\tens{E}\cdot\nabla_{\vec{y}}\tens{Z}\right])\mathrm{d}\vec{y}=\int_{Y^*}\vec{\Psi}^\top(\nabla_{\vec{y}}\cdot\hat{\tens{D}})\mathrm{d}\vec{y}\label{H: eq:  cell2}
\end{align}
\end{subequations}
for all $\vec{\Phi}\in C_\#(Y^*)^d$, $\vec{\Psi}\in C_\#(Y^*)^N$.
\end{theorem}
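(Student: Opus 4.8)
The plan is to pass to the two-scale limit in the weak formulation \textbf{(P$_w^\epsilon$)} using the convergences established in Lemma \ref{H: l: : two-scale-conv}. First I would handle the elliptic equation. I choose oscillating test functions of the form $\vec{\phi}^\epsilon(t,\vec{x}) = \vec{\phi}_0(t,\vec{x}) + \epsilon\vec{\phi}_1(t,\vec{x},\vec{x}/\epsilon)$ with $\vec{\phi}_0\in C_0^\infty((0,T)\times\Omega)^N$ and $\vec{\phi}_1\in C_0^\infty((0,T)\times\Omega;C_\#(Y^*))^N$, so that $\nabla\vec{\phi}^\epsilon = \nabla\vec{\phi}_0 + \nabla_{\vec{y}}\vec{\phi}_1 + \epsilon\nabla_{\vec{x}}\vec{\phi}_1$. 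Inserting this into the first line of \textbf{(P$_w^\epsilon$)}, integrating in time against a smooth scalar cutoff, and using (A1) together with the two-scale convergences \eqref{H: eq:  twoscale2}, \eqref{H: eq:  twoscale2D}, \eqref{H: eq:  twoscale1} and \eqref{H: eq:  twoscale1D} (and the fact that $\tens{J}^\epsilon=\epsilon\tens{J}^\epsilon$ so the drift term on $\nabla\vec{U}^\epsilon$ vanishes in the limit), the limit integral is over $(0,T)\times\Omega\times Y^*$ and reads, schematically,
\begin{equation*}
\int\!\!\!\int\!\!\!\int \vec{\phi}_0^\top[\tens{M}\vec{v}-\vec{H}-\tens{K}\vec{u}] + (\nabla\vec{\phi}_0+\nabla_{\vec{y}}\vec{\phi}_1)^\top\cdot\big(\tens{E}\cdot(\nabla\vec{v}+\nabla_{\vec{y}}\mathcal{V})+\tens{D}\vec{v}\big) = 0 .
\end{equation*}

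Next I would extract the two limit problems by specialization. Taking $\vec{\phi}_0=0$ gives the cell identity: for a.e. $(t,\vec{x})$, the microscopic corrector $\mathcal{V}$ solves a linear elliptic cell problem on $Y^*$ with $\vec{v}$ (and $\nabla\vec{v}$) as data and homogeneous Neumann data on $\partial\mathcal{T}$ (inherited from \eqref{eq: Vhole=0}); by linearity one writes $\mathcal{V} = \vec{W}\cdot\nabla\vec{v} + \tens{Z}\vec{v}$ where $W_i$, $\tens{Z}_{\alpha\beta}$ solve the cell problems \eqref{H: eq:  cell1}-\eqref{H: eq:  cell2}, and the claimed regularity $L^\infty(0,T;W^{2,\infty}(\Omega;H^1_\#(Y^*)/\mathbf{R}))$ follows from elliptic regularity on $Y^*$ and the $W^{3,\infty}$-in-$\vec{x}$ regularity of $\tens{E},\tens{D}$ in (A1), differentiating the cell equation in $\vec{x}$. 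Taking $\vec{\phi}_1=0$ and substituting this ansatz for $\mathcal{V}$, the $\vec{y}$-integration produces exactly the effective tensors $\tens{E}^*$, $\tens{D}^*$ of \eqref{eq: *matrices} and the $Y^*$-averages $\overline{\tens{M}},\overline{\vec{H}},\overline{\tens{K}}$ (using the definition \eqref{eq: average} with the $1/|Y|$ normalization), yielding the first equation of \textbf{(P$^0_w$)}. For the ODE line, I pass to the two-scale limit directly with a test function $\vec{\psi}_0(t,\vec{x})$: since $\tens{L},\tens{G}$ depend only on $\vec{x}$, the limit is immediate from \eqref{H: eq:  twoscale1t} and \eqref{H: eq:  twoscale2}, giving $\int_\Omega\vec{\psi}_0^\top[\partial_t\vec{u}+\tens{L}\vec{u}-\tens{G}\vec{v}] = 0$. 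The initial condition $\vec{u}(0,\cdot)=\vec{U}^*$ transfers because $\hat{\vec{U}}^\epsilon\to\vec{u}$ in $H^1((0,T)\times\Omega)$ so traces at $t=0$ converge and $\vec{U}^*$ is $\epsilon$-independent.

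Finally I would upgrade test function classes from the smooth dense classes to $H^1_0(\Omega)^N$ and $L^2(\Omega)^N$ by density, noting all coefficients are bounded, and remark that the limit equations hold for a.e. $t\in(0,T)$. The convergences \eqref{eq: uconverge1}-\eqref{eq: vconverge1} and their weak consequences \eqref{eq: uconverge2}-\eqref{eq: vconverge2} follow once $\vec{u}=\vec{U}^0$, $\vec{v}=\vec{V}^0$ is identified with the leading terms of the asymptotic expansion, which is the content of the later subsection \ref{sec: 4.3} (Lemmas \ref{l: simple}, \ref{l: v0-yindep}, \ref{l: u0v0-sys}, Theorem \ref{t: it-is-u0}); in particular $\vec{V}^0$ being $\vec{y}$-independent is what makes $\vec{v}\in L^\infty(0,T;H^1_0(\Omega))$ rather than merely a two-scale object. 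The main obstacle is the careful bookkeeping in passing to the two-scale limit of the $\tens{D}^\epsilon\vec{V}^\epsilon$ term and the mixed gradient terms: one must be sure that the product of the (only) two-scale-convergent $\nabla\hat{\vec{V}}^\epsilon$ against the strongly-convergent (in the appropriate sense) admissible test function $\tens{D}^\epsilon$-type coefficient is legitimate — this is where the $C_\#(Y^*)$-regularity in (A1) and the admissibility of test functions for two-scale convergence (Appendix \ref{a: 2-scale}) are essential — and, relatedly, that the extension operator $\mathcal{P}^\epsilon$ does not destroy the weak form on the holes, which is handled by the uniform bounds of Theorem \ref{t: extension} and assumption (A4) ensuring clean boundary geometry.
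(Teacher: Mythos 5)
Your proposal is correct and follows essentially the same route as the paper's proof: you use oscillating test functions $\vec{\phi}^\epsilon = \vec{\phi}_0 + \epsilon\vec{\phi}_1(\cdot,\cdot/\epsilon)$ and $\vec{\psi}^\epsilon = \vec{\psi}_0(\cdot,\cdot/\epsilon)$, pass to the two-scale limit via Lemma \ref{H: l: : two-scale-conv} and the admissibility/product results of Appendix \ref{a: 2-scale} (the paper's Corollary \ref{H: c: 2-scale} and Theorem \ref{H: t: tripprod}) to reach the combined limit identity \eqref{eq: 2scalelimsys}, then specialize $\vec{\phi}_0=0$ to obtain the cell problems and $\vec{\phi}_1=0$ with the linear ansatz $\mathcal{V}=\vec{W}\cdot\nabla\vec{v}+\tens{Z}\vec{v}$ (the paper allows an additional $\vec{y}$-independent term $\tilde{\mathcal{V}}$, which is harmless modulo constants) to read off the effective tensors \eqref{eq: *matrices}. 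The only point the paper makes explicit that you gloss over is the passage of the interior Neumann boundary condition \eqref{eq: Vhole=0} to the cell-problem boundary condition on $\partial\mathcal{T}$, which the paper handles via the trace embedding $H^{1/2}(Y^*)\hookrightarrow L^2(\partial\mathcal{T})$ and the two-scale product theorem; your remark that the homogeneous Neumann data is "inherited" is correct but would need this justification in a full write-up.
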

\begin{proof} The solution to system \textbf{(P$^\epsilon_w$)} is extended to $\Omega$ by taking $\hat{\vec{H}}^\epsilon$, $\hat{\vec{V}}^\epsilon$, $\hat{\vec{U}}^\epsilon$ for $\vec{H}^\epsilon$, $\vec{V}^\epsilon$, $\vec{U}^\epsilon$, respectively. The extended system is satisfied on $\mathcal{T}^\epsilon\cap\Omega$ and it satisfies the boundary conditions on $\partial_{int}\Omega^\epsilon$ of system \textbf{(P$^\epsilon_w$)}. Hence, it is sufficient to look at \textbf{(P$^\epsilon_w$)} only. In \textbf{(P$^\epsilon_w$)}, we choose $\vec{\psi}=\vec{\psi}^\epsilon = \vec{\Psi}\left(t,\vec{x},\frac{\vec{x}}{\epsilon}\right)$ for the test function $\vec{\Psi}\in L^2((0,T);\mathcal{D}(\Omega^\epsilon;C^\infty_\#(Y^*)))^N$, $\vec{\phi}=\vec{\phi}^\epsilon = \vec{\Phi}(t,\vec{x})+\epsilon\vec{\varphi}\left(t,\vec{x},\frac{\vec{x}}{\epsilon}\right)$ for the test functions $\vec{\Phi}\in L^2((0,T);C^\infty_0(\Omega^\epsilon))^N$,  $\vec{\varphi}\in L^2((0,T);\mathcal{D}(\Omega^\epsilon;C^\infty_\#(Y^*)))^N$. Corollary \ref{H: c: 2-scale} and Theorem \ref{H: t: tripprod} in combination with (\ref{H: eq:  correspondence}) lead to $\tens{T}^\epsilon\overset{2}{\longrightarrow}\tens{T}$, where $\tens{T}^\epsilon$ is an arbitrary tensor or vector in (P$^\epsilon_w$) other than $\tens{L}$ and $\tens{G}$. Moreover, by Corollary \ref{H: c: 2-scale} and Propositions \ref{H: p: grad-extra} and \ref{H: p: grad} we have $\vec{\psi}^\epsilon\overset{2}{\longrightarrow}\vec{\Psi}(t,\vec{x},\vec{y})$,  
$\vec{\phi}^\epsilon\overset{2}{\longrightarrow}\vec{\Phi}(t,\vec{x})$, and
$\nabla\vec{\phi}^\epsilon\overset{2}{\longrightarrow}\nabla\vec{\Phi}(t,\vec{x})+\nabla_{\vec{y}}\vec{\varphi}(t,\vec{x},\vec{y})$. By Corollary \ref{H: c: 2-scale} and Theorem \ref{H: t: tripprod}, there is a two-scale limit of \textbf{(P$^\epsilon_w$)}, reading
\begin{multline}
    \int_\Omega\frac{1}{|Y|}\int_{Y^*}
    \vec{\Phi}^\top\left[\tens{M}\vec{v}-\vec{H}-\tens{K}\vec{u}\right]\\
    +(\nabla\vec{\Phi}+\nabla_{\vec{y}}\vec{\varphi})^\top\cdot\left[\tens{E}\cdot(\nabla\vec{v}+\nabla_{\vec{y}}\mathcal{V})+\tens{D}\vec{v}\right]\\ 
    +\vec{\Psi}^\top\left[\frac{\partial\vec{u}}{\partial t}+\tens{L}\vec{u}-\tens{G}\vec{v}\right]\mathrm{d}\vec{y}\mathrm{d}\vec{x}=0.\label{eq: 2scalelimsys}
\end{multline}
Similarly, the initial condition
\begin{equation}
\vec{u}(0,\vec{x})=\vec{U}^*(\vec{x}), \quad \vec{x}\in\overline{\Omega},\label{eq: 2scaleinit}
\end{equation}
is satisfied by $\vec{u}$ as $\nabla_{\vec{y}}\vec{u}=\tens{0}$ holds.\\
For $\vec{\Phi}=\vec{\Psi} = \vec{0}$, we can take $\mathcal{V} = \vec{W}\cdot\nabla\vec{v}+\tens{Z}\vec{v}+\tilde{\mathcal{V}}$, where $\vec{W}$ and $\tens{Z}$ satisfy the cell problems (\ref{H: eq:  cell1}) and (\ref{H: eq:  cell2}), respectively, and $\nabla_{\vec{y}}\tilde{\mathcal{V}}=\tens{0}$. Moreover, we obtain $\vec{v}\in L^\infty((0,T);H^2(\Omega))$ due to (A1). Then Proposition \ref{H: p: grad-extra}, Theorem \ref{H: t: tripprod} and the embedding $H^{1/2}(Y^*)\hookrightarrow L^2(\partial\mathcal{T})$ yields $0 = \frac{\partial \vec{V}^\epsilon}{\partial\nu_{\tens{D}^\epsilon}}\overset{2}{\longrightarrow}(\hat{E}\nabla\vec{v}+\hat{D}\vec{v})\cdot\vec{n} = 0$ on $\partial Y^*$, which is automatically guaranteed by (\ref{H: eq:  cell1}) and (\ref{H: eq:  cell2}).
\qed\end{proof}
Hence, \textbf{(P$^0_w$)} yields the strong form system
\begin{equation*}
    \text{\textbf{(P$^0_s$)}}\quad\begin{dcases}
    \overline{\tens{M}}\vec{v}-\nabla\cdot\left(\tens{E}^*\cdot\nabla\vec{v}+\tens{D}^*\vec{v}\right) = \overline{\vec{H}} + \overline{\tens{K}}\vec{u}&\text{in }(0,T)\times\Omega,\cr
    \frac{\partial\vec{u}}{\partial t}+\tens{L}\vec{u}= \tens{G}\vec{v}&\text{in }(0,T)\times\Omega,\cr
    \vec{v}= \vec{0}&\text{on }(0,T)\times\partial\Omega,\cr
    \vec{u}= \vec{U}^*&\text{on }\{0\}\times\overline{\Omega},
    \end{dcases}\label{H: eq:  system-Ps}
\end{equation*}
when, next to the regularity of (A1), the following regularity holds:
    \begin{subequations}
    \begin{align}
     M_{\alpha\beta},H_\alpha,K_{\alpha\beta}&\in C(0,T;C^{1}(\Omega;C^1_\#(Y^*))),\label{eq: regularity1a}\\
     E_{ij},D_{i\alpha\beta}&\in C(0,T;C^{2}(\Omega;C^2_\#(Y^*))),\label{eq: regularity1b}\\
     L_{\alpha\beta},G_{\alpha\beta}&\in C(0,T;C^{1}(\Omega)),\label{eq: regularity1c}\\
     \vec{U}^*&\in C(\overline{\Omega}),\label{eq: regularity1d}
    \end{align}
    \end{subequations}
for all $T\in\mathbf{R}_+$, when both $\partial\Omega$ and $\partial\mathcal{T}$ are $C^3$-boundaries.
\subsection{Upscaling via asymptotic expansions} \label{sec: 4.3}
Even though the previous section showed that there is a two-scale limit $(\vec{u},\vec{v})$, it is necessary to show the relation between $(\vec{u},\vec{v})$ and $(\vec{U}^\epsilon,\vec{V}^\epsilon)$. To this end, we first rewrite the Neumann problem (\ref{eq: Aeps-sys})-(\ref{eq: Vhole=0}) and then use asymptotic expansions such that we are lead to the two-scale limit, including the cell-functions, in a natural way.\\
$\;$\\
The Neumann problem (\ref{eq: Aeps-sys})-(\ref{eq: Vhole=0}) can be written in operator form as
\begin{equation}\label{eq: neumann-operator}
    \begin{dcases}
    \mathcal{A}^\epsilon \vec{V}^\epsilon = \mathcal{H}^\epsilon\vec{U}^\epsilon&\text{ on }(0,T)\times\Omega^\epsilon,\cr
    \mathcal{L}\vec{U}^\epsilon = \tens{G}\vec{V}^\epsilon&\text{ on }(0,T)\times\Omega^\epsilon,\cr
    \vec{U}^\epsilon = \vec{U}^*&\text{ in }\{0\}\times\Omega^\epsilon,\cr
    \vec{V}^\epsilon = 0&\text{ on }(0,T)\times\partial_{ext}\Omega^\epsilon,\cr
    \frac{\mathrm{d} \vec{V}^\epsilon}{\mathrm{d}\nu_{\tens{D}^\epsilon}} = 0&\text{ on }(0,T)\times\partial_{int}\Omega^\epsilon.
    \end{dcases}
\end{equation}
as indicated in Section \ref{s: sec2}.\\.
We postulate the following asymptotic expansions in $\epsilon$ of $\vec{U}^\epsilon$ and $\vec{V}^\epsilon$:
\begin{subequations}
\begin{align}
    \vec{V}^\epsilon(t,\vec{x}) &= \vec{V}^0\left(t,\vec{x},\frac{\vec{x}}{\epsilon}\right)+\epsilon \vec{V}^1\left(t,\vec{x},\frac{\vec{x}}{\epsilon}\right)+\epsilon^2 \vec{V}^2\left(t,\vec{x},\frac{\vec{x}}{\epsilon}\right)+\cdots, \label{eq: formalV}\\
    \vec{U}^\epsilon(t,\vec{x}) &= \vec{U}^0\left(t,\vec{x},\frac{\vec{x}}{\epsilon}\right)+\epsilon \vec{U}^1\left(t,\vec{x},\frac{\vec{x}}{\epsilon}\right)+\epsilon^2 \vec{U}^2\left(t,\vec{x},\frac{\vec{x}}{\epsilon}\right)+\cdots\label{eq: formalU}
\end{align}
\end{subequations}
Let $\vec{\Phi} = \vec{\Phi}(t,\vec{x},\vec{y})\in L^\infty(0,T;C^2(\Omega;C^2_\#(Y^*)))^N$ be a vector function depending on two spatial variables $\vec{x}$ and $\vec{y}$, and introduce $\vec{\Phi}^\epsilon(t,\vec{x}) = \vec{\Phi}(t,\vec{x},\vec{x}/\epsilon)$. Then the total spatial derivatives in $\vec{x}$ become two partial derivatives, one in $\vec{x}$ and one in $\vec{y}$:
\begin{subequations}
\begin{align}
    \nabla\vec{\Phi}^\epsilon(t,\vec{x}) &= \frac{1}{\epsilon}(\nabla_{\vec{y}}\vec{\Phi})\left(t,\vec{x},\frac{\vec{x}}{\epsilon}\right)+ (\nabla_{\vec{x}}\vec{\Phi})\left(t,\vec{x},\frac{\vec{x}}{\epsilon}\right),\label{eq: gradexpans}\\
    \nabla\cdot\vec{\Phi}^\epsilon(t,\vec{x}) &= \frac{1}{\epsilon}(\nabla_{\vec{y}}\cdot\vec{\Phi})\left(t,\vec{x},\frac{\vec{x}}{\epsilon}\right)+ (\nabla_{\vec{x}}\cdot\vec{\Phi})\left(t,\vec{x},\frac{\vec{x}}{\epsilon}\right).\label{eq: divexpans}
\end{align}
\end{subequations}
Do note, the evaluation $\vec{y}=\vec{x}/\epsilon$ is suspended as is common in formal asymptotic expansions, leading to the use of $\vec{y}\in Y^*$ and $\vec{x}\in\Omega$.\\
Hence, $\mathcal{A}^\epsilon\vec{\Phi}^\epsilon$ can be formally expanded:
\begin{equation}\label{eq: Aformal}
    \mathcal{A}^\epsilon\vec{\Phi}^\epsilon = \left[\left(\frac{1}{\epsilon^2}\mathcal{A}^0+\frac{1}{\epsilon}\mathcal{A}^1+\mathcal{A}^2\right)\vec{\Phi}\right]\left(t,\vec{x},\frac{\vec{x}}{\epsilon}\right),
\end{equation}
where
\begin{subequations}
\begin{align}
    \mathcal{A}^0\vec{\Phi}&=-\nabla_{\vec{y}}\cdot\left(\tens{E}\cdot\nabla_{\vec{y}}\vec{\Phi}\right),\label{eq: A0}\\
    \mathcal{A}^1\vec{\Phi}&=-\nabla_{\vec{y}}\cdot\left(\tens{E}\cdot\nabla_{\vec{x}}\vec{\Phi}\right)-\nabla_{\vec{x}}\cdot\left(\tens{E}\cdot\nabla_{\vec{y}}\vec{\Phi}\right)-\nabla_{\vec{y}}\cdot\left(\tens{D}\vec{\Phi}\right),\label{eq: A1}\\
    \mathcal{A}^2\vec{\Phi}&=\!\qquad\!\qquad\!\qquad\tens{M}\vec{\Phi}-\nabla_{\vec{x}}\cdot\left(\tens{E}\cdot\nabla_{\vec{x}}\vec{\Phi}\right)-\nabla_{\vec{x}}\cdot\left(\tens{D}\vec{\Phi}\right).\label{eq: A2}
\end{align}
\end{subequations}
Moreover, $\mathcal{H}^\epsilon\vec{\Phi}^\epsilon$ can be written as $\vec{H}+(\mathcal{H}^0+\epsilon \mathcal{H}^1)\vec{\Phi}$, where
\begin{subequations}
\begin{align}
    \mathcal{H}^0&=\tens{K}+\tens{J}\cdot\nabla_{\vec{y}},\label{eq: H0}\\
    \mathcal{H}^1&=\qquad\tens{J}\cdot\nabla_{\vec{x}}.\label{eq: H1}
\end{align}
\end{subequations}
Since the outward normal $\vec{n}$ on $\partial\mathcal{T}$ depends only on $\vec{y}$ and the outward normal $\vec{n}^\epsilon$ on $\partial_{int}\Omega^\epsilon=\partial\mathcal{T}^\epsilon\cap\Omega$ is defined as the $Y$-periodic function $\left.\vec{n}\right|_{\vec{y}=\vec{x}/\epsilon}$, one has
\begin{multline}\label{eq: boundformal}
    \frac{\partial\vec{\Phi}^\epsilon}{\partial\nu_{\tens{D}^\epsilon}} =\left(\tens{E}^\epsilon\cdot\frac{\mathrm{d}\vec{\Phi}^\epsilon}{\mathrm{d}\vec{x}}+\tens{D}^\epsilon\vec{\Phi}^\epsilon\right)\cdot\vec{n}^\epsilon\\
    =\left(\frac{1}{\epsilon}\tens{E}\cdot\nabla_{\vec{y}}\vec{\Phi}+\tens{E}\cdot\nabla_{\vec{x}}\vec{\Phi}+\tens{D}\vec{\Phi}\right)\cdot\vec{n}^\epsilon\qquad\qquad\qquad\qquad\qquad\quad\\
    =: \frac{1}{\epsilon} \frac{\partial\vec{\Phi}^\epsilon}{\partial\nu_{\tens{E}}}+\frac{\partial\vec{\Phi}^\epsilon}{\partial\nu_{\tens{D}}}.\qquad\qquad\qquad\qquad\qquad\qquad\qquad\qquad\qquad\qquad\!
\end{multline}
Inserting (\ref{eq: formalV}), (\ref{eq: formalU}), (\ref{eq: Aformal}) - (\ref{eq: boundformal}) into the Neumann problem (\ref{eq: neumann-operator}) and expanding the full problem into powers of $\epsilon$, we obtain the following auxilliary systems:
\begin{equation}\label{eq: A0sys}
    \begin{dcases}
    \mathcal{A}^0\vec{V}^0=0&\text{ in }(0,T)\times\Omega\times Y^*,\cr
    \frac{\partial \vec{V}^0}{\partial\nu_{\tens{E}}} = 0&\text{ on }(0,T)\times\Omega\times\partial\mathcal{T},\cr
    \vec{V}^0=0&\text{ on }(0,T)\times\partial\Omega\times Y^*,\cr
    \vec{V}^0\quad\text{ $Y$-periodic,}
    \end{dcases}
\end{equation}
\begin{equation}\label{eq: A1sys}
    \begin{dcases}
    \mathcal{A}^0\vec{V}^1=-\mathcal{A}^1 \vec{V}^0&\text{ in }(0,T)\times\Omega\times Y^*,\cr
    \frac{\partial \vec{V}^1}{\partial\nu_{\tens{E}}} = -\frac{\partial \vec{V}^0}{\partial\nu_{\tens{D}}}&\text{ on }(0,T)\times\Omega\times\partial\mathcal{T},\cr
    \vec{V}^1=0&\text{ on }(0,T)\times\partial\Omega\times Y^*,\cr
    \vec{V}^1\quad\text{ $Y$-periodic,}
    \end{dcases}
\end{equation}
\begin{equation}\label{eq: A2sys}
    \begin{dcases}
    \mathcal{A}^0\vec{V}^2=-\mathcal{A}^1 \vec{V}^1-\mathcal{A}^2\vec{V}^0+\vec{H}+\mathcal{H}^0\vec{U}^0&\text{ in }(0,T)\times\Omega\times Y^*,\cr
     \frac{\partial \vec{V}^2}{\partial\nu_{\tens{E}}} = -\frac{\partial \vec{V}^1}{\partial\nu_{\tens{D}}}&\text{ on }(0,T)\times\Omega\times\partial\mathcal{T},\cr
     \vec{V}^2=0&\text{ on }(0,T)\times\partial\Omega\times Y^*,\cr
    \vec{V}^2\quad\text{ $Y$-periodic.}
    \end{dcases}
\end{equation}
For $i\geq3$, we have
\begin{equation}\label{eq: A3sys}
    \begin{dcases}
    \mathcal{A}^0\vec{V}^i=-\mathcal{A}^1\vec{V}^{i-1}-\mathcal{A}^2\vec{V}^{i-2}&\text{ in }(0,T)\times\Omega\times Y^*,\cr
    \qquad\qquad\qquad\qquad\,+\mathcal{H}^0\vec{U}^{i-2}+\mathcal{H}^1\vec{U}^{i-3}\cr
     \frac{\partial \vec{V}^i}{\partial\nu_{\tens{E}}} = -\frac{\partial \vec{V}^{i-1}}{\partial\nu_{\tens{D}}}&\text{ on }(0,T)\times\Omega\times\partial\mathcal{T},\cr
     \vec{V}^i=0&\text{ on }(0,T)\times\partial\Omega\times Y^*,\cr
    \vec{V}^i\quad\text{ $Y$-periodic.}
    \end{dcases}
\end{equation}
Furthermore, we have
\begin{equation}\label{eq: L0sys}
    \begin{dcases}
    \mathcal{L}\vec{U}^0=\tens{G}\vec{V}^0&\text{ in }(0,T)\times\Omega\times Y^*,\cr
    \vec{U}^0 = \vec{U}^*&\text{ in } \{0\}\times\Omega\times Y^*,\cr
    \vec{U}^0\quad\text{ $Y$-periodic,}
    \end{dcases}
\end{equation}
and, for $j\geq1$,
\begin{equation}\label{eq: L1sys}
    \begin{dcases}
    \mathcal{L}\vec{U}^j=\tens{G}\vec{V}^j&\text{ in }(0,T)\times\Omega\times Y^*,\cr
    \vec{U}^j = 0&\text{ in } \{0\}\times\Omega\times Y^*,\cr
    \vec{U}^j\quad\text{ $Y$-periodic.}
    \end{dcases}
\end{equation}
The existence and uniqueness of weak solutions of the systems (\ref{eq: A0sys}) - (\ref{eq: A3sys}) is stated in the following Lemma:
\begin{lemma}\label{l: simple}
Let $F\in L^2(Y^*)$ and $g\in L^2(\partial \mathcal{T})$ be $Y$-periodic. Let $\tens{A}(y)\in L^\infty_\#(Y^*)^{N\times N}$ satisfy $\sum\limits_{i,j=1}^n\tens{A}_{ij}(y)\xi_i\xi_j\geq a\sum\limits_{i=1}^n\xi_i^2$ for all $\vec{\xi}\in\mathbf{R}^n$ for some $a>0$.\\
Consider the following boundary value problem for $\omega(\vec{y})$:
\begin{equation}
\begin{dcases}
-\nabla_{\vec{y}}\cdot\left(\tens{A}(\vec{y})\cdot\nabla_{\vec{y}}\omega\right)= F(\vec{y})&\text{ on }Y^*,\cr
-\left[\tens{A}(\vec{y})\nabla_{\vec{y}}\omega\right]\cdot\vec{n} = g(\vec{y})&\text{ on }\partial\mathcal{T},\cr
\omega\text{ is }Y\text{-periodic}.\cr
\end{dcases}\label{eq: simple}
\end{equation}
Then the following statements hold:
\begin{itemize}
    \item[(i)] There exists a weak $Y$-periodic solution $\omega\in H^1_\#(Y^*)/\mathbf{R}$ to (\ref{eq: simple}) if and only if $\int_{Y^*}F(\vec{y})\mathrm{d}\vec{y} = \int_{\partial \mathcal{T}}g(\vec{y})\mathrm{d}\sigma_y$.
    \item[(ii)] If (i) holds, then the uniqueness of weak solutions is ensured up to an additive constant.
\end{itemize}
\end{lemma}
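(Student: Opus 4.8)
The plan is to recognize Lemma \ref{l: simple} as a standard Neumann-type (Fredholm-alternative) problem on the bounded, connected, perforated cell $Y^*$, and to settle both claims at once by applying Lax--Milgram on the quotient space $H^1_\#(Y^*)/\mathbf{R}$, where the associated bilinear form is coercive thanks to a Poincar\'e--Wirtinger inequality.

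First I would fix the weak formulation. Multiplying (\ref{eq: simple}) by a test function $\varphi$, integrating over $Y^*$, integrating by parts, using $Y$-periodicity to cancel the contribution of the outer boundary $\partial Y$, and inserting the Neumann condition on $\partial\mathcal{T}$, one is led to: find $\omega\in H^1_\#(Y^*)/\mathbf{R}$ such that
\[
\int_{Y^*}(\tens{A}\cdot\nabla_{\vec{y}}\omega)\cdot\nabla_{\vec{y}}\varphi\,\mathrm{d}\vec{y}=\int_{Y^*}F\varphi\,\mathrm{d}\vec{y}-\int_{\partial\mathcal{T}}g\varphi\,\mathrm{d}\sigma_y\qquad\text{for all }\varphi\in H^1_\#(Y^*)/\mathbf{R}.
\]
Write $B(\cdot,\cdot)$ for the left-hand bilinear form and $\ell(\cdot)$ for the right-hand linear functional. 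For the necessity direction in (i), I would simply test with $\varphi\equiv 1$, which lies in $H^1_\#(Y^*)$: then $B(\omega,1)=0$, so any weak solution forces $\ell(1)=\int_{Y^*}F\,\mathrm{d}\vec{y}-\int_{\partial\mathcal{T}}g\,\mathrm{d}\sigma_y=0$, i.e. the stated compatibility identity.

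For sufficiency, assume $\int_{Y^*}F\,\mathrm{d}\vec{y}=\int_{\partial\mathcal{T}}g\,\mathrm{d}\sigma_y$. This is precisely the condition making $\ell$ well defined on the quotient $H^1_\#(Y^*)/\mathbf{R}$, since it annihilates the constants. On that space $\ell$ is bounded: the volume term is controlled by $\|F\|_{L^2(Y^*)}$ and the boundary term by $\|g\|_{L^2(\partial\mathcal{T})}$ together with the trace embedding $H^1(Y^*)\hookrightarrow L^2(\partial\mathcal{T})$, which holds because $\partial\mathcal{T}$ is $C^2$. The form $B$ is bounded by $\|\tens{A}\|_{L^\infty(Y^*)}$, and it is coercive on $H^1_\#(Y^*)/\mathbf{R}$: uniform ellipticity gives $B(\varphi,\varphi)\ge a\|\nabla_{\vec{y}}\varphi\|_{L^2(Y^*)}^2$, and since $Y^*=Y\setminus\overline{\mathcal{T}}$ is bounded, connected and has a $C^2$ (hence Lipschitz) boundary, the Poincar\'e--Wirtinger inequality $\|\varphi-\langle\varphi\rangle\|_{L^2(Y^*)}\le C\|\nabla_{\vec{y}}\varphi\|_{L^2(Y^*)}$ shows that $\|\nabla_{\vec{y}}\cdot\|_{L^2(Y^*)}$ is an equivalent norm on the quotient. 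Lax--Milgram (which does not require symmetry of $\tens{A}$) then produces a unique $\omega\in H^1_\#(Y^*)/\mathbf{R}$ solving the weak problem. This gives existence in (i), and simultaneously (ii): uniqueness holds exactly modulo additive constants, because working in the quotient is the same as fixing the zero-mean representative.

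The genuinely delicate points are essentially bookkeeping: verifying that $Y^*$ is connected so that Poincar\'e--Wirtinger applies with a finite constant (guaranteed here by the construction, which removes a single smooth hole $\overline{\mathcal{T}}$ compactly contained in $Y$), and keeping the outward-normal and sign conventions on $\partial\mathcal{T}$ consistent during the integration by parts so that the compatibility condition emerges with the stated sign. Once these are pinned down, the argument is the textbook Lax--Milgram scheme and I do not expect any further obstacle; in particular no compactness/Fredholm machinery is needed because coercivity is recovered directly on the quotient.
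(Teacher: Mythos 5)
Your argument is correct and is the standard one: reduce to Lax--Milgram on the quotient $H^1_\#(Y^*)/\mathbf{R}$, recover coercivity from uniform ellipticity plus Poincar\'e--Wirtinger on the connected Lipschitz set $Y^*$, obtain necessity of the compatibility condition by testing with $\varphi\equiv1$, and note that solvability modulo constants is exactly what the quotient encodes; your sign bookkeeping on $\partial\mathcal{T}$ (outward normal to $Y^*$) correctly reproduces $\int_{Y^*}F=\int_{\partial\mathcal{T}}g$. The paper itself gives no proof, only a citation to Lemma 2.1 of Muntean and Chalupeck\'y, which follows this same route, so there is nothing substantively different to compare.
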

See Lemma 2.1 in \cite{MunteanChalupecky2011}.\\
$\;$\\
Existence and uniqueness of the solutions of the systems (\ref{eq: L0sys}) and (\ref{eq: L1sys}) can be handled via the application of Rothe's method, see \cite{Rothe1984} for details on Rothe's method, and Gronwall's inequality, and see \cite{Dragomir2003} for various different versions of useful discrete Gronwall's inequalities. \\
$\;$\\
\begin{lemma}\label{l: v0-yindep}
The function $\vec{V}^0$ depends only on $(t,\vec{x})\in(0,T)\times\Omega$.
\end{lemma}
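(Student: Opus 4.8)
The plan is to show that each component $V^0_\alpha$ has a vanishing $\vec{y}$-gradient by reading the cell problem (\ref{eq: A0sys}) as an energy identity. Fix $\alpha\in\{1,\dots,N\}$ and regard $(t,\vec{x})\in(0,T)\times\Omega$ as frozen parameters; by (A1) the matrix $\tens{E}(t,\vec{x},\cdot)$ is sufficiently regular and $Y^*$-periodic, and $V^0_\alpha(t,\vec{x},\cdot)$ lies in the periodic Sobolev space over $Y^*$, hence is an admissible test function for the weak form of (\ref{eq: A0sys}). Testing $\mathcal{A}^0 V^0_\alpha=0$ against $V^0_\alpha$ and integrating over $Y^*$, Green's formula gives
\[
\int_{Y^*}\nabla_{\vec{y}}V^0_\alpha\cdot\big(\tens{E}\cdot\nabla_{\vec{y}}V^0_\alpha\big)\,\mathrm{d}\vec{y}\;-\;\int_{\partial Y^*}V^0_\alpha\,\big(\tens{E}\cdot\nabla_{\vec{y}}V^0_\alpha\big)\cdot\vec{n}\,\mathrm{d}\sigma_{\vec{y}}=0 .
\]

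First I would kill the boundary term. The boundary $\partial Y^*$ splits into $\partial\mathcal{T}$ and the part lying on $\partial Y$: on $\partial\mathcal{T}$ the flux $(\tens{E}\cdot\nabla_{\vec{y}}V^0)\cdot\vec{n}=\partial V^0/\partial\nu_{\tens{E}}$ vanishes by the Neumann condition in (\ref{eq: A0sys}); over $\partial Y$ the contributions of opposite faces cancel because $V^0_\alpha$ and $\tens{E}\cdot\nabla_{\vec{y}}V^0_\alpha$ are $Y$-periodic while the outward normal reverses sign. Thus only the volume term survives, $\int_{Y^*}\nabla_{\vec{y}}V^0_\alpha\cdot(\tens{E}\cdot\nabla_{\vec{y}}V^0_\alpha)\,\mathrm{d}\vec{y}=0$. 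Now I invoke (A2): $\tens{E}$ splits into a skew-symmetric part, contributing nothing to the quadratic form, and a diagonal part $\mathrm{diag}(E_1,\dots,E_d)$ with $E_i\ge e_i>0$, so the identity reads $\int_{Y^*}\sum_{i=1}^d E_i\,|\partial_{y_i}V^0_\alpha|^2\,\mathrm{d}\vec{y}=0$, forcing $\nabla_{\vec{y}}V^0_\alpha=0$ a.e.\ in $Y^*$. Since $Y^*$ is connected, $V^0_\alpha(t,\vec{x},\cdot)$ is constant on $Y^*$; as $\alpha$ and $(t,\vec{x})$ were arbitrary, $\vec{V}^0=\vec{V}^0(t,\vec{x})$.

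A shorter route, available now that Lemma \ref{l: simple} is at hand, is to note that (\ref{eq: A0sys}) is exactly (\ref{eq: simple}) with $\tens{A}=\tens{E}$ (coercive by (A2), since its symmetric part is the positive diagonal matrix), $F\equiv0$ and $g\equiv0$; the compatibility condition of Lemma \ref{l: simple}(i) holds trivially, and by part (ii) the weak $Y$-periodic solution is unique up to an additive constant, so it must coincide with the manifestly existing $\vec{y}$-independent solution. The proof is essentially routine; the only point requiring a little care is justifying the integration by parts together with the cancellation of the periodic boundary terms at the regularity available, which is covered by (A1) and the $C^2$-regularity of $\partial\mathcal{T}$, while the connectedness of $Y^*$ (needed to pass from a vanishing gradient to a constant) is part of the standing geometric assumptions on the cell.
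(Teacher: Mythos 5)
Your proof is correct and is essentially the paper's argument: the paper applies Lemma~\ref{l: simple} to place $\vec{V}^0(t,\vec{x},\cdot)$ in $H^1_\#(Y^*)/\mathbf{R}$ and then obtains $\|\nabla_{\vec{y}}\vec{V}^0\|_{L^2_\#(Y^*)}=0$ by ``direct testing'' of~(\ref{eq: A0sys}) with $\vec{V}^0$, which is exactly the energy identity you carry out in your first route. Your second route (uniqueness in Lemma~\ref{l: simple} plus the manifest $\vec{y}$-independent solution of the homogeneous problem with $F\equiv g\equiv 0$) is a slight logical shortcut that the paper does not spell out, but it rests on the same lemma the paper already invokes and is equivalent in substance; your explicit handling of the skew-symmetric part of $\tens{E}$ via~(A2) and of the periodic/Neumann boundary cancellation is simply a more careful rendering of what the paper compresses into one sentence.
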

\begin{proof}
Applying Lemma \ref{l: simple} to system (\ref{eq: A0sys}) yields the weak solution $\vec{V}^0(t,x,y)\in H^1_\#(Y^*)/\mathbf{R}$ pointwise in $(t,\vec{x})\in(0,T)\times\Omega$ with uniqueness ensured up to an additive function depending only on $(t,\vec{x})\in(0,T)\times\Omega$. Direct testing of (\ref{eq: A0sys}) with $\vec{V}^0$ yields $\|\nabla_{\vec{y}}\vec{V}^0\|_{L^2_\#(Y^*)} = 0$. Hence, $\nabla_{\vec{y}}\vec{V}^0=0$ a.e. in $Y^*$.
\qed\end{proof}
\begin{corollary}\label{c: u0-yindep}
The function $\vec{U}^0$ depends only on $(t,\vec{x})\in(0,T)\times\Omega$.
\end{corollary}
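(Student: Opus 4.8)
\emph{Proof proposal.} The plan is to exploit Lemma~\ref{l: v0-yindep}: once $\vec{V}^0$ is known to be $\vec{y}$-independent, the system (\ref{eq: L0sys}) defining $\vec{U}^0$ becomes, for each fixed $\vec{x}\in\Omega$, a linear first-order ordinary differential equation in $t$ whose coefficient matrix $\tens{L}$, source $\tens{G}\vec{V}^0$ and initial value $\vec{U}^*$ are \emph{all} independent of $\vec{y}$. Uniqueness of solutions to such an ODE then forces $\vec{U}^0$ to inherit this $\vec{y}$-independence.

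Concretely, I would first record that by assumption (A1) the tensors $\tens{L},\tens{G}$ lie in $L^\infty(\mathbf{R}_+;W^{4,\infty}(\Omega))$ and $\vec{U}^*\in W^{4,\infty}(\Omega)$, so none of them depends on $\vec{y}$; combined with Lemma~\ref{l: v0-yindep} this means every datum appearing in (\ref{eq: L0sys}) is a function of $(t,\vec{x})$ only. Next I would differentiate (\ref{eq: L0sys}) with respect to $\vec{y}$ and set $\vec{W}:=\nabla_{\vec{y}}\vec{U}^0$; since $\nabla_{\vec{y}}$ commutes with $\partial_t$ and annihilates $\tens{L}$ and $\tens{G}\vec{V}^0$, one obtains the homogeneous problem $\partial_t\vec{W}+\tens{L}\vec{W}=\vec{0}$ with $\vec{W}|_{t=0}=\nabla_{\vec{y}}\vec{U}^*=\vec{0}$ and $Y$-periodicity in $\vec{y}$. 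Testing with $\vec{W}$ and applying Gronwall's inequality (exactly as in the existence-uniqueness argument for (\ref{eq: L0sys}) via Rothe's method) yields $\vec{W}\equiv\vec{0}$ on $(0,T)\times\Omega\times Y^*$, i.e. $\nabla_{\vec{y}}\vec{U}^0=\vec{0}$, which is the claim. Equivalently, one may argue directly without $\vec{y}$-derivatives: for any two points $\vec{y}_1,\vec{y}_2\in Y^*$ the maps $t\mapsto\vec{U}^0(t,\vec{x},\vec{y}_1)$ and $t\mapsto\vec{U}^0(t,\vec{x},\vec{y}_2)$ solve the same linear ODE with the same initial condition, hence coincide by the uniqueness part of that same theory.

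The only point needing a little care — and the one I would treat as the main (minor) obstacle — is justifying the $\vec{y}$-differentiation, or the pointwise-in-$\vec{y}$ ODE comparison, at the available level of regularity, since a priori $\vec{U}^0$ is produced as a weak/Bochner solution. This is resolved by noting that in (\ref{eq: L0sys}) the $\vec{y}$-dependence of $\vec{U}^0(t,\vec{x},\cdot)$ enters \emph{only} through the datum $\vec{U}^*$ and the source $\tens{G}\vec{V}^0$ via the (linear, $\vec{y}$-independent) solution operator of the ODE, so $\vec{U}^0$ is at least as regular in $\vec{y}$ as $\vec{U}^*$ and $\vec{V}^0$ are — which is ample — and the direct uniqueness argument sidesteps the issue entirely. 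In either presentation the corollary is an immediate consequence of Lemma~\ref{l: v0-yindep}.
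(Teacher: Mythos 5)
Your proposal is correct and follows essentially the same route as the paper: apply $\nabla_{\vec{y}}$ to system (\ref{eq: L0sys}), note that (A1) makes $\tens{L}$, $\tens{G}$, $\vec{U}^*$ $\vec{y}$-independent and Lemma~\ref{l: v0-yindep} makes $\vec{V}^0$ $\vec{y}$-independent, and conclude $\nabla_{\vec{y}}\vec{U}^0=\vec{0}$. The paper leaves the final uniqueness/Gronwall step and the regularity justification implicit, whereas you spell them out; that is a harmless elaboration rather than a different argument.
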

\begin{proof}
Apply the gradient $\nabla_{\vec{y}}$ to system (\ref{eq: L0sys}). The independence of $\vec{y}$ follows directly from (A1) and Lemma \ref{l: v0-yindep}. \qed
\end{proof}
$\;$\\
The application of Lemma \ref{l: simple} to system (\ref{eq: A1sys}) yields, due to the divergence theorem, again a weak solution $\vec{V}^1(t,\vec{x},\vec{y})\in H^1_\#(Y^*)/\mathbf{R}$ pointwise in $(t,\vec{x})\in(0,T)\times\Omega$ with uniqueness ensured up to an additive function depending only on $(t,\vec{x})\in(0,T)\times\Omega$. One can determine $\vec{V}^1$ from $\vec{V}^0$ with the use of a decomposition of $V^1$ into products of $\vec{V}^0$ derivatives and so-called cell functions:
\begin{equation}
        \vec{V}^1 = \vec{W}\cdot\nabla_{\vec{x}}\vec{V}^0+\tens{Z}\vec{V}^0+\tilde{\vec{V}}^1\label{eq: cellV1}\\
\end{equation}
with $\nabla_{\vec{y}}\tilde{\vec{V}}^1 = \tens{0}$ and for $\alpha,\beta\in\{1,\ldots,N\}$ and $i\in\{1,\ldots,d\}$ with cell functions
\begin{equation}
    \tens{Z}_{\alpha\beta},W_{i}\in L^\infty(\mathbf{R}_+;W^{2,\infty}(\Omega;C^2_\#(Y^*)/\mathbf{R})).\label{eq: cellregularity}
\end{equation}
Insertion of (\ref{eq: cellV1}) into system (\ref{eq: A1sys}) leads to systems for the cell-functions $\vec{W}$ and $\tens{Z}$:
\begin{equation}\label{eq: cellWsys}
    \begin{dcases}
    \mathcal{A}^0\vec{W}=-\nabla_{\vec{y}}\cdot\tens{E}&\text{ in }Y^*,\cr
    \frac{\partial \vec{W}}{\partial\nu_{\tens{E}}} = -\vec{n}\cdot\tens{E}&\text{ on }\partial \mathcal{T},\cr
    \vec{W}\quad\text{ $Y$-periodic,}\cr
    \frac{1}{|Y|}\int_{Y^*}\vec{W}\mathrm{d}\vec{y} = \vec{0}.
    \end{dcases}
\end{equation}
and
\begin{equation}\label{eq: cellDsys}
    \begin{dcases}
    \mathcal{A}^0\tens{Z}=-\nabla_{\vec{y}}\cdot\tens{D}&\text{ in }Y^*,\cr
    \frac{\partial \tens{Z}}{\partial\nu_{\tens{E}}} = -\vec{n}\cdot\tens{D}&\text{ on }\partial \mathcal{T},\cr
    \tens{Z}_{\alpha\beta}\quad\text{ $Y$-periodic,}\cr
    \frac{1}{|Y|}\int_{Y^*}\tens{Z}\mathrm{d}\vec{y} = \tens{0}.
    \end{dcases}
\end{equation}
Again the existence and uniqueness up to an additive constant of the cell functions in systems (\ref{eq: cellWsys}) and (\ref{eq: cellDsys}) follow from Lemma \ref{l: simple} and convenient applications of the divergence theorem. The regularity of solutions follows from Theorem 9.25 and Theorem 9.26 in \cite{Brezis2010}.\\
$\;$\\
The existence and uniqueness for $\vec{V}^2$ follows from applying Lemma \ref{l: simple} to system (\ref{eq: A2sys}), which states that a solvability condition has to be satisfied. This solvability condition is the upscaled version of (\ref{eq: Aeps-sys}), the spatial partial differential equation for $\vec{V}^0$:
\begin{equation}    \label{eq: upscaledV}
\overline{\tens{M}}\vec{V}^0-\nabla_{\vec{x}}\cdot\left(\tens{E}^*\cdot\nabla_{\vec{x}}\vec{V}^0+\tens{D}^*\vec{V}^0\right) = \overline{\vec{H}}+\overline{\tens{K}}\vec{U}^0,
\end{equation}
where we have used (\ref{eq: cellV1}), the cell function decomposition, and the new short-hand notation
\begin{subequations}
\begin{align}
    \tens{E}^* &= \frac{1}{|Y|}\int_{Y^*}\tens{E}\cdot\left(\tens{1}+\nabla_{\vec{y}}\vec{W}\right)\mathrm{d}\vec{y},\label{eq: E*}\\
    \tens{D}^* &= \frac{1}{|Y|}\int_{Y^*}\tens{D}+\tens{E}\cdot\nabla_{\vec{y}}\tens{Z}\mathrm{d}\vec{y}.\label{eq: D*}
\end{align}
\end{subequations}
\begin{lemma}\label{l: u0v0-sys}
The pair $(\vec{U}^0,\vec{V}^0)\in H^1((0,T)\times\Omega)\times L^\infty((0,T);H^1_0(\Omega))$ are weak solutions to the following system
\begin{equation}    \label{eq: upscaledUV}
\begin{dcases}
\overline{\tens{M}}\vec{V}^0-\nabla_{\vec{x}}\cdot\left(\tens{E}^*\cdot\nabla_{\vec{x}}\vec{V}^0+\tens{D}^*\vec{V}^0\right) = \overline{\vec{H}}+\overline{\tens{K}}\vec{U}^0&\text{in }(0,T)\times\Omega,\cr
\frac{\partial \vec{U}^0}{\partial t}+\tens{L}\vec{U}^0=\tens{G}\vec{V}^0&\text{in }(0,T)\times\Omega,\cr
\vec{V}^0 = \vec{0}&\text{on }(0,T)\times\partial\Omega,\cr
\vec{U}^0 = \vec{U}^*&\text{on }\{0\}\times\Omega.
\end{dcases}
\end{equation}
\end{lemma}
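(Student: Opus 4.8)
The statement is in large part a bookkeeping assembly of the auxiliary systems (\ref{eq: A0sys})--(\ref{eq: L1sys}) and the cell problems derived above, but it hides two genuine tasks: identifying the first equation of (\ref{eq: upscaledUV}) with the solvability condition of (\ref{eq: A2sys}), and upgrading the regularity of the limit pair $(\vec{U}^0,\vec{V}^0)$. The plan is as follows. First I would treat the elliptic equation for $\vec{V}^0$. By Lemma \ref{l: simple}(i), system (\ref{eq: A2sys}) admits a $Y$-periodic weak solution $\vec{V}^2\in H^1_\#(Y^*)/\mathbf{R}$, pointwise in $(t,\vec{x})\in(0,T)\times\Omega$, if and only if the compatibility relation $\int_{Y^*}F\,\mathrm{d}\vec{y}=\int_{\partial\mathcal{T}}g\,\mathrm{d}\sigma_y$ holds, with $F=-\mathcal{A}^1\vec{V}^1-\mathcal{A}^2\vec{V}^0+\vec{H}+\mathcal{H}^0\vec{U}^0$ and $g$ the Neumann datum on $\partial\mathcal{T}$ prescribed in (\ref{eq: A2sys}). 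I would expand $F$ via (\ref{eq: A1})--(\ref{eq: A2}) and (\ref{eq: H0}), substitute the cell decomposition (\ref{eq: cellV1}) for $\vec{V}^1$ together with the cell problems (\ref{eq: cellWsys})--(\ref{eq: cellDsys}) and the interface data of (\ref{eq: A1sys}), and integrate by parts in $\vec{y}$ over $Y^*$. Using Lemma \ref{l: v0-yindep} and Corollary \ref{c: u0-yindep} (so that $\nabla_{\vec{y}}\vec{V}^0=\nabla_{\vec{y}}\vec{U}^0=\tens{0}$), the boundary integrals over $\partial\mathcal{T}$ produced by the $\nabla_{\vec{y}}\cdot$-terms cancel against $\int_{\partial\mathcal{T}}g$, and the residual volume terms reorganize into $\overline{\tens{M}}\vec{V}^0-\nabla_{\vec{x}}\cdot(\tens{E}^*\cdot\nabla_{\vec{x}}\vec{V}^0+\tens{D}^*\vec{V}^0)-\overline{\vec{H}}-\overline{\tens{K}}\vec{U}^0$ with $\tens{E}^*,\tens{D}^*$ exactly as in (\ref{eq: E*})--(\ref{eq: D*}); this is precisely (\ref{eq: upscaledV}). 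The homogeneous Dirichlet condition $\vec{V}^0=\vec{0}$ on $(0,T)\times\partial\Omega$ is read off directly from the third line of (\ref{eq: A0sys}).

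The evolution equation for $\vec{U}^0$ is then immediate: by Corollary \ref{c: u0-yindep} the operator $\mathcal{L}$ in (\ref{eq: L0sys}) reduces to $\partial_t+\tens{L}$ acting on functions of $(t,\vec{x})$ only and the $Y^*$-average on the right is trivial, so (\ref{eq: L0sys}) becomes $\partial_t\vec{U}^0+\tens{L}\vec{U}^0=\tens{G}\vec{V}^0$ on $(0,T)\times\Omega$ with $\vec{U}^0(0,\cdot)=\vec{U}^*$. Combining this with the first step and testing against $\vec{\phi}\in H^1_0(\Omega)^N$ (after one integration by parts in $\vec{x}$) and $\vec{\psi}\in L^2(\Omega)^N$ yields the weak form of (\ref{eq: upscaledUV}).

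For the regularity, I would first observe that assumption (A2) makes the symmetric part of $\tens{E}$ uniformly positive definite, so testing the cell problem (\ref{eq: cellWsys}) with $\vec{W}$ shows that the symmetric part of $\tens{E}^*$ retains a strictly positive lower bound; the elliptic operator in (\ref{eq: upscaledV}) is then coercive on $H^1_0(\Omega)^N$ uniformly in $t$, the lower-order $\tens{D}^*$- and $\overline{\tens{M}}$-terms being absorbed exactly as in the proof of Lemma \ref{H: l: : apriori} (using (A3) for the $Y^*$-averaged coefficients, cf. Remark \ref{r: A3}). Lax--Milgram then gives, for a.e.\ $t$, $\|\vec{V}^0(t)\|_{H^1_0(\Omega)^N}\le C(1+\|\vec{U}^0(t)\|_{L^2(\Omega)^N})$. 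Inserting this into the evolution equation and running Rothe's method together with Gronwall's inequality as in Theorem \ref{H: t: existuniq} — including differentiation of the evolution equation in $\vec{x}$, which is admissible since $\tens{L},\tens{G}\in L^\infty(\mathbf{R}_+;W^{4,\infty}(\Omega))$ and $\vec{U}^*\in W^{4,\infty}(\Omega)$ by (A1) — bounds $\vec{U}^0$ in $L^\infty((0,T);H^1(\Omega))^N$ and $\partial_t\vec{U}^0$ in $L^2((0,T)\times\Omega)^N$, hence $\vec{U}^0\in H^1((0,T)\times\Omega)^N$, and the estimate for $\vec{V}^0$ upgrades to $L^\infty((0,T);H^1_0(\Omega))^N$. (Alternatively, steps 1--2 already exhibit $(\vec{U}^0,\vec{V}^0)$ as a weak solution of the linear system $(\mathbf{P}^0_w)$, which by uniqueness must coincide with the two-scale limit $(\vec{u},\vec{v})$ of Theorem \ref{H: t: upscale}, whose regularity is the one claimed.)

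The hard part will be the $\vec{y}$-integration by parts over the perforated cell in the first step: one must verify that the interior boundary contributions on $\partial\mathcal{T}$ coming from the $\nabla_{\vec{y}}\cdot$-terms match, with the correct signs, the Neumann data inherited from (\ref{eq: A1sys}), and that what is left collapses precisely onto the effective tensors (\ref{eq: E*})--(\ref{eq: D*}) rather than some variant. I expect the delicate point to be tracking the skew-symmetric parts of $\tens{M}$ and $\tens{E}$ permitted by (A2): they must not contaminate the identification of $\tens{E}^*$ and $\tens{D}^*$ here, and in the coercivity argument of the regularity step only the symmetric part carries the ellipticity.
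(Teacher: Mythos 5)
Your main line of argument is correct and essentially the same as the paper's: the solvability (Fredholm) condition of Lemma \ref{l: simple}(i) applied to (\ref{eq: A2sys}), after substituting the cell decomposition (\ref{eq: cellV1}) and integrating by parts in $\vec{y}$, yields exactly (\ref{eq: upscaledV}) with $\tens{E}^*,\tens{D}^*$ as in (\ref{eq: E*})--(\ref{eq: D*}) --- the paper performs this derivation in the text immediately preceding the lemma and simply cites (\ref{eq: upscaledV}) in the proof --- and the evolution equation then drops out of (\ref{eq: L0sys}) via Corollary \ref{c: u0-yindep}. Your regularity argument (Lax--Milgram for the elliptic equation at fixed $t$, then Rothe/Gronwall for the ODE, differentiating in $\vec{x}$ using the (A1) regularity of $\tens{L},\tens{G},\vec{U}^*$) mirrors the paper's appeal to the proof of Theorem \ref{H: t: existuniq} and to testing the ODE and its spatial gradient with $\partial_t\vec{U}^0$ and $\partial_t\nabla\vec{U}^0$.

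One caution concerning your parenthetical \emph{alternative}: concluding the regularity by identifying $(\vec{U}^0,\vec{V}^0)$ with the two-scale limit $(\vec{u},\vec{v})$ of Theorem \ref{H: t: upscale} would be circular. That identification is the content of Theorem \ref{t: it-is-u0}, whose proof explicitly invokes the present Lemma \ref{l: u0v0-sys} to assert that $(\vec{u},\vec{v})$ and $(\vec{U}^0,\vec{V}^0)$ solve the same linear boundary value problem. The direct a priori route you present as primary is the one that actually closes the proof; the alternative must be dropped.
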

\begin{proof}
From system (\ref{eq: A0sys}), equation (\ref{eq: upscaledV}), $\nabla_{\vec{y}}\vec{V}^0=\tens{0}$, assumption (A3) and system (\ref{eq: L0sys}), we see that $\nabla_{\vec{y}}\vec{U}^0=\tens{0}$. This leads automatically to system (\ref{eq: upscaledUV}), since there is no $\vec{y}$-dependence and $\Omega^\epsilon\subset\Omega$, $\Omega^\epsilon\rightarrow\Omega$, $\partial_{ext}\Omega^\epsilon=\partial\Omega$. Analogous to the proof of Theorem \ref{H: t: existuniq} we obtain the required spatial regularity. Moreover, by testing the second line with $\frac{\partial}{\partial t}\vec{U}^0$, applying a gradient to the second line and testing it with $\frac{\partial}{\partial t}\nabla\vec{U}^0$, we obtain the required temporal regularity as well.
\qed\end{proof}

\subsection{Combining two-scale convergence and asymptotic expansions}
\begin{theorem}\label{t: it-is-u0}
Let (A1)-(A3) be valid, then $(\vec{u},\vec{v}) = (\vec{U}^0,\vec{V}^0)$.
\end{theorem}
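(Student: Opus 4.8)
The plan is to identify $(\vec{u},\vec{v})$ and $(\vec{U}^0,\vec{V}^0)$ as weak solutions of one and the same linear limit problem and then to invoke uniqueness for that problem. By Theorem \ref{H: t: upscale} the pair $(\vec{u},\vec{v})$ solves $(\mathbf{P}^0_w)$, while by Lemma \ref{l: u0v0-sys} the pair $(\vec{U}^0,\vec{V}^0)$ solves (\ref{eq: upscaledUV}); both systems carry the same barred coefficients $\overline{\tens{M}},\overline{\vec{H}},\overline{\tens{K}},\tens{L},\tens{G}$, the same initial datum $\vec{U}^*$, the same homogeneous Dirichlet condition on $\partial\Omega$, and effective tensors $\tens{E}^*,\tens{D}^*$ written via (\ref{eq: *matrices}) respectively (\ref{eq: E*})-(\ref{eq: D*}). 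So the first step is to check that the cell tensors $\vec{W},\tens{Z}$ feeding these formulae agree in both derivations. This is immediate once the weak cell identities (\ref{H: eq:  cell1})-(\ref{H: eq:  cell2}) are put in strong form: since $\mathcal{A}^0\vec{W}=-\nabla_{\vec{y}}\cdot(\tens{E}\cdot\nabla_{\vec{y}}\vec{W})$, the requirement $\nabla_{\vec{y}}\cdot(\tens{E}\cdot(\tens{1}+\nabla_{\vec{y}}\vec{W}))=\vec{0}$ together with $Y$-periodicity and the Neumann data on $\partial\mathcal{T}$ is exactly (\ref{eq: cellWsys}), and similarly for $\tens{Z}$ and (\ref{eq: cellDsys}); by Lemma \ref{l: simple} each of these problems has a solution in $H^1_\#(Y^*)/\mathbf{R}$, unique once the additive constant is fixed by the zero-mean normalisation. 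Hence $(\mathbf{P}^0_w)$ and (\ref{eq: upscaledUV}) are literally the same problem.

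Second, I would establish uniqueness of the weak solution of this common problem. The difference $(\vec{u},\vec{v})$ of two solutions satisfies the same system with source $\overline{\vec{H}}$ and initial datum $\vec{U}^*$ replaced by zero. Testing the elliptic equation with $\vec{\phi}=\vec{v}$, using that (A3) passes to a coercivity inequality for $\overline{\tens{M}},\tens{E}^*,\tens{D}^*$ on $H^1_0(\Omega)^N$ by Remark \ref{r: A3}, and absorbing the $\tens{D}^*$-contribution by Young's inequality exactly as in Lemma \ref{H: l: : apriori}, gives $\|\vec{v}\|_{H^1_0(\Omega)^N}^2\leq C\|\vec{u}\|_{L^2(\Omega)^N}^2$. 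Testing the evolution equation with $\vec{\psi}=\vec{u}$ and inserting this bound yields $\frac{d}{dt}\|\vec{u}\|_{L^2(\Omega)^N}^2\leq C'\|\vec{u}\|_{L^2(\Omega)^N}^2$; since $\vec{u}(0)=\vec{0}$, Gronwall's inequality forces $\vec{u}\equiv\vec{0}$, and then $\vec{v}\equiv\vec{0}$, on $(0,T)$. This is a verbatim repetition of the \textit{a priori} machinery behind Theorem \ref{H: t: existuniq}.

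Combining the two steps, $(\vec{u},\vec{v})=(\vec{U}^0,\vec{V}^0)$; moreover, since the limit is unique, the whole family $(\hat{\vec{U}}^\epsilon,\hat{\vec{V}}^\epsilon)$ — not merely the subsequence $\epsilon'$ of Lemma \ref{H: l: : two-scale-conv} — two-scale converges to $(\vec{U}^0,\vec{V}^0)$. I expect the only genuine subtlety to be the matching of the two cell-problem formulations and the consequent identification of $\tens{E}^*,\tens{D}^*$: this is a well-definedness and bookkeeping point rather than an analytic difficulty, whereas the uniqueness estimate itself is routine under (A1)-(A3).
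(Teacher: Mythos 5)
Your overall strategy — same problem, then uniqueness by energy/Gronwall — is exactly the paper's, but there is a gap in the step where you claim coercivity of the effective bilinear form. You write that ``(A3) passes to a coercivity inequality for $\overline{\tens{M}},\tens{E}^*,\tens{D}^*$ on $H^1_0(\Omega)^N$ by Remark \ref{r: A3}.'' That remark only says the microscopic drift bound (\ref{H: eq:  ineq}) persists for the naive $Y^*$-averages $\overline{\tens{M}}$, $\overline{\tens{E}}$, $\overline{\tens{D}}$. The effective tensors $\tens{E}^*$ and $\tens{D}^*$ are \emph{not} naive averages: they contain the cell correctors $\vec{W}$ and $\tens{Z}$ through (\ref{eq: E*})--(\ref{eq: D*}), and the quantitative balance (A3) between drift, diffusion and reaction need not be preserved by the homogenized tensors. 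So the inequality $\|\vec{v}\|_{H^1_0}^2\le C\|\vec{u}\|_{L^2}^2$ is not justified as written.

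The paper avoids this issue rather than solve it directly. Using the cell problems it proves the symmetrized corrector identities (\ref{eq: WD=Edel})--(\ref{eq: E*new}), then, after subtracting the two systems and testing with $\tilde{\vec{V}}$, moves the $Y^*$-integral outside the $\Omega$-integral. The resulting identity (\ref{eq: zetasys}) has exactly the structure of the \emph{microscopic} bilinear form in $\tens{M},\tens{E},\tens{D},\tens{K}$ acting on $\tilde{\vec{V}}$ with the gradient replaced by $\vec{\zeta}=(\tens{1}+\nabla_{\vec{y}}\vec{W})\cdot\nabla_{\vec{x}}\tilde{\vec{V}}$; therefore (A3) in its original form can be applied pointwise in $\vec{y}$. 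The Gronwall step then kills $\tilde{\vec{U}}$, and coercivity gives $\tilde{\vec{V}}=0$ and $\vec{\zeta}=0$. Finally, $\vec{\zeta}=0$ yields $\nabla_{\vec{x}}\tilde{\vec{V}}=0$ only because $\tens{1}+\nabla_{\vec{y}}\vec{W}$ has trivial $Y$-periodic kernel (Proposition 6.12 in \cite{CioranescuDonato1999}), a step you never reach but which is indispensable if one works with $\vec{\zeta}$ rather than $\nabla_{\vec{x}}\tilde{\vec{V}}$. To repair your version you must either prove the effective-coefficient analogue of (A3) (nontrivial) or adopt the paper's route through (\ref{eq: E*new})--(\ref{eq: D*new}) and the injectivity of $\tens{1}+\nabla_{\vec{y}}\vec{W}$.

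Your opening point — that the two weak formulations carry the same effective tensors because the cell problems in Theorem \ref{H: t: upscale} match (\ref{eq: cellWsys})--(\ref{eq: cellDsys}) — is correct and agrees with the paper's implicit assumption. Your final observation about convergence of the whole sequence rather than a subsequence is a valid standard corollary of uniqueness, though not part of this theorem's statement.
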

\begin{proof}
From \textbf{(P$^0_s$)} and Lemma \ref{l: u0v0-sys}, we see that $(\vec{u},\vec{v})$ and $(\vec{U}^0,\vec{V}^0)$ satisfy the same linear boundary value problem. We only have to prove the uniqueness for this boundary value problem.\\
From testing (\ref{eq: cellDsys}) with $\vec{W}$ and (\ref{eq: cellWsys}) with $\tens{Z}$, we obtain the identity
\begin{equation}
    \int_{Y^*}(\nabla_{\vec{y}}\vec{W})^\top\cdot\tens{D}\mathrm{d}\vec{y} = \int_{Y^*}\tens{E}\cdot\nabla_{\vec{y}}\tens{Z}\mathrm{d}\vec{y}.\label{eq: WD=Edel}
\end{equation}
Hence, from (\ref{eq: D*}) we get
\begin{equation}
        \tens{D}^*= \frac{1}{|Y|}\int_{Y^*}\left(\tens{1}+(\nabla_{\vec{y}}\vec{W})\right)^\top\cdot\tens{D}\mathrm{d}\vec{y}.\label{eq: D*new}
\end{equation}
Moreover, testing system (\ref{eq: cellWsys}) with $\vec{W}$ yields the identity
\begin{equation}
        \tens{E}^* = \frac{1}{|Y|}\int_{Y^*}\left(\tens{1}+(\nabla_{\vec{y}}\vec{W})\right)^\top\cdot\tens{E}\cdot\left(\tens{1}+(\nabla_{\vec{y}}\vec{W})\right)\mathrm{d}\vec{y}.\label{eq: E*new}
\end{equation}
We subtract \textbf{(P$^0_s$)} from (\ref{eq: upscaledUV}) and introduce $\tilde{\vec{U}}$, $\tilde{\vec{V}}$ as
\begin{equation}
    \tilde{\vec{U}} = \vec{U}^0-\vec{u}\quad\text{ and }\quad\tilde{\vec{V}} = \vec{V}^0-\vec{v}.\label{eq: utildedef}
\end{equation}
Testing with $\tilde{\vec{V}}$ and putting the $Y^*$-integral outside the $\Omega$-integral, we obtain the equation
\begin{equation}
    0 = \frac{1}{|Y|}\int_{Y^*}\left[\left\langle\tens{M}\tilde{\vec{V}},\tilde{\vec{V}}\right\rangle+\left\langle\tens{E}\cdot\vec{\zeta}+\tens{D}\tilde{\vec{V}},\vec{\zeta}\right\rangle - \left\langle\tens{K}\tilde{\vec{U}},\tilde{\vec{V}}\right\rangle\right]\mathrm{d}\vec{y}, \label{eq: zetasys}
\end{equation}
where
\begin{equation}
    \vec{\zeta} = \left(\tens{1}+(\nabla_{\vec{y}}\vec{W})\right)\cdot\nabla_{\vec{x}}\tilde{\vec{V}}.\label{eq: zetadef}
\end{equation}
This equation is identical to the Neumann problem (\ref{eq: Aeps-sys})-(\ref{eq: Vhole=0}) with $\vec{H}=\vec{0}$, $\tens{J}=\tens{0}$, and replacements $\nabla_{\vec{x}}\vec{V} \rightarrow \vec{\zeta}$, $\vec{U}\rightarrow\tilde{\vec{U}}$ and $\vec{V}\rightarrow\tilde{\vec{V}}$ in (\ref{eq: Aeps-sys}). Moreover, (\ref{eq: Aeps-sys}) is coercive due to assumption (A3). Therefore, we can follow the argument of the proof of Theorem \ref{H: t: existuniq}, but we only use equations (\ref{H: eq:  aprioriV}) and (\ref{eq: L1bound}) with constants $\tilde{H}$ and $\tilde{J}_{i\alpha}$ set to $0$. For some $R>0$, this leads to
\begin{equation}
    \frac{\partial}{\partial t}\|\tilde{\vec{U}}\|_{L^2(\Omega;L^2_\#(Y^*))^N}^2\leq R\|\tilde{\vec{U}}\|_{L^2(\Omega;L^2_\#(Y^*))^N}^2.\label{eq: tildeUineq}
\end{equation}
Applying Gronwall inequality and using the initial value $\tilde{\vec{U}} = \vec{U}^*-\vec{U}^*=\vec{0}$, we obtain $\|\tilde{\vec{U}}\|_{L^2(\Omega;L^2_\#(Y^*))^N}=0$ a.e. in $(0,T)$. By the coercivity, we obtain $\|\tilde{\vec{V}}\|_{L^2(\Omega;L^2_\#(Y^*))^N}=0$ and $\|\vec{\zeta}\|_{L^2(\Omega;L^2_\#(Y^*))^N}=0$.\\
From the proof of Proposition 6.12 in \cite{CioranescuDonato1999}, we see that $\tens{1}+\nabla_{\vec{y}}W$ does not have a kernel that contains non-zero $Y$-periodic solutions. Therefore, $\vec{\zeta}=\vec{0}$ yields $\nabla_{\vec{y}}\tilde{\vec{V}} = \tens{0}$. Thus, we have $\tilde{\vec{U}}=\vec{0}$ in $L^\infty((0,T);L^2(\Omega))^N$ and $\tilde{\vec{V}}=\vec{0}$ in $L^\infty((0,T);H^1_0(\Omega))^N$. Hence, $(\vec{u},\vec{v}) = (\vec{U}^0,\vec{V}^0)$.
\qed\end{proof}
\begin{corollary}\label{H: l: : aprioriHOM} Let $\lambda\geq0$ and $\tilde{\kappa}\geq0$ be as in Theorem \ref{H: t: existuniq}. Then there exists a positive constant $C$ independent of $\epsilon$ such that
\begin{equation}\label{eq: timebound}
    \|\vec{U}^0\|_{H^1(\Omega^\epsilon)^N}(t)\leq Ce^{\lambda t},\qquad \|\vec{V}^0\|_{\mathbb{V}_\epsilon^N}(t)\leq C(1+\tilde{\kappa}e^{\lambda t})
\end{equation}
    holds for $t\geq0$.
\end{corollary}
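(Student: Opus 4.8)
The plan is to observe that, by Lemma \ref{l: u0v0-sys} and Theorem \ref{t: it-is-u0}, the pair $(\vec{U}^0,\vec{V}^0)$ solves the upscaled system (\ref{eq: upscaledUV}) on the whole domain $\Omega$, and that this system has exactly the same structure as the $\epsilon$-problem \textbf{(P$_w^\epsilon$)} --- a coercive spatial elliptic equation for $\vec{V}^0$ coupled with a first-order-in-time evolution equation for $\vec{U}^0$ --- but with the $Y^*$-averaged and homogenized coefficients $\overline{\tens{M}}$, $\tens{E}^*$, $\tens{D}^*$, $\overline{\vec{H}}$, $\overline{\tens{K}}$ and with $\tens{L}$, $\tens{G}$ unchanged. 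Consequently the proof is a line-by-line repetition of the arguments behind Lemma \ref{H: l: : apriori} and Theorem \ref{H: t: existuniq}, carried out over $\Omega$ instead of $\Omega^\epsilon$, and closed by the trivial inclusion $\Omega^\epsilon\subset\Omega$.

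Concretely I would first test the first line of (\ref{eq: upscaledUV}) with $\vec{V}^0$ and use Young's inequality on the non-square terms to obtain an a priori estimate of the form (\ref{H: eq:  aprioriV}), with all norms now taken over $\Omega$ and with the $\tilde{J}_{i\alpha}$-terms absent (the drift $\tilde{\tens{J}}^\epsilon=\epsilon\tens{J}^\epsilon$ vanishes in the limit). The coercivity needed here holds because, by Remark \ref{r: A3}, the averaged tensors $\overline{\tens{M}}$, $\overline{\tens{E}}$, $\overline{\tens{D}}$ still satisfy the balance inequality (\ref{H: eq:  ineq}), and the passage to the effective $\tens{E}^*$, $\tens{D}^*$ is controlled by the energy identities (\ref{eq: D*new})--(\ref{eq: E*new}) and by the triviality of the $Y$-periodic kernel of $\tens{1}+\nabla_{\vec{y}}\vec{W}$ already exploited in the proof of Theorem \ref{t: it-is-u0}. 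Next I would differentiate the second line of (\ref{eq: upscaledUV}) in $\vec{x}$ and test the equation and its $\vec{x}$-gradient with $\vec{U}^0$ and $\nabla\vec{U}^0$ respectively --- admissible by the regularity supplied by Lemma \ref{l: u0v0-sys} and by (A1) on $\tens{L}$, $\tens{G}$ --- to obtain the analogues of (\ref{eq: L1bound})--(\ref{eq: L2bound}) with the \emph{same} constants $L_N$, $L_G$, $G_N$, $G_G$, which are untouched by upscaling since $\tens{L}$, $\tens{G}$ do not depend on $\vec{y}$. Adding these and inserting the a priori estimate gives $\partial_t\|\vec{U}^0\|_{H^1(\Omega)^N}^2\le \vec{J}+I\|\vec{U}^0\|_{H^1(\Omega)^N}^2$ with the same $I$ as in (\ref{eq: Ibound}); Gronwall's inequality \cite[Thm.~1]{Dragomir2003} and $\vec{U}^0(0,\cdot)=\vec{U}^*$ then yield $\|\vec{U}^0\|_{H^1(\Omega)^N}(t)\le Ce^{\lambda t}$ with $\lambda=I/2$, and reinserting this bound into the a priori estimate yields $\|\vec{V}^0\|_{H^1_0(\Omega)^N}(t)\le C(1+\tilde{\kappa}e^{\lambda t})$ with $\tilde{\kappa}$ as in Theorem \ref{H: t: existuniq}. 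Finally, since $\Omega^\epsilon\subset\Omega$ and $\partial_{ext}\Omega^\epsilon=\partial\Omega$, restriction of the integrals gives $\|\vec{U}^0\|_{H^1(\Omega^\epsilon)^N}\le\|\vec{U}^0\|_{H^1(\Omega)^N}$ and $\|\vec{V}^0\|_{\mathbb{V}_\epsilon^N}=\|\nabla\vec{V}^0\|_{L^2(\Omega^\epsilon)^{d\times N}}\le\|\nabla\vec{V}^0\|_{L^2(\Omega)^{d\times N}}$, so the claimed bounds hold with the constants $\lambda$, $\tilde{\kappa}$ of Theorem \ref{H: t: existuniq}.

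The main --- and essentially only --- obstacle is to secure the coercivity of the homogenized spatial operator with a balance gap no worse than in (A3): one must check that averaging over $Y^*$ and the cell-function correction $\tens{1}+\nabla_{\vec{y}}\vec{W}$ do not degrade the strict inequality (\ref{H: eq:  ineq}). For the averaged tensors this is exactly Remark \ref{r: A3}, and the additional passage from $\overline{\tens{E}}$, $\overline{\tens{D}}$ to $\tens{E}^*$, $\tens{D}^*$ reuses the identities (\ref{eq: D*new})--(\ref{eq: E*new}) and the kernel-triviality argument already established for Theorem \ref{t: it-is-u0}. Once that is in place, every remaining step is a verbatim copy of the computations proving Theorem \ref{H: t: existuniq}, so no constant larger than $\lambda$ or $\tilde{\kappa}$ can enter, which is precisely the content of the corollary.
\qed
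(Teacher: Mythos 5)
The paper proves this corollary in two sentences: the bounds of Theorem \ref{H: t: existuniq} hold \emph{uniformly} in $\epsilon$ for the extended sequences $(\hat{\vec{U}}^\epsilon,\hat{\vec{V}}^\epsilon)$, and since two-scale convergence implies weak convergence, the weak lower semicontinuity of the norm carries these bounds over to the limits $(\vec{U}^0,\vec{V}^0)$ with the \emph{same} $\lambda$ and $\tilde{\kappa}$ (only $C$ changes, by the extension constant and the inclusion $\Omega^\epsilon\subset\Omega$). Your proposal takes a genuinely different — and much longer — route: you redo the Gronwall machinery of Lemma \ref{H: l: : apriori} and Theorem \ref{H: t: existuniq} directly on the upscaled system (\ref{eq: upscaledUV}).

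The problem is that this rederivation does not in fact hand you the same $\lambda$ and $\tilde{\kappa}$, which is precisely what the corollary asserts. You correctly identify the coercivity of the homogenized elliptic part as the only obstacle, but the patch you offer — Remark \ref{r: A3} plus identities (\ref{eq: D*new})--(\ref{eq: E*new}) and the kernel triviality of $\tens{1}+\nabla_{\vec{y}}\vec{W}$ — establishes coercivity, not coercivity with the \emph{same} balance constants. Remark \ref{r: A3} concerns only the naive $Y^*$-averages $\overline{\tens{M}}$, $\overline{\tens{E}}$, $\overline{\tens{D}}$, not the effective $\tens{E}^*$, $\tens{D}^*$. And the standard Jensen/Cauchy--Schwarz argument applied to (\ref{eq: E*new}) in a perforated cell gives $\xi^\top\tens{E}^*\xi\geq\frac{|Y^*|}{|Y|}\,e_i|\xi|^2$, i.e. a strictly worse coercivity constant whenever the perforation is nontrivial; similarly $\overline{m}_\alpha$ can only be guaranteed to exceed $\frac{|Y^*|}{|Y|}m_\alpha$. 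Since $\lambda$ is built from $G_M$, which contains $1/\tilde{m}_\alpha$ and $1/\tilde{e}_i$ in the numerator (see (\ref{eq: app GMbound}), (\ref{eq: Ibound})), degraded coercivity inflates $\lambda$. Your proof therefore yields a bound of the right shape but with potentially larger exponents, whereas the corollary requires the exponents of Theorem \ref{H: t: existuniq} themselves. The weak-lower-semicontinuity argument the paper uses sidesteps this entirely, because it transfers the $\epsilon$-level bound verbatim to the limit without ever touching the effective coefficients.
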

\begin{proof}
It is well known that bounded sequences converge weakly, and any weak limit adheres to the same bound. Since two-scale convergence implies weak convergence, the bounds of Theorem \ref{H: t: existuniq} hold for $\vec{U}^0$ and $\vec{V}^0$ as well.
\qed\end{proof}
This concludes the proof of Theorem \ref{t: two-scale}.
\section{Corrector estimates via asymptotic expansions} \label{s: corrector}
It is natural to determine the speed of convergence of the weak solutions $(\vec{U}^\epsilon,\vec{V}^\epsilon)$ to $(\vec{U}^0,\vec{V}^0)$. However, certain boundary effects are expected due to intersection of the external boundary with the perforated periodic cells. It is clear that $\Omega^\epsilon\rightarrow\Omega$ for $\epsilon\downarrow0$, but the boundary effects impact the periodic behavior, which can lead to $\vec{V}^j\neq\vec{0}$ at $\partial_{ext}\Omega^\epsilon$ for $j>0$. Hence, a cut-off function is introduced to remove this potentially problematic part of the domain.\\
Let us again introduce the cut-off function $M_\epsilon$ defined by
\begin{equation}
    \begin{dcases}
    M_\epsilon\in\mathcal{D}(\Omega),\cr
    M_\epsilon = 0 &\text{ if }\mathrm{dist}(\vec{x},\partial\Omega)\leq\epsilon,\cr
    M_\epsilon = 1 &\text{ if }\mathrm{dist}(\vec{x},\partial\Omega)\geq2\epsilon,\cr
    \epsilon\left|\frac{\mathrm{d} M_\epsilon}{\mathrm{d} x_i}\right|\leq C&i\in\{1,\ldots,d\}.
    \end{dcases}\label{eq: cutoff1}
\end{equation}
With this cut-off function defined, we introduce again the error functions
\begin{subequations}
\begin{align}
\vec{\Phi}^\epsilon&=\vec{V}^\epsilon-\vec{V}^0-M_\epsilon(\epsilon \vec{V}^1+\epsilon^2 \vec{V}^2),\label{eq: errorfunct1}\\
\vec{\Psi}^\epsilon&=\vec{U}^\epsilon-\vec{U}^0-M_\epsilon(\epsilon \vec{U}^1+\epsilon^2 \vec{U}^2),\label{eq: errorfunct2}
\end{align}
\end{subequations}
where the $M_\epsilon$ terms are the so-called corrector terms.
\subsection{Preliminaries}
\label{sec: prelim}
The solvability condition for system (\ref{eq: A2sys}) naturally leds to the fact that $(\vec{U}^0,\vec{V}^0)$ has to satisfy system (\ref{eq: upscaledUV}). Similar to solving system (\ref{eq: A1sys}) for $\vec{V}^1$, we handle system (\ref{eq: A2sys}) for $\vec{V}^2$ with a decomposition into cell-functions:
\begin{equation}
        \vec{V}^2 = \vec{P} + \tens{Q}^0\vec{V}^0+\tens{R}^0\vec{U}^0+\tens{Q}^1\cdot\nabla_{\vec{x}}\vec{V}^0+\tens{R}^1\cdot\nabla_{\vec{x}}\vec{U}^0+\tens{Q}^2:\mathrm{D}^2_{\vec{x}}\vec{V}^0\label{eq: cellV2}
\end{equation}
where we have the cell-functions
\begin{equation}
\begin{array}{rcl}
P_{\alpha},R_{\alpha\beta}^0,R_{i\alpha\beta}^1&\in& L^\infty(\mathbf{R}_+;W^{2,\infty}(\Omega^\epsilon;C^3_\#(Y^*))),\cr
Q_{\alpha\beta}^0,Q_{i\alpha\beta}^1&\in& L^\infty(\mathbf{R}_+;W^{2,\infty}(\Omega^\epsilon;C^2_\#(Y^*))),\cr
Q^2_{ij}&\in& L^\infty(\mathbf{R}_+;W^{2,\infty}(\Omega^\epsilon;C^2_\#(Y^*)))
\end{array}\label{eq: regularity3}
\end{equation}
for $\alpha,\beta\in\{1,\ldots,N\}$ and for $i,j\in\{1,\ldots,d\}$, and where
\begin{equation}
    (\tens{Q}^2:\mathrm{D}^2_{\vec{x}}\vec{V}^0)_\alpha := \sum_{i,j=1}^dQ_{ij}\frac{\partial^2 V^0_\alpha}{\partial x_i\partial x_j}.\label{eq: doubleD}
\end{equation}
The cell-functions $\vec{P}$, $\tens{Q}^0$, $\tens{R}^0$, $\tens{Q}^1$, $\tens{R}^1$, $\tens{Q}^2$ satisfy the following systems of partial differential equations, obtained from subtracting (\ref{eq: upscaledV}) from (\ref{eq: A2sys}) and inserting (\ref{eq: cellV2}):
\begin{equation}\label{eq: cellR0sys}
    \begin{dcases}
    \mathcal{A}^0\vec{P}=\vec{H}-\overline{\vec{H}}&\text{ in }Y^*,\cr
    \frac{\partial \vec{P}}{\partial\nu_{\tens{E}}} = \vec{0}&\text{ on }\partial \mathcal{T},\cr
    \vec{P}\quad\text{ $Y$-periodic,}
    \end{dcases}
\end{equation}
\begin{equation}\label{eq: cellR1sys}
    \begin{dcases}
    \mathcal{A}^0\tens{Q}^0=\nabla_{\vec{y}}\cdot\left(\tens{E}\cdot\nabla_{\vec{x}}\tens{Z}\right)+\nabla_{\vec{x}}\cdot\left(\tens{E}\cdot\nabla_{\vec{y}}\tens{Z}\right)+\nabla_{\vec{y}}\cdot\left(\tens{D}\tens{Z}\right)\cr
    \qquad\qquad+\nabla_{\vec{x}}\cdot\left(\tens{D}-\tens{D}^*\right)+\overline{\tens{M}}-\tens{M}&\text{ in }Y^*,\cr
    \frac{\partial \tens{Q}^0}{\partial\nu_{\tens{E}}} = -\left(\tens{D}\tens{Z}+\tens{E}\cdot\nabla_{\vec{x}}\tens{Z}\right)\cdot\vec{n}&\text{ on }\partial \mathcal{T},\cr
    \tens{Q}^0\quad\text{ $Y$-periodic,}
    \end{dcases}
\end{equation}
\begin{equation}\label{eq: cellR2sys}
    \begin{dcases}
    \mathcal{A}^0\tens{R}^0=\tens{K}-\overline{\tens{K}}&\text{ in }Y^*,\cr
    \frac{\partial R^0_{\alpha\beta}}{\partial\nu_{\tens{E}}} = \tens{0}&\text{ on }\partial \mathcal{T},\cr
    \tens{R}^0\quad\text{ $Y$-periodic,}
    \end{dcases}
\end{equation}
\begin{equation}\label{eq: cellR3sys}
    \begin{dcases}
    \mathcal{A}^0\tens{Q}^1=\nabla_{\vec{y}}\cdot(\tens{E}\cdot\nabla_{\vec{x}}\vec{W})\otimes\tens{1}+\nabla_{\vec{y}}\cdot(\tens{E}\otimes\tens{Z})\cr
    \qquad\qquad+\nabla_{\vec{x}}\cdot(\tens{E}\cdot\nabla_{\vec{y}}\vec{W})\otimes\tens{1}+\tens{E}\cdot\nabla_{\vec{y}}\tens{Z}\cr
    \qquad\qquad+\nabla_{\vec{y}}\cdot(\tens{D}\otimes\vec{W})+\nabla_{\vec{x}}\cdot(\tens{E}-\tens{E}^*)\otimes\tens{1}+\tens{D}-\tens{D}^*&\text{ in }Y^*,\cr
    \frac{\partial \tens{Q}^1}{\partial\nu_{\tens{E}}} = \vec{W}\otimes(\tens{D}\cdot\vec{n})+\vec{n}\cdot\left(\tens{E}\otimes\tens{Z}+\tens{E}\cdot\nabla_{\vec{x}}\vec{W}\otimes\tens{1}\right)&\text{ on }\partial \mathcal{T},\cr
    \tens{Q}^1\quad\text{ $Y$-periodic,}
    \end{dcases}
\end{equation}
\begin{equation}\label{eq: cellR4sys}
    \begin{dcases}
    \mathcal{A}^0\tens{R}^1=\tens{0}&\text{ in }Y^*,\cr
    \frac{\partial \tens{R}^1}{\partial\nu_{\tens{E}}} = \tens{0}&\text{ on }\partial \mathcal{T},\cr
    \tens{R}^1\quad\text{ $Y$-periodic,}
    \end{dcases}
\end{equation}
\begin{equation}\label{eq: cellthetasys}
    \begin{dcases}
    \mathcal{A}^0\tens{Q}^2\!=\!\nabla_{\vec{y}}\cdot(\tens{E}\otimes\vec{W})+\tens{E}\cdot\nabla_{\vec{y}}\vec{W}+\tens{E}-\tens{E}^*&\text{ in }Y^*,\cr
    \frac{\partial\tens{Q}^2}{\partial\nu_{\tens{E}}} = -\vec{n}\cdot\tens{E}\otimes \vec{W}&\text{ on }\partial \mathcal{T},\cr
    \tens{Q}^2\quad\text{ $Y$-periodic.}
    \end{dcases}
\end{equation}
The well-posedness of the cell-problems (\ref{eq: cellWsys}) - (\ref{eq: cellthetasys}) is given by Lemma \ref{l: simple}, while the regularity follows from Theorem 9.25 and Theorem 9.26 in \cite{Brezis2010}. Note that cell-problem (\ref{eq: cellR4sys}) yields $\tens{R}^1=\tens{0}$.
\subsection{Proof of Theorem \ref{t: corrector}}
\label{sec: proof1}
Let $C$ denote a constant independent of $\epsilon$, $\vec{x}$, $\vec{y}$ and $t$.\\
We rewrite the error-function $\vec{\Phi}^\epsilon$ as
\begin{equation}\label{eq: phiintro}
    \vec{\Phi}^\epsilon = \vec{V}^\epsilon-\vec{V}^0-M_\epsilon(\epsilon \vec{V}^1+\epsilon^2 \vec{V}^2) = \vec{\phi}^\epsilon+(1-M_\epsilon)(\epsilon \vec{V}^1+\epsilon^2\vec{V}^2),
\end{equation}
where
\begin{equation}\label{eq: phiexp}
    \vec{\phi}^\epsilon = \vec{V}^\epsilon-(\vec{V}^0+\epsilon \vec{V}^1+\epsilon^2\vec{V}^2).
\end{equation}
Similarly, we make use of the error-function $\vec{\Psi}^\epsilon$
\begin{equation}\label{eq: psiintro}
    \vec{\Psi}^\epsilon = \vec{U}^\epsilon-\vec{U}^0-M_\epsilon(\epsilon \vec{U}^1+\epsilon^2\vec{U}^2).
\end{equation}
The goal is to estimate both $\vec{\Phi}^\epsilon$ and $\vec{\Psi}^\epsilon$ uniformly in $\epsilon$.\\
$\;$\\
Even though our problem for $(\vec{U}^\epsilon,\vec{V}^\epsilon)$ is defined on $\Omega^\epsilon$, while the asymptotic expansion terms $(\vec{U}^i,\vec{V}^i)$ are defined on $\Omega\times Y^*$, we are still able to use spaces defined on $\Omega^\epsilon$ such as $\mathbb{V}_\epsilon^N$ since the evaluation $\vec{y}=\vec{x}/\epsilon$ transfers the zero-extension on $\mathcal{T}$ to $\mathcal{T}^\epsilon$.\\
$\;$\\
Introduce the coercive bilinear form $a_\epsilon: \mathbb{V}_\epsilon^N\times\mathbb{V}_\epsilon^N\rightarrow\mathbf{R}$ defined as \begin{equation}\label{eq: a-bilform}
    a_\epsilon(\vec{\psi},\vec{\phi}) = \int_{\Omega^\epsilon}\vec{\phi}^\top \mathcal{A}^\epsilon\vec{\psi}\mathrm{d}\vec{x}
\end{equation}
pointwise in $t\in\mathbf{R}_+$, on which it depends implicitly.\\
By construction, $\vec{\Phi}^\epsilon$ vanishes on $\partial_{ext}\Omega^\epsilon$, which allows for the estimation of $\|\vec{\Phi}^\epsilon\|_{\mathbb{V}_\epsilon^N}$. This estimation follows the standard approach, see \cite{CioranescuStJeanPaulin1998} for the details.\\
First the inequality $|a_\epsilon(\vec{\Phi}^\epsilon,\vec{\phi})|\leq \mathcal{C}(\epsilon,t)\|\vec{\phi}\|_{\mathbb{V}_\epsilon^N}$, where $\mathcal{C}(\epsilon,t)$ is a constant depending on $\epsilon$ and $t\in\mathbb{R}_+$, is obtained for any $\vec{\phi}\in \mathbb{V}_\epsilon^N$.
Second, we take $\vec{\phi} = \vec{\Phi}^\epsilon$ and using the coercivity, one immediately obtains $\|\vec{\Phi}^\epsilon\|_{\mathbb{V}_\epsilon^N}$.\\
Our pseudo-parabolic system complicates this approach. Instead of $\mathcal{C}(\epsilon,t)$, one gets $C\|\vec{\Psi}^\epsilon\|_{H^1_0(\Omega^\epsilon)^N}$. Via an ordinary differential equation for $\vec{\Psi}^\epsilon$, we obtain a temporal inequality for $\|\vec{\Psi}^\epsilon\|_{H^1_0(\Omega^\epsilon)^N}$ that contains $\|\vec{\Phi}^\epsilon\|_{\mathbb{V}_\epsilon^N}$. The upper bound for $\|\vec{\Phi}^\epsilon\|_{\mathbb{V}_\epsilon^N}$ now follows from applying Gronwall's inequality, leading to an upper bound for $\|\vec{\Psi}^\epsilon\|_{H^1_0(\Omega^\epsilon)^N}$.\\
$\;$\\
From equation (\ref{eq: phiintro}), we have
\begin{equation}\label{eq: aexpand}
a_\epsilon(\vec{\Phi}^\epsilon,\vec{\phi}) = a_\epsilon(\vec{\phi}^\epsilon,\vec{\phi})+a_\epsilon((1-M_\epsilon)(\epsilon \vec{V}^1+\epsilon^2\vec{V}^2),\vec{\phi})
\end{equation}
for $\vec{\phi}\in\mathbb{V}_\epsilon^N$.\\
Do note that $M_\epsilon$ vanishes in a neighbourhood of the boundary $\partial_{ext}\Omega^\epsilon$, see (\ref{eq: cutoff1}), because of which the second term in (\ref{eq: aexpand}) vanishes outside this neighbourhood.\\
We start by estimating the first term of (\ref{eq: aexpand}), $a_\epsilon(\vec{\phi}^\epsilon,\vec{\phi})$. From the asymptotic expansion of $\mathcal{A}^\epsilon$, we obtain
\begin{multline}
    \mathcal{A}^\epsilon\vec{\phi}^\epsilon = (\epsilon^{-2}\mathcal{A}^0+\epsilon^{-1}\mathcal{A}^1+\mathcal{A}^2)\vec{\phi}^\epsilon\\
    = \mathcal{A}^\epsilon \vec{V}^\epsilon - \epsilon^{-2}\mathcal{A}^0\vec{V}^0-\epsilon^{-1}(\mathcal{A}^0\vec{V}^1+\mathcal{A}^1\vec{V}^0)-(\mathcal{A}^0\vec{V}^2+\mathcal{A}^1\vec{V}^1+\mathcal{A}^2\vec{V}^0)\\
    -\epsilon(\mathcal{A}^1\vec{V}^2+\mathcal{A}^2\vec{V}^1)-\epsilon^2\mathcal{A}^2\vec{V}^2.\label{eq: Aepsexpansion1}
\end{multline}
Using the definitions of $\mathcal{A}^0$, $\mathcal{A}^1$, $\mathcal{A}^2$, $\vec{V}^0$, $\vec{V}^1$, $\vec{V}^2$, we have
\begin{equation}
    \mathcal{A}^\epsilon \vec{\phi}^\epsilon = \tens{K}^\epsilon \vec{U}^\epsilon-\left.\tens{K}\right|_{\vec{y}=\vec{x}/\epsilon}\vec{U}^0+\epsilon\tens{J}^\epsilon\nabla \vec{U}^\epsilon-\epsilon(\mathcal{A}^2\vec{V}^1+\mathcal{A}^1\vec{V}^2)-\epsilon^2\mathcal{A}^2\vec{V}^2.\label{eq: Aepsexpansion2}
\end{equation}
The function $\vec{\phi}^\epsilon$ satisfies the following boundary condition on $\partial\mathcal{T}^\epsilon$
\begin{equation}
    \frac{\partial\vec{\phi}^\epsilon}{\partial \nu_{\tens{D}^\epsilon}} = -\epsilon^2\frac{\partial \vec{V}^2}{\partial \nu_\tens{D}},\label{eq: phibound}
\end{equation}
as a consequence of the boundary conditions for the $\vec{V}^i$-terms. Hence, $\vec{\phi}^\epsilon$ satisfies the following system:
\begin{equation}
    \begin{dcases}
    \mathcal{A}^\epsilon \vec{\phi}^\epsilon = \vec{f}^\epsilon-\epsilon \vec{g}^\epsilon &\text{ in }\Omega^\epsilon,\cr
    \frac{\partial\vec{\phi}^\epsilon}{\partial \nu_{\tens{D}^\epsilon}}=\epsilon^2\tens{h}^\epsilon\cdot\vec{n}^\epsilon&\text{ on }\partial \mathcal{T}^\epsilon,\cr
    \vec{\phi}^\epsilon=-\epsilon\vec{V}^1-\epsilon^2\vec{V}^2&\text{ on }\partial\Omega.
    \end{dcases}\label{eq: phisystem}
\end{equation}
Testing with $\vec{\phi}^\top\in \mathbb{V}_\epsilon^N$ and performing a partial integration, we obtain
\begin{equation}\label{eq: phiweak}
    a_\epsilon(\vec{\phi}^\epsilon,\vec{\phi}) = \int_{\Omega^\epsilon} \vec{\phi}^\top\vec{f}^\epsilon\mathrm{d}\vec{x}-\int_{\Omega^\epsilon}\epsilon \vec{\phi}^\top\vec{g}^\epsilon\mathrm{d}\vec{x}+\int_{\partial \mathcal{T}^\epsilon}\epsilon^2\vec{\phi}^\top\tens{h}^\epsilon\cdot\vec{n}^\epsilon\mathrm{d}\vec{s},
\end{equation}
where $\vec{f}^\epsilon$, $\vec{g}^\epsilon$ and $\tens{h}^\epsilon$ are given by
\begin{equation}\label{eq: feps}
\vec{f}^\epsilon= \tens{K}^\epsilon \vec{U}^\epsilon-\left.\tens{K}\right|_{\vec{y}=\vec{x}/\epsilon}\vec{U}^0
\end{equation}
\begin{multline}\label{eq: geps}
    \vec{g}^\epsilon = \mathcal{A}^1\left[\vec{P}+\tens{Q}^0\vec{V}^0+\tens{R}^0\vec{U}^0+\tens{Q}^1\cdot\nabla_{\vec{x}}\vec{V}^0+\tens{R}^1\cdot\nabla_{\vec{x}}\vec{U}^0+\tens{Q}^2:\mathrm{D}_{\vec{x}}^2\vec{V}^0\right]\\ +\mathcal{A}^2\left[\vec{W}\cdot\nabla_{\vec{x}}\vec{V}^0+\tens{Z} \vec{V}^0\right]-\tens{J}^\epsilon\cdot\nabla_{\vec{x}}\vec{U}^\epsilon\\
    +\epsilon \mathcal{A}^2\left[\vec{P}+\tens{Q}^0\vec{V}^0+\tens{R}^0\vec{U}^0+\tens{Q}^1\cdot\nabla_{\vec{x}}\vec{V}^0+\tens{R}^1\cdot\nabla_{\vec{x}}\vec{U}^0+\tens{Q}^2:\mathrm{D}_{\vec{x}}^2\vec{V}^0\right],
\end{multline}
\begin{equation}\label{eq: heps}
    \tens{h}^\epsilon = -\frac{\partial}{\partial \nu_{\tens{D}}}\left[\vec{P}+\tens{Q}^0\vec{V}^0+\tens{R}^0\vec{U}^0+\tens{Q}^1\cdot\nabla_{\vec{x}}\vec{V}^0+\tens{R}^1\cdot\nabla_{\vec{x}}\vec{U}^0+\tens{Q}^2:\mathrm{D}_{\vec{x}}^2\vec{V}^0\right].
\end{equation}
Estimates for $\vec{f}^\epsilon$, $\vec{g}^\epsilon$ and $\tens{h}^\epsilon$ follow from estimates on $\vec{V}^0$, $\vec{U}^0$, $\vec{P}$, $\tens{Q}^0$, $\tens{R}^0$, $\tens{Q}^1$, $\tens{R}^1$, $\tens{Q}^2$, and $\vec{W}$. Due to the regularity of $\overline{\vec{H}}$, $\overline{\tens{K}}$, $\tens{J}$, $\tens{G}$, classical regularity results for elliptic systems, see Theorem 8.12 and Theorem 8.13 in \cite{GilbargTrudinger}, quarantee that all spatial derivatives up to the fourth order of $(\vec{U}^0,\vec{V}^0)$ are in $L^\infty(\mathbf{R}_+\times\Omega)$. Similarly, from Theorem 9.25 and Theorem 9.26 in \cite{Brezis2010}, the cell-functions $\vec{W}$, $\vec{P}$, $\tens{Q}^0$, $\tens{R}^0$, $\tens{Q}^1$, $\tens{R}^1$ and $\tens{Q}^2$ have higher regularity, than given by Lemma \ref{l: simple}: $W_i,P_\alpha,Q^0_{\alpha\beta},R^0_{\alpha\beta},Q^1_{i\alpha\beta},R^1_{\alpha\beta},Q^2_{ij}$ are in $L^\infty(\mathbf{R}_+;W^{2,\infty}(\Omega;H^3_\#(Y^*)/\mathbf{R}))$. We denote with $\kappa$ the time-independent bound \begin{equation*}
    \kappa=\sup\limits_{1\leq\alpha,\beta\leq N}\|K_{\alpha\beta}\|_{L^\infty(\mathbf{R}_+;W^{1,\infty}(\Omega;C^1_\#(Y^*)))}.
\end{equation*} Note that $\|\tens{R}^0\|_{L^\infty(\mathbf{R}_+\times\Omega;C^1_\#(Y^*)))^{N\times N}}\leq C\kappa$ by the Poincar\'{e}-Wirtinger inequality.\\
Bounding $\vec{g}^\epsilon$ follows now directly from equation (\ref{eq: geps}) and Corollary \ref{H: l: : aprioriHOM}:
\begin{equation}\label{eq: gn}
\|g^\epsilon_\alpha\|_{L^2(\Omega^\epsilon)^N}\leq C(1+\epsilon)(1+(\kappa+\tilde{\kappa})e^{\lambda t}),
\end{equation}
where $C$ is independent of $\epsilon.$\\
Bounding $\tens{h}^\epsilon$ is more difficult as it is defined on the boundary $\partial\mathcal{T}^\epsilon$. The following result, see Lemma 2.31 on page 47 in \cite{CioranescuStJeanPaulin1998}, gives a trace inequality, which shows that $\tens{h}^\epsilon$ is properly defined.
\begin{lemma}\label{l: lions}
Let $\psi\in H^1(\Omega^\epsilon)$. Then
\begin{equation}
    \|\psi\|_{L^2(\partial \mathcal{T}^\epsilon)}\leq C\epsilon^{-1/2}\|\psi\|_{\mathbb{V}_\epsilon},\label{eq: perforationtrace}
\end{equation}
where $C$ is independent of $\epsilon$.
\end{lemma}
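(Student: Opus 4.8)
The plan is to prove this trace inequality by the classical homogenization device of transplanting a fixed trace estimate on the reference cell $Y^*=Y\setminus\overline{\mathcal{T}}$ onto each $\epsilon$-cell by rescaling and then summing over the cells. Note that the seminorm on the right-hand side is a norm only on $\mathbb{V}_\epsilon$, so the statement is to be read for $\psi\in\mathbb{V}_\epsilon$, which is the case in every application of the lemma in this paper.

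First I would replace $\psi$ by a globally defined function: let $\Psi\in H^1(\mathbf{R}^d)$ be the extension by zero of $\mathcal{P}^\epsilon\psi\in H^1_0(\Omega)$. Then $\Psi=\psi$ a.e.\ on $\Omega^\epsilon$, hence also in the trace sense on $\partial\mathcal{T}^\epsilon$ (which lies in $\Omega$ by (A4)), and Corollary \ref{c: extension} gives $\|\Psi\|_{H^1(\mathbf{R}^d)}=\|\mathcal{P}^\epsilon\psi\|_{H^1(\Omega)}\leq C\|\psi\|_{\mathbb{V}_\epsilon}$ with $C$ independent of $\epsilon$. Second, I would fix once and for all the trace inequality on the $\epsilon$-independent Lipschitz domain $Y^*$,
\begin{equation*}
\|\phi\|_{L^2(\partial\mathcal{T})}^2\leq C_{Y^*}\left(\|\phi\|_{L^2(Y^*)}^2+\|\nabla_{\vec{y}}\phi\|_{L^2(Y^*)}^2\right),\qquad\text{for all }\phi\in H^1(Y^*).
\end{equation*}

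The core step is a change of variables on each cell. By (A4) every hole of the periodic array has its closure contained in $\Omega$ or disjoint from $\overline{\Omega}$, so $\partial\mathcal{T}^\epsilon$ is the disjoint union of the sets $g(\partial\mathcal{T})$ over those $g\in G_\gamma^\epsilon$ with $g(\overline{\mathcal{T}})\subset\Omega$, and the cells $g(Y)$, $g\in G_\gamma^\epsilon$, have pairwise disjoint interiors. For each admissible $g$, division by $\epsilon$ followed by the fixed chart built from $\gamma$ and the appropriate lattice translation is a $C^2$-diffeomorphism of $g(\overline{Y})$ onto $\overline{Y}$ whose volume Jacobian is comparable to $\epsilon^{d}$, whose surface Jacobian on $g(\partial\mathcal{T})$ is comparable to $\epsilon^{d-1}$, and under which $\nabla_{\vec{x}}$ becomes $\epsilon^{-1}$ times a bounded matrix times $\nabla_{\vec{y}}$. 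Applying the reference trace inequality to the rescaled restriction of $\Psi$ and undoing the scaling yields
\begin{equation*}
\int_{g(\partial\mathcal{T})}|\Psi|^2\,\mathrm{d}\sigma\leq C\left(\epsilon^{-1}\int_{g(Y^*)}|\Psi|^2\,\mathrm{d}\vec{x}+\epsilon\int_{g(Y^*)}|\nabla\Psi|^2\,\mathrm{d}\vec{x}\right).
\end{equation*}
Summing over the admissible $g$ and using disjointness gives $\|\Psi\|_{L^2(\partial\mathcal{T}^\epsilon)}^2\leq C(\epsilon^{-1}\|\Psi\|_{L^2(\mathbf{R}^d)}^2+\epsilon\|\nabla\Psi\|_{L^2(\mathbf{R}^d)}^2)$; absorbing the second term into the first via $\epsilon<\epsilon_0$ and inserting $\|\Psi\|_{H^1(\mathbf{R}^d)}\leq C\|\psi\|_{\mathbb{V}_\epsilon}$ gives $\|\psi\|_{L^2(\partial\mathcal{T}^\epsilon)}^2=\|\Psi\|_{L^2(\partial\mathcal{T}^\epsilon)}^2\leq C\epsilon^{-1}\|\psi\|_{\mathbb{V}_\epsilon}^2$, and taking square roots proves the claim. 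Skipping the final extension-based step, the same argument produces the sharper two-term bound $\|\psi\|_{L^2(\partial\mathcal{T}^\epsilon)}\leq C(\epsilon^{-1/2}\|\psi\|_{L^2(\Omega^\epsilon)}+\epsilon^{1/2}\|\nabla\psi\|_{L^2(\Omega^\epsilon)})$ valid for all $\psi\in H^1(\Omega^\epsilon)$.

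The step I expect to require the most care is the uniformity, in $\epsilon$ and across cells, of the constant in the rescaled trace estimate: one must check that the fixed diffeomorphism $\gamma$, together with the lattice structure of $G_0$ and $G_\gamma$, distorts every cell by a uniformly bounded amount, so that the single constant $C_{Y^*}$ controls all cells simultaneously. This is immediate when $Y=(0,1)^d$ and $\gamma=\mathrm{id}$, and is a short bookkeeping argument using the quasi-periodicity of $\gamma$ and the $C^2$-regularity of $\partial\mathcal{T}$ in the general case; it is precisely the content of Lemma 2.31 in \cite{CioranescuStJeanPaulin1998}. The scaling exponents, the summation, and the concluding application of Corollary \ref{c: extension} are otherwise routine.
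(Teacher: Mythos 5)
Your proof is correct and follows the standard rescaling argument that underlies the cited Lemma 2.31 of \cite{CioranescuStJeanPaulin1998}: fix the trace inequality on the reference cell $Y^*$, transport it to each $\epsilon$-cell via the Jacobian bookkeeping you describe, and sum over the periodic array, obtaining the two-term bound $\|\psi\|_{L^2(\partial\mathcal{T}^\epsilon)}^2 \leq C\bigl(\epsilon^{-1}\|\psi\|_{L^2(\Omega^\epsilon)}^2 + \epsilon\|\nabla\psi\|_{L^2(\Omega^\epsilon)}^2\bigr)$, which is precisely what the reference proves. You are also right that the statement as printed cannot hold for arbitrary $\psi\in H^1(\Omega^\epsilon)$ (take $\psi\equiv 1$) because the right-hand side is only a seminorm, and that the intended reading is $\psi\in\mathbb{V}_\epsilon$; that is indeed how the lemma is used throughout the paper.

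The one place you take a mild detour from the route the paper implicitly relies on is the final step. To collapse the two-term bound to $C\epsilon^{-1/2}\|\psi\|_{\mathbb{V}_\epsilon}$ you pass through the extension $\Psi=\mathcal{P}^\epsilon\psi$ and Corollary \ref{c: extension}. That is valid, but it invokes unnecessary machinery: the $\epsilon$-uniform Poincar\'{e} inequality on $\Omega^\epsilon$ already recorded in Remark \ref{r: poincare} gives $\|\psi\|_{L^2(\Omega^\epsilon)}\leq C\|\nabla\psi\|_{L^2(\Omega^\epsilon)}=C\|\psi\|_{\mathbb{V}_\epsilon}$ directly, stays entirely on the perforated domain, and makes the conclusion immediate. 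Apart from that cosmetic difference your argument coincides with the standard one; your closing remarks on the uniformity of the rescaled trace constant across cells (a consequence of the fixed diffeomorphism $\gamma$ and the lattice structure) and on the role of (A4) in keeping the holes away from $\partial\Omega$ are exactly the points that deserve emphasis.
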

By (\ref{eq: heps}), the regularity of the cell-functions, the regularity of the normal at the boundary, Corollary \ref{H: l: : aprioriHOM} and using Lemma \ref{l: lions} twice, we have
\begin{equation}\label{eq: hn}
    \left|\int_{\partial \mathcal{T}^\epsilon}\epsilon^2\vec{\phi}^\top\tens{h}^\epsilon\cdot\vec{n}^\epsilon\mathrm{d}s(\vec{x})\right|\leq C\epsilon(1+(\kappa+\tilde{\kappa})e^{\lambda t})\|\vec{\phi}\|_{\mathbb{V}_\epsilon^N}.
\end{equation}
We estimate $\vec{f}^\epsilon$ in $L^2(\Omega^\epsilon)^N$ from the standard inequality $|a_1b_1-a_2b_2|\leq|a_1-a_2||b_2|+|a_1||b_1-b_2|$ for all $a_1,a_2,b_1,b_2\in\mathbf{R}$. This leads to
\begin{multline}
    \|\vec{f}^\epsilon\|_{L^2(\Omega^\epsilon)^N}\leq \|\tens{K}^\epsilon-\tens{K}\|_{L^2(\Omega^\epsilon)^{N\times N}}\|\vec{U}^\epsilon\|_{L^\infty(\Omega^\epsilon)^N}\\
    +\|\tens{K}\|_{L^\infty(\Omega^\epsilon)^{N\times N}}\|\vec{U}^\epsilon-\vec{U}^0\|_{L^2(\Omega^\epsilon)^N}.\label{eq: Ksplit}
\end{multline}
With this inequality, the estimation depends on the convergence of $\tens{K}^\epsilon$ and $\vec{U}^\epsilon$ to $\tens{K}$ and $\vec{U}^0$, respectively,
but with the notation according to (\ref{eq: notation}) we have $\tens{K}^\epsilon-\left.\tens{K}\right|_{\vec{y}=\vec{x}/\epsilon}=\tens{0}$ a.e.\\
From the definition of $\vec{\Psi}^\epsilon$, we obtain
\begin{multline}
\|\vec{U}^\epsilon-\vec{U}^0\|_{L^2(\Omega^\epsilon)^N} = \|\vec{\Psi}^\epsilon+M_\epsilon(\epsilon \vec{U}^1+\epsilon^2 \vec{U}^2)\|_{L^2(\Omega^\epsilon)^N}\\
\leq \|\vec{\Psi}^\epsilon\|_{L^2(\Omega^\epsilon)^N}+\epsilon\|\vec{U}^1\|_{L^2(\Omega^\epsilon)^N}+\epsilon^2\|\vec{U}^2\|_{L^2(\Omega^\epsilon)^N}.\label{eq: Ucorrbound}
\end{multline}
Introduce the notations $l=L_N$ and $t_l=\min\{1/l,t\}$. Using system (\ref{eq: L1sys}), the bounds $C(1+(\kappa+\tilde{\kappa})e^{\lambda t})$ for $\|\vec{V}^1\|_{H^1(\Omega^\epsilon)^N}$ and $\|\vec{V}^2\|_{H^1(\Omega^\epsilon)^N}$ obtained via the cell-function decompositions (\ref{eq: cellV1}) and (\ref{eq: cellV2}), respectively, the inequalities (\ref{eq: L1bound}) and (\ref{eq: L2bound}), and by employing Gronwall's inequality, we obtain
\begin{subequations}
\begin{align}
\|\vec{U}^1\|_{H^1(\Omega^\epsilon)^N}&\leq C(1+(\kappa+\tilde{\kappa})e^{\lambda t})\sqrt{t_le^{lt}+t_l^2e^{2lt}},\label{eq: u1}\\
\|\vec{U}^2\|_{H^1(\Omega^\epsilon)^N}&\leq C(1+(\kappa+\tilde{\kappa})e^{\lambda t})\sqrt{t_le^{lt}+t_l^2e^{2lt}},\label{eq: u2}\\
\|\vec{U}^\epsilon\!-\!\vec{U}^0\|_{L^2(\Omega^\epsilon)^N}&\leq \|\vec{\Psi}^\epsilon\|_{L^2(\Omega^\epsilon)^N}\cr
&\quad+C(\epsilon+\epsilon^2)(1+(\kappa+\tilde{\kappa})e^{\lambda t})\sqrt{t_le^{lt}+t^2_le^{2lt}}.\label{eq: udiff}
\end{align}
\end{subequations}
Thus from identity (\ref{eq: Ksplit}) we obtain
\begin{multline}
    \|\vec{f}^\epsilon\|_{L^2(\Omega^\epsilon)^N}\leq \kappa\|\vec{\Psi}^\epsilon\|_{L^2(\Omega^\epsilon)^N}\\
    +C(\epsilon+\epsilon^2)\kappa(1+(\kappa+\tilde{\kappa})e^{\lambda t})\sqrt{t_le^{lt}+t_l^2e^{2lt}},\label{eq: fbound}
\end{multline}
We now have all the ingredients to estimate $a_\epsilon(\vec{\phi}^\epsilon,\vec{\phi})$. Inserting estimates (\ref{eq: gn}), (\ref{eq: hn}) and (\ref{eq: fbound}) into (\ref{eq: phiweak}), we find
\begin{multline}\label{eq: abound}
    |a_\epsilon(\vec{\phi}^\epsilon,\vec{\phi})|\!\leq\! \left[\kappa\|\vec{\Psi}^\epsilon\|_{L^2(\Omega^\epsilon)^N}\right.\\
    \left.\!C(\epsilon\!+\!\epsilon^2)(1\!+\!(\kappa+\tilde{\kappa})e^{\lambda t})(1+\kappa (1+ t_le^{lt}))\right]\!\|\vec{\phi}\|_{\mathbb{V}_\epsilon^N}.
\end{multline}
Next, we need to estimate the second right-hand term of (\ref{eq: aexpand}),  $a_\epsilon((1-M_\epsilon)(\epsilon \vec{V}^1+\epsilon^2 \vec{V}^2),\vec{\phi})$. Trusting \cite{CioranescuStJeanPaulin1998} (see pages 48 and 49 in the reference) and using the bounds $C(1+(\kappa+\tilde{\kappa})e^{\lambda t})$ for $\|\vec{V}^1\|_{H^1(\Omega^\epsilon)^N}$ and $\|\vec{V}^2\|_{H^1(\Omega^\epsilon)^N}$, we obtain
\begin{multline}
    |a_\epsilon((1-M_\epsilon)(\epsilon \vec{V}^1+\epsilon^2 \vec{V}^2),\vec{\phi})|\\
    \leq \left[C(\epsilon^{\frac{1}{2}}+\epsilon^{\frac{3}{2}})+C(\epsilon+\epsilon^2)\left(1+(\kappa+\tilde{\kappa})e^{\lambda t}\right)\right]\|\vec{\phi}\|_{\mathbb{V}_\epsilon^N}.\label{eq: mebound}
\end{multline}
The combination of (\ref{eq: abound}) and (\ref{eq: mebound}) yields
\begin{multline}
    |a_\epsilon(\vec{\Phi}^\epsilon,\vec{\phi})|\!\leq\! \left[\kappa\|\vec{\Psi}^\epsilon\|_{L^2(\Omega^\epsilon)^N}\!\right.\\
    \left.+C(\epsilon^{\frac{1}{2}}\!+\!\epsilon^{\frac{3}{2}})\left(1+\epsilon^{\frac{1}{2}}(1\!+\!(\kappa+\tilde{\kappa})e^{\lambda t})(1\!+\!\kappa (1+ t_le^{lt}))\right)\right]\!\|\vec{\phi}\|_{\mathbb{V}_\epsilon^N}.\label{eq: bilepsexp}
\end{multline}
Since $\mathcal{L}\vec{\Psi}^\epsilon = \tens{G}\vec{\Phi}^\epsilon$, we obtain an identity similar to (\ref{eq: L1bound}) to which we apply Gronwall's inequality, leading to
\begin{equation}
    \label{eq: Psi-bound}
    \|\vec{\Psi}^\epsilon\|_{L^2(\Omega^\epsilon)^N}^2(t)\leq \int_0^te^{l(t-s)}G_N\|\vec{\Phi}^\epsilon\|_{L^2(\Omega^\epsilon)^N}^2(s)\mathrm{d}s.
\end{equation}
Choosing $\vec{\phi}=\vec{\Phi}^\epsilon$ and with $m$ denoting the coercivity constant $\min\limits_{1\leq i\leq n,1\leq\alpha\leq N}\{\tilde{m}_\alpha,\tilde{e}_i\}$, we obtain
\begin{equation}\label{eq: Phi-prebound}
    m\|\vec{\Phi}^\epsilon\|_{\mathbb{V}_\epsilon^N}^2\!\leq\! \left[\kappa\sqrt{\int_0^te^{l(t-s)}G_N\|\vec{\Phi}^\epsilon\|_{L^2(\Omega^\epsilon)^N}^2(s)\mathrm{d}s}\!+\!\mathcal{B}(\epsilon,t)\right]\!\|\vec{\Phi}^\epsilon\|_{\mathbb{V}_\epsilon^N},
\end{equation}
where
\begin{equation}
    \mathcal{B}(\epsilon,t) = C(\epsilon^{\frac{1}{2}}\!+\!\epsilon^{\frac{3}{2}})\left(1+\epsilon^{\frac{1}{2}}(1\!+\!(\kappa+\tilde{\kappa})e^{\lambda t})(1\!+\!\kappa(1+ t_le^{lt}))\right).\label{eq: betadef}
\end{equation}
Applying Young's inequality twice, once with $\eta>0$ and once with $\eta_1>0$, using the Poincar\'{e} inequality (see Remark \ref{r: poincare}) and Gronwall's inequality to (\ref{eq: Phi-prebound}), we arrive at
\begin{multline}
    \|\vec{\Phi}^\epsilon\|_{\mathbb{V}_\epsilon^N}^2\!\leq\!\frac{\mathcal{B}(\epsilon,t)^2}{\eta_1(2m-\eta_1-\eta)}\\+\int_0^t\frac{\kappa^2 G_N e^{l(t-s)}\mathcal{B}(\epsilon,s)^2}{\eta(2m-\eta_1-\eta)^2\eta_1}\exp\left(\int_s^t\frac{\kappa G_N}{\eta(2m-\eta_1-\eta)}e^{l(t-u)}\mathrm{d}u\right)\mathrm{d}s.\label{eq: betabound}
\end{multline}
Since $0<\mathcal{B}(\epsilon,s)\leq \mathcal{B}(\epsilon,t)$ for $s\leq t$, we can use the Leibniz rule to obtain
\begin{equation}
    \|\vec{\Phi}^\epsilon\|_{\mathbb{V}_\epsilon^N}^2\!\leq\!\frac{\mathcal{B}(\epsilon,t)^2}{\eta_1(2m-\eta_1-\eta)}\exp\left(\frac{\kappa^2 G_N}{\eta(2m-\eta_1-\eta)}t_le^{lt}\right).\label{eq: phiboundbeta}
\end{equation}
Minimizing the two fractions separately leads us to $\eta_1 = m-\frac{\eta}{2}$ and $\eta = m-\frac{\eta_1}{2}$, whence $\eta=\eta_1 = \frac{2}{3}m$. Hence, we obtain
\begin{equation}\label{eq: finalboundPhi}
\begin{array}{rcl}
        \|\vec{\Phi}^\epsilon\|_{\mathbb{V}_\epsilon^N}\!&\leq&\! C(\epsilon^{\frac{1}{2}}\!\!+\!\epsilon^{\frac{3}{2}}\!)\left(1\!+\!\epsilon^{\frac{1}{2}}\!(1\!+\!(\kappa\!+\!\tilde{\kappa})e^{\lambda t})(1\!+\!\kappa (1\!+\! t_le^{lt}))\right)\!\exp\!\left(\mu t_le^{lt}\right)\!,\\
        &=&\mathcal{C}(\epsilon,t)
        \end{array}
\end{equation}
and from (\ref{eq: Psi-bound}), we arrive at
\begin{equation}
        \|\vec{\Psi}^\epsilon\|_{H^1(\Omega^\epsilon)^N}\leq\!\mathcal{C}(\epsilon,t)\sqrt{t_le^{lt}}\label{eq: finalboundPsi}
\end{equation}
with
\begin{equation}\label{eq: mudef1}
    \mu = \frac{9\kappa^2G_N}{8m^2}.
\end{equation}
This completes the proof of Theorem \ref{t: corrector}.
\qed
\section{Upscaling and convergence speeds for a concrete corrosion model}\label{s: application}
In \cite{Vromans2018PAC} a concrete corrosion model has been derived from first principles. This model combines mixture theory with balance laws, while incorporating chemical reaction effects, mechanical deformations, incompressible flow, diffusion, and moving boundary effects. The model represents the onset of concrete corrosion by representing the corroded part as a layer of cement (the mixture) on top of a concrete bed and below an acidic fluid. The mixture contains three components $\phi = (\phi_1,\phi_2,\phi_3)$, which react chemically via $3+2\rightarrow 1$. For simplification, we work in volume fractions. Hence, the identity $\phi_1+\phi_2+\phi_3 = 1$ holds. The model equations on a domain $\Omega$ become for $\alpha\in\{1,2,3\}$
\begin{subequations}
\begin{align}
\frac{\partial\phi_\alpha}{\partial t}+\epsilon\nabla\cdot(\phi_\alpha\mathbf{v}_\alpha)-\epsilon\delta_\alpha\Delta \phi_\alpha&=\quad\epsilon\kappa_\alpha \mathcal{F}(\phi_1,\phi_3),\label{eq: cor1}\\
\nabla\cdot\left(\sum_{\alpha=1}^3\phi_\alpha\mathbf{v}_\alpha\right)-\sum_{\alpha=1}^3\delta_\alpha\Delta \phi_\alpha&=\quad\sum_{\alpha=1}^3\kappa_\alpha \mathcal{F}(\phi_1,\phi_3),\label{eq: cor2}\\
\nabla(-\phi_\alpha p+\![\lambda_\alpha\!+\!\mu_\alpha]\nabla\!\!\cdot\!\mathbf{u}_\alpha)\!+\!\mu_\alpha\Delta \mathbf{u}_\alpha&=\chi_\alpha(\mathbf{v}_\alpha\!-\!\mathbf{v}_3)\!-\!\!\sum_{\beta=1}^3\!\gamma_{\alpha\beta}\Delta \mathbf{v}_\beta,\label{eq: cor3}\\
\nabla\left(-p+\sum_{\alpha=1}^2(\lambda_\alpha+\mu_\alpha)\nabla\cdot\mathbf{u}_\alpha\right)+\sum_{\alpha=1}^2\mu_\alpha\Delta \mathbf{u}_\alpha&+\sum_{\alpha=1}^3\sum_{\beta=1}^3\gamma_{\alpha\beta}\Delta \mathbf{v}_\beta = 0,\label{eq: cor4}
\end{align} \end{subequations}
where $U_\alpha$ and $\vec{v}_\alpha=\partial U_\alpha/\partial t$ are the displacement and velocity of component $\alpha$, respectively, and $\epsilon$ is a small positive number independent of any spatial scale. Equation (\ref{eq: cor1}) denotes a mass balance law, (\ref{eq: cor2}) denotes the incompressibility condition, (\ref{eq: cor3}) the partial (for component $\alpha$) momentum balance law, and (\ref{eq: cor4}) the total momentum balance.\\
For $t=\mathcal{O}(\epsilon^0)$, we can treat $\phi$ as constant, which removes some nonlinearities from the model. Moreover, with equation (\ref{eq: cor2}) we can eliminate $\vec{v}_3$ in favor of $\vec{v}_1$ and $\vec{v}_2$, while with equation (\ref{eq: cor4}) we can eliminate $p$. This leads to a final expression for $\vec{u} = (U_1,U_2)$:
\begin{equation}
      \tilde{\tens{M}}\partial_t\vec{u} -\tilde{\tens{A}}\vec{u}-\mathrm{div}\left(\tilde{\tens{B}}\vec{u}+\tilde{\tens{D}}\partial_t\vec{u}+
      \tens{E}\cdot\nabla\left(\tens{F}\vec{u}+\tilde{\tens{G}}\partial_t\vec{u}\right)\right)= \vec{H},\label{eq: pseudopara}
\end{equation}
with
\begin{subequations}
\begin{align}
\tilde{\tens{M}} &= \left(\begin{array}{cc}
     \chi_1\frac{\phi_1+\phi_3}{\phi_3}&\chi_1\frac{\phi_2}{\phi_3}\\
     \chi_2\frac{\phi_1}{\phi_3}&\chi_2\frac{\phi_2+\phi_3}{\phi_3}
\end{array}\right),&\quad\tilde{\tens{A}}&=\tilde{\tens{B}} = \tilde{\tens{D}} = \tens{0},\label{eq: constants1}\\
\tens{F} &= \left(\begin{array}{cc}
     \mu_1(\phi_2+\phi_3)&-\mu_2\phi_1\\
     -\mu_1\phi_2&\mu_2(\phi_1+\phi_3)
\end{array}\right),&\quad\tens{E} &= \mathbf{I},\label{eq: constants2}\\
\tilde{G}_{\alpha\beta} &= -\gamma_{\alpha\beta}+\phi_\alpha\sum_{\lambda=1}^3\gamma_{\lambda\beta},&\quad H_\alpha &= \frac{\chi_\alpha}{\phi_3}\mathcal{F}(\phi_1,\phi_3)\sum_{\lambda = 1}^3\kappa_\lambda.\label{eq: constants3}
\end{align}
\end{subequations}
According to \cite{Vromans2018PAC}, there are several options for $\gamma_{\alpha\beta}$, but all these options lead to non-invertible $\mathcal{G}$. Suppose we take $\gamma_{11} = \gamma_{22}=\gamma_1<0$ and $\gamma_{12} = \gamma_{21}=\gamma_2<0$ with $\gamma_1>\gamma_2$. Then $\mathcal{G}$ is invertible and positive definite for $\phi_3>0$, since the determinant of $\mathcal{G}$ equals $(\gamma_1^2-\gamma_2^2)\phi_3$.\\
According to Section 4.3 of \cite{VromansLIC}, we obtain the Neumann problem (\ref{eq: Aeps-sys}), (\ref{eq: L-sys}) with
\begin{equation}
    \tens{M} = \tilde{\tens{M}}\tilde{\tens{G}}^{-1},\; \tens{D} = \tens{0},\;\tens{L} = \tilde{\tens{G}}^{-1}\tens{F},\; \tens{G} = \tilde{\tens{G}}^{-1},\; \tens{K} = -\tilde{\tens{M}}\tilde{\tens{G}}^{-1}\tens{F},\; \tens{J} = \tens{0}. \label{eq: appconst}
\end{equation}
Note, that both $\tens{E}$ and $\vec{H}$ do not change in this transformation. Moreover, $\tens{M}$ is positive definite, since both $\tilde{\tens{M}}$ and $\tilde{\tens{G}}$ are positive definite.\\
$\;$\\
Suppose the cement mixture has a periodic microstructure, satisfying assumption (A4), inherited from the concrete microstructure if corroded. Assume the constants $\chi_\alpha$, $\mu_\alpha$, $\kappa_\alpha$, and $\gamma_{\alpha\beta}$ are actually functions of both the macroscopic scale $\vec{x}$ and the microscopic scale $\vec{y}$, such that Assumptions (A1)-(A3) are satisfied. Note that (A3) is trivially satisfied.\\
$\;$\\
From the main results we see that a macroscale limit $(\vec{U}^0,\vec{V}^0)$ of this microscale corrosion problem exists, which satisfies system (\textbf{P}$^0_w$), and that the convergence speed is given by Theorem \ref{t: corrector} with constants $l$, $\kappa$, $\lambda$ and $\mu$ given by Appendix \ref{app: constants}.
\appendix
\section{Determining $\kappa$, $\tilde{\kappa}$ and exponents $l$, $\lambda$ and $\mu$.}\label{app: constants}
In Theorem \ref{t: corrector}, the three constants $l$, $\lambda$ and $\mu$ are introduced as exponents indicating the exponential growth in time of the corrector bounds. Moreover, there was also a constant $\kappa$ that indicated whether additional exponential growth occurs or not. For brevity it was not stated how these constants depend on the given matrices and tensors. Here we will give an exact determination procedure of these constants.\\
The constant $\kappa$ denotes the maximal operator norm of the tensor $\tens{K}$.
\begin{equation}
\kappa=\sup\limits_{1\leq\alpha,\beta\leq N}\|K_{\alpha\beta}\|_{L^\infty(\mathbf{R}_+;W^{1,\infty}(\Omega;C^1_\#(Y^*)))}.\label{eq: app kappabound}
\end{equation}
The constants $l$, $\lambda$, $\tilde{\kappa}$ and $\mu$ were obtained via Young's inequality, which make them a coupled system via several additional positive constants: $\eta$, $\eta_1$, $\eta_2$, $\eta_3$. The obtained expressions are
\begin{subequations}
\begin{align}
    l&= \max\{0,L_N\},\label{eq: app lbound}\\
    \lambda &= \frac{1}{2}\max\left\{0,L_N+\max\left\{L_G+G_M\max_{1\leq\alpha\leq N}\tilde{K}_\alpha,G_M\max_{1\leq\alpha\leq N}\max_{1\leq i\leq d}\tilde{J}_{i\alpha}\right\}\right\},\label{eq: app lambdabound}\\
    \mu&= \frac{9\kappa^2}{8m^2}G_N,\label{eq: app mubound}\\
    \tilde{\kappa} &= \max_{1\leq\alpha\leq N,1\leq i\leq d}\{\tilde{K}_\alpha,\tilde{J}_{i\alpha}\}\label{eq: app tilkapbound}
\end{align}
\end{subequations}
with the values
\begin{subequations}
\begin{align}
    L_N&= 2\mathcal{L}_{\min}+\eta G_{\max}+\eta_1dN\mathcal{L}_G,\label{eq: app LNbound}\\
    L_G&= 2\mathcal{L}_{\min}+\frac{dN}{\eta_1}\mathcal{L}_G+\eta_2G_{\max}+\eta_3dNG_G,\label{eq: app LGbound}\\
    G_N&=\frac{1}{\eta}G_{\max}+\frac{dN}{\eta_3}G_G,\label{eq: app GNbound}\\
    G_G&=\frac{1}{\eta_2}G_{\max} ,\label{eq: app GGbound}\\
    G_M&= \max_{1\leq\alpha\leq N}\max_{1\leq i\leq d}\left\{\frac{G_N+G_G}{\tilde{m}_\alpha},\frac{G_N}{\tilde{e}_i}\right\},\label{eq: app GMbound}\\
    m&= \min_{1\leq\alpha\leq N}\min_{1\leq i\leq d}\{\tilde{m}_\alpha,\tilde{e}_i\},\label{eq: app mbound}
\end{align}
\end{subequations}
where we have the positive values
\begin{subequations}
\begin{align}
    \tilde{m}_\alpha&=m_\alpha-\sum_{i=1}^d\sum_{\beta=1}^N\frac{\|D_{i\beta\alpha}\|_{L^\infty(\mathbf{R}_+\times\Omega;C_\#(Y^*))}}{2\eta_{i\beta\alpha}}-\eta_\alpha-\sum_{\beta=1}^N\eta_{\alpha\beta}-\sum_{i=1}^d\sum_{\beta=1}^N\tilde{\eta}_{i\alpha\beta},\label{eq: app tildembound}\\
    \tilde{e}_i&=e_i-\sum_{\alpha,\beta=1}^N\frac{\eta_{i\beta\alpha}}{2}\|D_{i\beta\alpha}\|_{L^\infty(\mathbf{R}_+\times\Omega;C_\#(Y^*))},\label{eq: app tildeebound}\\
    \tilde{H}&= \sum_{\alpha = 1}^N\frac{1}{4\eta_{\alpha}}\|\tens{H}_{\alpha}\|^2_{L^\infty(\mathbf{R}_+\times\Omega;C_\#(Y^*))},\label{eq: app tildehhbound}\\
    \tilde{K}_\alpha&= \sum_{\beta=1}^N\frac{1}{4\eta_{\beta\alpha}}\|K_{\beta\alpha}\|^2_{L^\infty(\mathbf{R}_+\times\Omega;C_\#(Y^*))},\label{eq: app tildehbound}\\
    \tilde{J}_{i\alpha}&= \sum_{\beta=1}^N\frac{\epsilon_{0}^2}{4\tilde{\eta}_{i\beta\alpha}}\|J_{i\beta\alpha}\|^2_{L^\infty(\mathbf{R}_+\times\Omega;C_\#(Y^*))}\label{eq: app tildehibound}
\end{align}
\end{subequations}
for $\eta_{i\beta\alpha}>0$, $\eta_{\beta}>0$, $\eta_{\alpha\beta}>0$, $\tilde{\eta}_{i\alpha\beta}>0$ and $\epsilon_{0}$ the supremum of allowed $\epsilon$ values (which is 1 for Theorem \ref{t: timeinterval}).
Moreover, we have\begin{itemize}
    \item $\mathcal{L}_{\min}$ as the $L^\infty(\mathbf{R}_+\times\Omega)$-norm of the absolute value of the largest negative eigenvalue or it is -1 times the smallest positive eigenvalue of $\tens{L}$ if no negative or 0 eigenvalues exist,
    \item $\mathcal{L}_G$ as the $L^\infty(\mathbf{R}_+\times\Omega)$-norm of the largest absolute value of the $\nabla\tens{L}$ components,
    \item $G_{\max}$ as the $L^\infty(\mathbf{R}_+\times\Omega)$-norm of the largest eigenvalue of $\tens{G}$,
    \item $G_G$ as the $L^\infty(\mathbf{R}_+\times\Omega)$-norm of the largest absolute value of the $\nabla\tens{G}$ components.
\end{itemize}
\begin{remarkArt}\label{r: ratecoupling}
Remark that smaller $l$ and $\mu$ yield longer times $\tau$ in Theorem \ref{t: timeinterval} and faster convergence rates in $\epsilon$. However, $l$ and $\mu$ are only coupled via $\lambda$. Hence, $l$ and $\mu$ can be made as small as needed as long as $\lambda$ remains finite and independent of $\epsilon$.
\end{remarkArt}
\begin{remarkArt}\label{r: minTree}
Note that $\mathcal{L}_{\min}<0$ allows for a hyperplane of positive values of $\eta$ and $\eta_1$ in $(\eta,\eta_1,\eta_2,\eta_3)$-space such that $l=L_N=0$. In this case not $\lambda$ or $\mu$ should be minimized. Instead $\tau_{end}$ should be maximized, the time $\tau$ for which the bounds of Theorem \ref{t: timeinterval} equal $\mathcal{O}(1)$ for $p=q=0$. For $\mu\geq\lambda$ this yields a minimization of $\mu$, while for $\mu<\lambda$ a minimization of $\mu+\lambda$. Due to the use of maximums in the definition of $\lambda$ and $\tau_{end}$, we refrain from maximizing $\tau_{end}$ as any attempt leads to a large tree of cases for which an optimization problem has to be solved.
\end{remarkArt}
\section{Two-scale convergence}\label{a: 2-scale}
Two-scale convergence is a method invented in 1989 by Nguetseng, see \cite{Nguetseng1989}. This method removes many technicalities by basing the convergence itself on functional analytic grounds as a property of functions in certain spaces. In some sense the function spaces natural to periodic boundary conditions have nice convergence properties of their oscillating continuous functions. This is made precise in the First Oscillation Lemma:
\begin{lemma}[`First Oscillation Lemma']\label{H: t: oscillation}
Let $B_p(\Omega,Y)$, $1\leq p<\infty$, denote any of the spaces $L^p(\Omega;C_\#(Y))$, $L^p_\#(\Omega;C(\overline{Y}))$, $C(\overline{\Omega};C_\#(Y))$. Then $B_p(\Omega,Y)$ has the following properties:
\begin{enumerate}
    \item $B_p(\Omega,Y)$ is a separable Banach space.
    \item $B_p(\Omega,Y)$ is dense in $L^p(\Omega\times Y)$.
    \item If $f(\vec{x},\vec{y})\in B_p(\Omega,Y)$. Then $f(\vec{x},\vec{x}/\epsilon)$ is a measurable function on $\Omega$ such that
    \begin{equation}
        \left\|f\left(\vec{x},\frac{\vec{x}}{\epsilon}\right)\right\|_{L^p(\Omega)}\leq  \left\|f\left(\vec{x},\vec{y}\right)\right\|_{B_p(\Omega,Y)}.\label{eq: appFOL1}
    \end{equation}
    \item For every $f(\vec{x},\vec{y})\in B_p(\Omega,Y)$, one has
    \begin{equation}
        \lim_{\epsilon\rightarrow0}\int_\Omega f\left(\vec{x},\frac{\vec{x}}{\epsilon}\right)\mathrm{d}\vec{x} = \frac{1}{|Y|}\int_\Omega\int_Yf(\vec{x},\vec{y})\mathrm{d}\vec{y}\mathrm{d}\vec{x}.\label{eq: appFOL2}
    \end{equation}
    \item For every $f(\vec{x},\vec{y})\in B_p(\Omega,Y)$, one has
    \begin{equation}
        \lim_{\epsilon\rightarrow0}\int_\Omega \left|f\left(\vec{x},\frac{\vec{x}}{\epsilon}\right)\right|^p\mathrm{d}\vec{x} = \frac{1}{|Y|}\int_\Omega\int_Y |f(\vec{x},\vec{y})|^p\mathrm{d}\vec{y}\mathrm{d}\vec{x}.\label{eq: appFOL3}
    \end{equation}
\end{enumerate}
See Theorems 2 and 4 in \cite{LukkassenNguetsengWall2002}.
\end{lemma}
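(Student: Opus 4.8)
The plan is to establish the five assertions in the order listed, since each rests on the previous ones. For \textbf{(1)}, I would observe that $C_\#(Y)$, equipped with the sup-norm, is a separable Banach space (after the change of variables induced by $\gamma$ it consists of continuous periodic functions, on which trigonometric polynomials with rational coefficients form a countable dense set), and that $C(\overline{Y})$, $L^p(\Omega)$, $L^p_\#(\Omega)$ are separable Banach spaces as well; completeness and separability of $L^p(\Omega;C_\#(Y))$, $L^p_\#(\Omega;C(\overline{Y}))$ and $C(\overline{\Omega};C_\#(Y))$ then follow from the standard theory of Bochner and vector-valued continuous-function spaces over a $\sigma$-finite measure space, respectively a compact metric space. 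For \textbf{(2)}, I would use Fubini to write $L^p(\Omega\times Y)=L^p(\Omega;L^p(Y))$ and note that finite sums $\sum_i a_i(\vec{x})b_i(\vec{y})$ with $a_i\in C(\overline{\Omega})$ and $b_i\in C_\#(Y)$ are dense both in each space $B_p(\Omega,Y)$ (Stone--Weierstrass in the slow or the fast variable, combined with density of continuous/simple functions in the $L^p$-variable) and in $L^p(\Omega\times Y)$; since $B_p(\Omega,Y)$ is squeezed between this common dense set and $L^p(\Omega\times Y)$, assertion (2) follows.

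For \textbf{(3)} the crucial pointwise estimate is
\begin{equation*}
\left|f\!\left(\vec{x},\tfrac{\vec{x}}{\epsilon}\right)\right|\le\sup_{\vec{y}\in Y}\left|f(\vec{x},\vec{y})\right|=\|f(\vec{x},\cdot)\|_{C_\#(Y)},
\end{equation*}
legitimate precisely because $f(\vec{x},\cdot)$ is continuous and periodic. For $f$ a finite sum of products $a(\vec{x})b(\vec{y})$ with $a$ simple and $b\in C_\#(Y)$ the trace $\vec{x}\mapsto f(\vec{x},\vec{x}/\epsilon)$ is manifestly measurable; for general $f\in B_p(\Omega,Y)$ I would take such $f_n\to f$ in $B_p(\Omega,Y)$, apply the pointwise estimate to $f_n-f_m$ to see that $(f_n(\cdot,\cdot/\epsilon))_n$ is Cauchy in $L^p(\Omega)$, define $f(\cdot,\cdot/\epsilon)$ as its limit (its independence of the approximating sequence is checked again via the estimate), and pass to the limit in $\int_\Omega|f_n(\vec{x},\vec{x}/\epsilon)|^p\,\mathrm{d}\vec{x}\le\|f_n\|_{B_p(\Omega,Y)}^p$ to obtain \eqref{eq: appFOL1}. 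For \textbf{(4)}, the uniform-in-$\epsilon$ bound from (3) and the density from (2) reduce the claim, by a three-$\epsilon$ argument, to $f(\vec{x},\vec{y})=a(\vec{x})b(\vec{y})$ with $a\in C(\overline{\Omega})$, $b\in C_\#(Y)$; there I would tile $\mathbf{R}^d$ by the $\epsilon$-scaled period cells $\epsilon g([0,1]^d)$, $g\in G_\gamma$, replace $a$ on each cell meeting $\Omega$ by its value at a fixed point of that cell (the error tends to $0$ by uniform continuity of $a$), use the exact identity $\int_{\epsilon g([0,1]^d)}b(\vec{x}/\epsilon)\,\mathrm{d}\vec{x}=\epsilon^d\int_Y b\,\mathrm{d}\vec{y}$, and bound the $O(\epsilon)$ contribution of the cells straddling $\partial\Omega$ using that the $\epsilon$-collar of a $C^2$-boundary has measure $O(\epsilon)$; letting $\epsilon\downarrow0$ yields \eqref{eq: appFOL2}. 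Finally \textbf{(5)} follows by applying (4) with $p=1$ to $(\vec{x},\vec{y})\mapsto|f(\vec{x},\vec{y})|^p$, which lies in $B_1(\Omega,Y)$ whenever $f\in B_p(\Omega,Y)$ (the map $t\mapsto|t|^p$ preserves continuity in $\vec{y}$ and $\int_\Omega\||f(\vec{x},\cdot)|^p\|_{C_\#(Y)}\,\mathrm{d}\vec{x}=\|f\|_{B_p(\Omega,Y)}^p<\infty$), together with the identity $|f(\vec{x},\vec{x}/\epsilon)|^p=(|f|^p)(\vec{x},\vec{x}/\epsilon)$, which gives \eqref{eq: appFOL3}.

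The step I expect to be the main obstacle is \textbf{(3)}: giving a clean meaning to the ``diagonal trace'' $\vec{x}\mapsto f(\vec{x},\vec{x}/\epsilon)$ for an $f\in B_p(\Omega,Y)$ that is a priori defined only up to null sets in the slow variable, and proving it is measurable and representative-independent. Continuity of $f$ in the fast variable is exactly what rescues this, via the sup-bound above and the approximation by tensor functions; for the variant space $L^p_\#(\Omega;C(\overline{Y}))$ the roles of $\vec{x}$ and $\vec{y}$ in the approximation are interchanged, but the argument is otherwise identical. Once (3) is in place, the remaining assertions are either soft functional analysis (items (1)--(2)) or reduce to the classical Riemann-sum computation (items (4)--(5)).
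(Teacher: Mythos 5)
The paper does not prove the First Oscillation Lemma at all; it simply points to Theorems 2 and 4 of Lukkassen, Nguetseng and Wall, so there is no in-house argument to compare against. Your reconstruction is correct and is essentially the standard proof underlying that reference: separability and density via Stone--Weierstrass and tensor products; the diagonal-trace estimate and its measurability via the pointwise sup-in-$\vec{y}$ bound combined with approximation by (simple in $\vec{x}$)$\times$(continuous $Y$-periodic in $\vec{y}$) functions; the Riemann-sum tiling argument for the mean-value limit in item (4); and the reduction of item (5) to item (4) applied to $|f|^p$. Two cosmetic remarks. First, since $Y=\mathrm{Int}(\gamma([0,1]^d))$ and $G_\gamma=\gamma\circ G_0\circ\gamma^{-1}$, the cells used in the tiling step should be $\epsilon\,\gamma\bigl(g_0([0,1]^d)\bigr)$ with $g_0\in G_0$, not $\epsilon\,g([0,1]^d)$ with $g\in G_\gamma$; with that adjustment the per-cell identity
\begin{equation*}
\int_{\epsilon\,\gamma(g_0([0,1]^d))}b\!\left(\frac{\vec{x}}{\epsilon}\right)\mathrm{d}\vec{x}=\epsilon^d\int_Y b(\vec{y})\,\mathrm{d}\vec{y}
\end{equation*}
follows from the change of variables $\vec{y}=\vec{x}/\epsilon$ and $G_\gamma$-periodicity of $b$. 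Second, as you already observe, in the mixed space $L^p_\#(\cdot\,;C(\cdot))$ the roles of the slow and fast variables are swapped in the approximation, which costs nothing. Neither remark affects the validity of your argument.
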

However, application of the First Oscillation Lemma is not sufficient as it cannot be applied to weak solutions nor to gradients. Essentially two-scale convergence overcomes these problems by extending the First Oscillation Lemma in a weak sense.
\subsection{Two-scale convergence: definition and results}\label{s: two-scale}
For each function $c(t,\vec{x},\vec{y})$ on $(0,T)\times\Omega\times Y$, we introduce a corresponding sequence of functions $c^\epsilon(t,\vec{x})$ on $(0,T)\times\Omega$ by
\begin{equation}
c^\epsilon(t,\vec{x}) = c\left(t,\vec{x},\frac{\vec{x}}{\epsilon}\right)    \label{H: eq:  correspondence}
\end{equation}
for all $\epsilon\in(0,\epsilon_0)$, although two-scale convergence is valid for more general bounded sequences of functions $c^\epsilon(t,\vec{x})$.\\
Introduce the notation $\nabla_{\vec{y}}$ for the gradient in the $\vec{y}$-variable. Moreover, we introduce the notations $\rightarrow$, $\rightharpoonup$, and $\overset{2}{\longrightarrow}$ to point out strong convergence, weak convergence, and two-scale convergence, respectively.\\
\\
The two-scale convergence was first introduced in \cite{Nguetseng1989} and popularized with the seminal paper \cite{Allaire1992}, in which the term two-scale convergence was actually coined. For our explanation we use both the seminal paper \cite{Allaire1992} as the modern exposition of two-scale convergence in \cite{LukkassenNguetsengWall2002}. From now on, $p$ and $q$ are real numbers such that $1<p<\infty$ and $1/p+1/q=1$.
\begin{definition}\label{d: two-scale} Let $(\epsilon_h)_h$ be a fixed sequence of positive real numbers\footnote{when it is clear from the context we will omit the subscript $h$} converging to 0. A sequence $(u_\epsilon)$ of functions in $L^p(\Omega)$ is said to two-scale converge to a limit $u_0\in L^p(\Omega\times Y)$ if
\begin{equation}
    \int_\Omega u_\epsilon(\vec{x})\phi\left(\vec{x},\frac{\vec{x}}{\epsilon}\right)\mathrm{d}\vec{x}\rightarrow\frac{1}{|Y|}\int_\Omega\int_Yu_0(\vec{x},\vec{y})\phi(\vec{x},\vec{y})\mathrm{d}\vec{y}\mathrm{d}\vec{x},\label{H: eq:  two-scale}
\end{equation}
for every $\phi\in L^q(\Omega;C_\#(Y))$.\\
See Definition 6 on page 41 of \cite{LukkassenNguetsengWall2002}.
\end{definition}
We now list several important results concerning the two-scale convergence.
\begin{proposition}\label{H: p: grad-extra}
Let $(u_\epsilon)$ be a bounded sequence in $W^{1,p}(\Omega)$
for $1<p\leq\infty$ such that
 \begin{equation}
     u_\epsilon\rightharpoonup u_0\quad{in}\quad W^{1,p}(\Omega).\label{eq: apptwoscale1}
 \end{equation}
 Then $u_\epsilon\overset{2}{\longrightarrow}u_0$ and there exist a subsequence $\epsilon'$ and a $u_1\in L^p(\Omega;W^{1,p}_\#(Y)/\mathbf{R})$ such that
 \begin{equation}
     \nabla u_{\epsilon'}\overset{2}{\longrightarrow}\nabla u_0+\nabla_{\vec{y}}u_1.\label{eq: apptwoscale2}
 \end{equation}
 \end{proposition}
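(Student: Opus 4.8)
The plan is to establish the two-scale convergence $u_\epsilon\overset{2}{\longrightarrow}u_0$ first, and then separately extract the corrector $u_1$ capturing the microscopic oscillations of the gradient. For the first assertion, since $(u_\epsilon)$ is bounded in $W^{1,p}(\Omega)$ and $1<p\le\infty$, it is in particular bounded in $L^p(\Omega)$, so by the standard two-scale compactness theorem (see Theorem 7 in \cite{LukkassenNguetsengWall2002}) a subsequence two-scale converges to some limit in $L^p(\Omega\times Y)$. The point is to identify that limit with $u_0$: testing the definition (\ref{H: eq:  two-scale}) against $\phi(\vec{x},\vec{y})=\psi(\vec{x})$ independent of $\vec{y}$ reduces to ordinary weak convergence $u_\epsilon\rightharpoonup u_0$ in $L^p(\Omega)$, which holds by (\ref{eq: apptwoscale1}) and the compact embedding $W^{1,p}(\Omega)\hookrightarrow L^p(\Omega)$. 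A density argument (functions of the form $\sum_k\psi_k(\vec{x})\chi_k(\vec{y})$ are dense in $L^q(\Omega;C_\#(Y))$) then forces the full two-scale limit to coincide with $u_0$, and since every subsequence has a further subsequence with this same limit, the whole sequence two-scale converges.

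Next I would handle the gradient. The sequence $(\nabla u_\epsilon)$ is bounded in $L^p(\Omega)^d$, so again along a subsequence $\epsilon'$ it two-scale converges to some $\xi_0\in L^p(\Omega\times Y)^d$. The key structural fact is that $\xi_0$ has the form $\nabla u_0+\nabla_{\vec{y}}u_1$ for some $u_1\in L^p(\Omega;W^{1,p}_\#(Y)/\mathbf{R})$. The standard way to see this: pick a test vector field $\vec{\Psi}\in\mathcal{D}(\Omega;C^\infty_\#(Y)^d)$ that is divergence-free in $\vec{y}$, i.e. $\nabla_{\vec{y}}\cdot\vec{\Psi}=0$, and integrate by parts in the identity
\begin{equation}
\int_\Omega\nabla u_\epsilon(\vec{x})\cdot\vec{\Psi}\left(\vec{x},\frac{\vec{x}}{\epsilon}\right)\mathrm{d}\vec{x} = -\int_\Omega u_\epsilon(\vec{x})\left[(\nabla_{\vec{x}}\cdot\vec{\Psi})\left(\vec{x},\frac{\vec{x}}{\epsilon}\right)+\frac{1}{\epsilon}(\nabla_{\vec{y}}\cdot\vec{\Psi})\left(\vec{x},\frac{\vec{x}}{\epsilon}\right)\right]\mathrm{d}\vec{x}.
\end{equation}
The singular $1/\epsilon$ term vanishes by the divergence-free choice of $\vec{\Psi}$; passing to the two-scale limit on both sides using the already-established convergences yields
\begin{equation}
\frac{1}{|Y|}\int_\Omega\int_Y\xi_0(\vec{x},\vec{y})\cdot\vec{\Psi}(\vec{x},\vec{y})\,\mathrm{d}\vec{y}\,\mathrm{d}\vec{x} = -\frac{1}{|Y|}\int_\Omega\int_Y u_0(\vec{x})(\nabla_{\vec{x}}\cdot\vec{\Psi})(\vec{x},\vec{y})\,\mathrm{d}\vec{y}\,\mathrm{d}\vec{x} = \frac{1}{|Y|}\int_\Omega\int_Y\nabla u_0(\vec{x})\cdot\vec{\Psi}(\vec{x},\vec{y})\,\mathrm{d}\vec{y}\,\mathrm{d}\vec{x},
\end{equation}
where the last step uses that $u_0=u_0(\vec{x})$ is $\vec{y}$-independent together with $\int_Y\nabla_{\vec{y}}\cdot(\cdot) = 0$ by periodicity. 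Hence $\xi_0-\nabla u_0$ is $L^2$-orthogonal (more precisely, annihilated in the duality pairing) to all $\vec{y}$-divergence-free periodic test fields, so by the classical characterization of such orthogonal complements (de Rham / Helmholtz-type argument on the torus) it is a $\vec{y}$-gradient: $\xi_0-\nabla u_0=\nabla_{\vec{y}}u_1$ with $u_1\in L^p(\Omega;W^{1,p}_\#(Y)/\mathbf{R})$.

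The main obstacle is the last step — the orthogonality characterization that forces $\xi_0-\nabla u_0$ to be a periodic $\vec{y}$-gradient. For $p=2$ this is a direct Hilbert-space orthogonal decomposition on $L^2_\#(Y)^d$ into gradients and divergence-free fields; for general $1<p<\infty$ (and especially the borderline $p=\infty$ allowed here, which one typically reaches by using that $L^\infty\subset L^p_{\mathrm{loc}}$ and the bound is uniform) one needs the $L^p$ version of the de Rham lemma on the torus, together with care that the null-mean normalization $W^{1,p}_\#(Y)/\mathbf{R}$ is the right quotient to get uniqueness of $u_1$. I would cite this from \cite{Allaire1992} or \cite{LukkassenNguetsengWall2002} rather than reprove it. A secondary technical point is the measurability and integrability of $u_1$ in the $\vec{x}$-variable, which follows from a measurable-selection / closed-range argument once the pointwise-in-$\vec{x}$ decomposition is in hand; this too is standard and I would invoke it by reference.
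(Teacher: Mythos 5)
The paper does not prove this proposition at all: it simply cites the literature, noting that the case $1<p<\infty$ is Theorem~20 in \cite{LukkassenNguetsengWall2002}, $p=2$ is Proposition~1.14(i) in \cite{Allaire1992}, and $p=\infty$ is mentioned on p.~1492 of \cite{Allaire1992}, and then declares that only $p=2$ is used. Your proposal instead reconstructs the standard textbook proof underlying those references, which is a legitimate and genuinely more self-contained route. The gradient part is essentially right: test $\nabla u_{\epsilon'}$ against $\vec{y}$-divergence-free periodic fields, integrate by parts so the singular $\epsilon^{-1}\nabla_{\vec{y}}\cdot\vec{\Psi}$ term drops out, pass to the two-scale limit, integrate by parts once more in $\vec{x}$, and invoke a de Rham/Helmholtz decomposition on the torus to extract $u_1$. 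You correctly flag the two technical points that should be cited rather than reproved (the $L^p$ de Rham lemma and measurable selection of $u_1$ in $\vec{x}$).

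However, the first half of your argument has a real gap. Knowing that $u_\epsilon\rightharpoonup u_0$ in $L^p(\Omega)$ and that a subsequence two-scale converges to some $\tilde u_0\in L^p(\Omega\times Y)$ tells you, upon testing with $\vec{y}$-independent $\psi(\vec{x})$, only that $\tfrac{1}{|Y|}\int_Y\tilde u_0\,\mathrm{d}\vec{y}=u_0$; it does \emph{not} force $\tilde u_0=u_0$. The density argument you invoke cannot close this, because for a test function $\psi(\vec{x})\chi(\vec{y})$ the quantity $\lim_\epsilon\int u_\epsilon(\vec{x})\psi(\vec{x})\chi(\vec{x}/\epsilon)\,\mathrm{d}\vec{x}$ is not determined by weak convergence of $u_\epsilon$ alone (take $u_\epsilon=\chi(\vec{x}/\epsilon)$ as a counterexample). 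What you need, and what the Rellich--Kondrachov embedding you mention actually gives, is \emph{strong} convergence $u_\epsilon\to u_0$ in $L^p(\Omega)$; then $\int(u_\epsilon-u_0)\phi(\vec{x},\vec{x}/\epsilon)\,\mathrm{d}\vec{x}\to 0$ by H\"older together with the uniform $L^q$ bound on $\phi(\vec{x},\vec{x}/\epsilon)$ from the First Oscillation Lemma, and $\int u_0\phi(\vec{x},\vec{x}/\epsilon)\,\mathrm{d}\vec{x}$ converges by the Oscillation Lemma itself, giving $u_\epsilon\overset{2}{\longrightarrow}u_0$ directly. Alternatively, note $\epsilon\nabla u_\epsilon\to 0$ strongly in $L^p$ and apply Proposition~\ref{H: p: grad} to conclude $\nabla_{\vec{y}}\tilde u_0=0$. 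Either fix is short, but as written the step is not justified.
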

Proposition \ref{H: p: grad-extra} for $1<p<\infty$ is Theorem 20 in \cite{LukkassenNguetsengWall2002}, while for $p=2$ it is identity (i) in Proposition 1.14 in \cite{Allaire1992}. On page 1492 of \cite{Allaire1992} it is mentioned that the $p=\infty$ case holds as well. The case of interest for us here is $p=2$.

\begin{proposition}\label{H: p: grad}
Let $(u_\epsilon)$ and $(\epsilon\nabla u_\epsilon)$ be two bounded sequence in $L^2(\Omega)$. Then there exists a function $u_0(\vec{x},\vec{y})$ in $L^2(\Omega;H^1_\#(Y))$ such that, up to a subsequence, $u_\epsilon\overset{2}{\longrightarrow} u_0(\vec{x},\vec{y})$ and $\epsilon\nabla u_\epsilon\overset{2}{\longrightarrow} \nabla_{\vec{y}}u_0(\vec{x},\vec{y})$. See identity (ii) in Proposition 1.14 in \cite{Allaire1992}.
\end{proposition}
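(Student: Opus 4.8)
The plan is to proceed in three steps: extract two-scale limits by compactness, pass to the limit in an $\epsilon$-level integration-by-parts identity to identify the second limit as a $\vec{y}$-gradient, and finally record the periodicity and regularity of the limit. Throughout I would use only the definition of two-scale convergence and the standard two-scale compactness theorem (Allaire, Theorem~1.2 in \cite{Allaire1992}).

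First, since $(u_\epsilon)$ is bounded in $L^2(\Omega)$, the compactness theorem for two-scale convergence yields a subsequence (not relabelled) and some $u_0\in L^2(\Omega\times Y)$ with $u_\epsilon\overset{2}{\longrightarrow}u_0$; applying the same theorem to the bounded sequence $(\epsilon\nabla u_\epsilon)$ in $L^2(\Omega)^d$ and passing to a further subsequence, $\epsilon\nabla u_\epsilon\overset{2}{\longrightarrow}\chi_0$ for some $\chi_0\in L^2(\Omega\times Y)^d$. It then remains to identify $\chi_0=\nabla_{\vec{y}}u_0$.

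Second, fix a test vector field $\vec{\psi}\in\mathcal{D}(\Omega;C^\infty_\#(Y))^d$ and write $\vec{\psi}^\epsilon(\vec{x})=\vec{\psi}(\vec{x},\vec{x}/\epsilon)$. Integrating by parts,
\[
\int_\Omega \epsilon\nabla u_\epsilon(\vec{x})\cdot\vec{\psi}^\epsilon(\vec{x})\,\mathrm{d}\vec{x}
= -\int_\Omega u_\epsilon(\vec{x})\Big[(\nabla_{\vec{y}}\cdot\vec{\psi})\big(\vec{x},\tfrac{\vec{x}}{\epsilon}\big)+\epsilon(\nabla_{\vec{x}}\cdot\vec{\psi})\big(\vec{x},\tfrac{\vec{x}}{\epsilon}\big)\Big]\mathrm{d}\vec{x}.
\]
The left-hand side converges to $\frac{1}{|Y|}\int_\Omega\int_Y\chi_0\cdot\vec{\psi}\,\mathrm{d}\vec{y}\,\mathrm{d}\vec{x}$ by definition of two-scale convergence; on the right-hand side the $\epsilon$-prefactored term vanishes as $\epsilon\downarrow0$ because $\|u_\epsilon\|_{L^2(\Omega)}$ and $\|\nabla_{\vec{x}}\cdot\vec{\psi}^\epsilon\|_{L^2(\Omega)}$ are bounded, while the remaining term tends to $-\frac{1}{|Y|}\int_\Omega\int_Y u_0\,(\nabla_{\vec{y}}\cdot\vec{\psi})\,\mathrm{d}\vec{y}\,\mathrm{d}\vec{x}$. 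Hence
\[
\int_\Omega\int_Y\chi_0\cdot\vec{\psi}\,\mathrm{d}\vec{y}\,\mathrm{d}\vec{x}=-\int_\Omega\int_Y u_0\,\nabla_{\vec{y}}\cdot\vec{\psi}\,\mathrm{d}\vec{y}\,\mathrm{d}\vec{x}
\]
for every such $\vec{\psi}$. Because the test fields are $Y$-periodic in $\vec{y}$, integration by parts over the cell produces no boundary term, so this identity says precisely that $\chi_0=\nabla_{\vec{y}}u_0$ in the distributional sense with periodic boundary conditions.

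Third, since $\nabla_{\vec{y}}u_0=\chi_0\in L^2(\Omega\times Y)^d$, Fubini gives $u_0(\vec{x},\cdot)\in H^1(Y)$ for a.e.\ $\vec{x}\in\Omega$, and the periodicity just established upgrades this to $u_0\in L^2(\Omega;H^1_\#(Y))$, with $\epsilon\nabla u_\epsilon\overset{2}{\longrightarrow}\nabla_{\vec{y}}u_0$ as claimed. The delicate point is the second step: one must justify the $\epsilon$-level integration by parts, observe that the $1/\epsilon$ which $\nabla\vec{\psi}^\epsilon$ would normally produce is exactly cancelled by the $\epsilon$ multiplying $\nabla u_\epsilon$, and argue that the resulting limit identity is strong enough to promote $u_0$ from a mere $L^2(\Omega\times Y)$ function to one possessing a periodic $\vec{y}$-gradient — everything else is a routine application of the two-scale compactness theorem and the convergence definition.
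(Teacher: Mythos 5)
Your proposal is correct, and it reproduces essentially the standard argument of Allaire for Proposition~1.14(ii), which is exactly what the paper cites here — the paper itself gives no proof, only the reference to \cite{Allaire1992}. Your three steps (two-scale compactness extraction, $\epsilon$-level integration by parts with the crucial cancellation of the $\epsilon$ against the $1/\epsilon$ from $\nabla\vec{\psi}^\epsilon$, then reading off the weak identity to identify $\chi_0=\nabla_{\vec{y}}u_0$ and conclude periodic $H^1$-regularity in $\vec{y}$) are precisely the ingredients of Allaire's proof. One minor point of phrasing: in the third step, it is slightly clearer to separate the two pieces of information encoded in the final weak identity — testing against $\vec{\psi}$ compactly supported in $Y$ gives the distributional identity $\chi_0=\nabla_{\vec{y}}u_0$ in $\mathcal{D}'(Y)$ (hence $u_0(\vec{x},\cdot)\in H^1(Y)$ a.e.), while testing against general $Y$-periodic $\vec{\psi}$ forces the cell-boundary term in the integration by parts to vanish, which is what gives $u_0(\vec{x},\cdot)\in H^1_\#(Y)$. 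But this is a clarification, not a gap.
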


\begin{corollary}\label{H: c: 2-scale}
Let $(u_\epsilon)$ be a bounded sequence in $L^p(\Omega)$, with $1<p\leq\infty$. There exists a function $u_0(\vec{x},\vec{y})$ in $L^p(\Omega\times Y)$ such that, up to a subsequence, $u_\epsilon\overset{2}{\longrightarrow} u_0(\vec{x},\vec{y})$, i.e., for any function $\psi(\vec{x},\vec{y})\in\tens{D}(\Omega;C^\infty_\#(Y))$, we have
\begin{equation}
    \lim_{\epsilon\rightarrow0}\int_\Omega u_\epsilon(\vec{x})\psi\!\left(\vec{x},\frac{\vec{x}}{\epsilon}\right)\mathrm{d}\vec{x} = \frac{1}{|Y|}\int_\Omega\int_Yu_0(\vec{x},\vec{y})\psi(\vec{x},\vec{y})\mathrm{d}\vec{y}\mathrm{d}\vec{x}.\label{eq: apptwoscale3}
\end{equation}
See Corollary 1.15 in \cite{Allaire1992}.
\end{corollary}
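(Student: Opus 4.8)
The plan is to follow the classical functional-analytic argument for the two-scale compactness theorem. First I would fix the separable Banach space $X = L^q(\Omega;C_\#(Y))$ --- its separability being part of the First Oscillation Lemma (Lemma~\ref{H: t: oscillation}) --- and, for each $\epsilon$ in the fixed sequence, introduce the linear functional $\mu_\epsilon \in X'$ defined by $\mu_\epsilon(\psi) = \int_\Omega u_\epsilon(\vec{x})\,\psi(\vec{x},\vec{x}/\epsilon)\,\mathrm{d}\vec{x}$. Combining Hölder's inequality with the measurability of $\psi(\cdot,\cdot/\epsilon)$ and the bound \eqref{eq: appFOL1} of Lemma~\ref{H: t: oscillation} gives $|\mu_\epsilon(\psi)| \le \|u_\epsilon\|_{L^p(\Omega)}\,\|\psi(\cdot,\cdot/\epsilon)\|_{L^q(\Omega)} \le C\|\psi\|_X$, where $C$ is the uniform bound on $\|u_\epsilon\|_{L^p(\Omega)}$ granted by hypothesis. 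Hence $(\mu_\epsilon)_\epsilon$ lies in a bounded subset of $X'$.

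Next, since $X$ is separable, the sequential Banach--Alaoglu theorem yields a subsequence $\epsilon'\subset\epsilon$ and a functional $\mu_0\in X'$ with $\mu_{\epsilon'}(\psi)\to\mu_0(\psi)$ for every $\psi\in X$. The essential refinement is to sharpen the bound on this limit functional: passing to the limit in the previous estimate and invoking property~5 of Lemma~\ref{H: t: oscillation}, namely \eqref{eq: appFOL3}, which gives $\|\psi(\cdot,\cdot/\epsilon')\|_{L^q(\Omega)}\to|Y|^{-1/q}\|\psi\|_{L^q(\Omega\times Y)}$, one obtains $|\mu_0(\psi)|\le C\,|Y|^{-1/q}\,\|\psi\|_{L^q(\Omega\times Y)}$ for all $\psi\in X$. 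Because $X$ is dense in $L^q(\Omega\times Y)$ (property~2 of Lemma~\ref{H: t: oscillation}), $\mu_0$ extends uniquely to a bounded linear functional on $L^q(\Omega\times Y)$. As $1\le q<\infty$ and the product measure on $\Omega\times Y$ is $\sigma$-finite, the Riesz representation theorem furnishes a unique $u_0\in L^p(\Omega\times Y)$ with $\mu_0(\psi)=\frac{1}{|Y|}\int_\Omega\int_Y u_0(\vec{x},\vec{y})\psi(\vec{x},\vec{y})\,\mathrm{d}\vec{y}\,\mathrm{d}\vec{x}$; restricting to the smooth test functions in $\mathcal{D}(\Omega;C^\infty_\#(Y))\subset X$ then recovers exactly \eqref{eq: apptwoscale3}, i.e. $u_{\epsilon'}\overset{2}{\longrightarrow}u_0$.

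The point demanding the most care is the endpoint $p=\infty$ (so $q=1$): one must check that $L^1(\Omega;C_\#(Y))$ is still separable --- again supplied by the First Oscillation Lemma --- and that $(L^1(\Omega\times Y))'=L^\infty(\Omega\times Y)$, which is where $\sigma$-finiteness of the product measure enters; with these two facts the argument above is unchanged, and for the case $p=2$ of primary interest it is entirely routine. A minor bookkeeping matter is confirming that the bounded extension of $\mu_0$ from $X$ to $L^q(\Omega\times Y)$ is independent of the approximating sequences chosen, which is immediate from the uniqueness of bounded extensions of densely-defined bounded functionals.
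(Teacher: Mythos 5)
Your proposal is correct and reproduces the classical functional-analytic argument that the paper itself defers to via its citation of Corollary~1.15 of Allaire (1992) (the same proof appears in Lukkassen--Nguetseng--Wall): the uniform H\"older bound via property~3 of Lemma~\ref{H: t: oscillation}, sequential Banach--Alaoglu on the separable space $L^q(\Omega;C_\#(Y))$, the sharpened estimate on the weak-$*$ limit from property~5, density of $L^q(\Omega;C_\#(Y))$ in $L^q(\Omega\times Y)$ from property~2, and Riesz representation. Your treatment of the endpoint $p=\infty$, $q=1$ (separability of $L^1(\Omega;C_\#(Y))$ and the identification $(L^1(\Omega\times Y))'=L^\infty(\Omega\times Y)$ for a $\sigma$-finite product measure) is also sound; since the paper supplies only a citation and no argument of its own, there is nothing in the paper to contrast your proof against.
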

\begin{theorem}\label{H: t: tripprod}
Let $(u_\epsilon)$ be a sequence in $L^p(\Omega)$ for $1<p<\infty$, which two-scale converges to $u_0\in L^p(\Omega\times Y)$ and assume that
\begin{equation}\label{H: eq:  limit}
    \lim_{\epsilon\rightarrow0}\|u_\epsilon\|_{L^p(\Omega)} = \|u_0\|_{L^p(\Omega\times Y)}.
\end{equation}
Then, for any sequence $(v_\epsilon)$ in $L^q(\Omega)$ with $\frac{1}{p}+\frac{1}{q} =1$, which two-scale converges to $v_0\in L^q(\Omega\times Y)$, we have that
\begin{equation}
    \int_\Omega u_\epsilon(\vec{x})v_\epsilon(\vec{x})\tau\!\left(\vec{x},\frac{\vec{x}}{\epsilon}\right)\mathrm{d}\vec{x}\rightarrow\int_\Omega\frac{1}{|Y|}\int_Y u_0(\vec{x},\vec{y})v_0(\vec{x},\vec{y})\tau(x,\vec{y})\mathrm{d}\vec{y}\mathrm{d}\vec{x},\label{eq: apptwoscale4}
\end{equation}
for every $\tau$ in $\tens{D}(\Omega,C^\infty_\#(Y))$. Moreover, if the $Y$-periodic extension of $u$ belong to $L^p(\Omega;C_\#(Y))$, then
\begin{equation}
    \lim_{\epsilon\rightarrow0}\left\|u_\epsilon(\vec{x})-u_0\left(\vec{x},\frac{\vec{x}}{\epsilon}\right)\right\|_{L^p(\Omega)}=0.\label{eq: apptwoscalestrong}
\end{equation}
See Theorem 18 in \cite{LukkassenNguetsengWall2002}.
\end{theorem}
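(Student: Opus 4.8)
The plan is to deduce Theorem~\ref{H: t: tripprod} from the density of smooth two-scale test functions together with the extra rigidity that the norm-matching hypothesis~(\ref{H: eq:  limit}) imposes on the bare two-scale convergence $u_\epsilon\overset{2}{\longrightarrow}u_0$. Throughout write $f^\epsilon(\vec{x}):=f(\vec{x},\vec{x}/\epsilon)$, and read $\|\cdot\|_{L^p(\Omega\times Y)}$ with the normalized measure $|Y|^{-1}\,\mathrm{d}\vec{y}\,\mathrm{d}\vec{x}$, consistently with Definition~\ref{d: two-scale} and~(\ref{H: eq:  limit}). The linchpin is the auxiliary fact that $u_\epsilon$ is \emph{strongly} two-scale convergent, quantified by
\[
\lim_{\psi\to u_0}\ \limsup_{\epsilon\to0}\ \big\|u_\epsilon-\psi^\epsilon\big\|_{L^p(\Omega)}=0,\qquad\psi\in\mathcal{D}(\Omega;C^\infty_\#(Y)),
\]
the inner $\limsup$ being taken first. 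For $p=2$ this is transparent: expanding $\|u_\epsilon-\psi^\epsilon\|_{L^2(\Omega)}^2=\|u_\epsilon\|_{L^2(\Omega)}^2-2\int_\Omega u_\epsilon\psi^\epsilon\,\mathrm{d}\vec{x}+\|\psi^\epsilon\|_{L^2(\Omega)}^2$, the three terms converge as $\epsilon\to0$ respectively to $\|u_0\|_{L^2(\Omega\times Y)}^2$ by~(\ref{H: eq:  limit}), to $2\,\langle u_0,\psi\rangle_{L^2(\Omega\times Y)}$ by Definition~\ref{d: two-scale} (with $\psi$ as admissible test function), and to $\|\psi\|_{L^2(\Omega\times Y)}^2$ by part~5 of Lemma~\ref{H: t: oscillation}; these combine to $\|u_0-\psi\|_{L^2(\Omega\times Y)}^2$, which vanishes as $\psi\to u_0$. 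For general $1<p<\infty$ the same conclusion follows from the uniform convexity of $L^p$ (see the final paragraph).

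Granting this, the product limit~(\ref{eq: apptwoscale4}) is routine. Fix $\tau\in\mathcal{D}(\Omega;C^\infty_\#(Y))$ and $\eta>0$, and use the density of $\mathcal{D}(\Omega;C^\infty_\#(Y))$ in $L^p(\Omega\times Y)$ (part~2 of Lemma~\ref{H: t: oscillation}) to pick $\psi$ with $\|u_0-\psi\|_{L^p(\Omega\times Y)}<\eta$. Split
\[
\int_\Omega u_\epsilon v_\epsilon\,\tau^\epsilon\,\mathrm{d}\vec{x}=\int_\Omega\big(u_\epsilon-\psi^\epsilon\big)v_\epsilon\,\tau^\epsilon\,\mathrm{d}\vec{x}+\int_\Omega v_\epsilon\,(\psi\tau)^\epsilon\,\mathrm{d}\vec{x}.
\]
Since $\psi\tau\in\mathcal{D}(\Omega;C^\infty_\#(Y))\subset L^q(\Omega;C_\#(Y))$ is an admissible test function for $v_\epsilon\overset{2}{\longrightarrow}v_0$, the second integral tends to $\frac{1}{|Y|}\int_\Omega\int_Y v_0\psi\tau\,\mathrm{d}\vec{y}\,\mathrm{d}\vec{x}$. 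The first integral is at most $\|\tau\|_{L^\infty(\Omega\times Y)}\,\|v_\epsilon\|_{L^q(\Omega)}\,\|u_\epsilon-\psi^\epsilon\|_{L^p(\Omega)}$ in absolute value; testing Definition~\ref{d: two-scale} against functions of $\vec{x}$ alone shows $u_\epsilon$ and $v_\epsilon$ converge weakly, hence $\sup_\epsilon\|v_\epsilon\|_{L^q(\Omega)}<\infty$, while the auxiliary fact bounds $\limsup_\epsilon\|u_\epsilon-\psi^\epsilon\|_{L^p(\Omega)}$ by a quantity vanishing with $\eta$. Letting $\epsilon\downarrow0$ and then $\eta\downarrow0$, together with $\frac{1}{|Y|}\int_\Omega\int_Y v_0\psi\tau\to\frac{1}{|Y|}\int_\Omega\int_Y v_0u_0\tau$ as $\psi\to u_0$ in $L^p(\Omega\times Y)$ (H\"older's inequality, $v_0\tau\in L^q(\Omega\times Y)$), yields~(\ref{eq: apptwoscale4}).

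For the final assertion, assume the $Y$-periodic extension of $u_0$ lies in $L^p(\Omega;C_\#(Y))$. Then $u_0^\epsilon$ is a genuine measurable function with $\|u_0^\epsilon\|_{L^p(\Omega)}\le\|u_0\|_{B_p(\Omega,Y)}$ and $\|u_0^\epsilon\|_{L^p(\Omega)}^p\to\|u_0\|_{L^p(\Omega\times Y)}^p$ by parts~3 and~5 of Lemma~\ref{H: t: oscillation}. Approximating $u_0$ in $L^p(\Omega;C_\#(Y))$ by $\psi\in\mathcal{D}(\Omega;C^\infty_\#(Y))$ and combining the triangle inequality, part~3 of Lemma~\ref{H: t: oscillation} for $\|(\psi-u_0)^\epsilon\|_{L^p(\Omega)}\le\|\psi-u_0\|_{B_p(\Omega,Y)}$, and the auxiliary fact, one obtains
\[
\limsup_{\epsilon\to0}\|u_\epsilon-u_0^\epsilon\|_{L^p(\Omega)}\ \le\ \limsup_{\epsilon\to0}\|u_\epsilon-\psi^\epsilon\|_{L^p(\Omega)}+\|\psi-u_0\|_{B_p(\Omega,Y)},
\]
whose right-hand side is arbitrarily small; this is~(\ref{eq: apptwoscalestrong}).

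The main obstacle is the auxiliary fact for $p\ne2$, where the Hilbert-space identity is unavailable and the only ``weak'' information on $(u_\epsilon)$ is two-scale, not ordinary weak, convergence. The remedy is Clarkson's inequalities: for $p\ge2$,
\[
\|u_\epsilon-\psi^\epsilon\|_{L^p(\Omega)}^p\ \le\ 2^{p-1}\big(\|u_\epsilon\|_{L^p(\Omega)}^p+\|\psi^\epsilon\|_{L^p(\Omega)}^p\big)-\|u_\epsilon+\psi^\epsilon\|_{L^p(\Omega)}^p,
\]
and letting $\epsilon\to0$ — the first two terms by~(\ref{H: eq:  limit}) and part~5 of Lemma~\ref{H: t: oscillation}, the last by lower semicontinuity of the (normalized) $L^p$ norm under two-scale convergence applied to $u_\epsilon+\psi^\epsilon\overset{2}{\longrightarrow}u_0+\psi$ — and then $\psi\to u_0$ in $L^p(\Omega\times Y)$ drives the right-hand side to $2^{p-1}\!\cdot\!2\|u_0\|^p-\|2u_0\|^p=0$; the range $1<p\le2$ is handled identically with the dual Clarkson inequality. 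Equivalently one may run the whole proof through the periodic unfolding operator, an $L^p$-isometry (up to a boundary layer) turning strong two-scale convergence into ordinary strong convergence in $L^p(\Omega\times Y)$, which renders the auxiliary fact immediate. Everything else — density, boundedness of two-scale convergent sequences, admissibility of the test functions, and lower semicontinuity of the two-scale limit — is standard and already encapsulated in Definition~\ref{d: two-scale} and Lemma~\ref{H: t: oscillation}.
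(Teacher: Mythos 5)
The paper itself offers no proof of this theorem; it simply cites Theorem~18 of Lukkassen, Nguetseng and Wall (2002), so there is no in-paper argument to compare against and your write-up is a genuine derivation rather than a paraphrase. On its own terms the argument is essentially correct and follows the route that is standard in the two-scale literature: hypothesis~(\ref{H: eq:  limit}) upgrades ordinary two-scale convergence to \emph{strong} two-scale convergence, i.e.\ $\limsup_{\epsilon\to0}\|u_\epsilon-\psi^\epsilon\|_{L^p(\Omega)}$ can be made arbitrarily small by choosing $\psi\in\mathcal{D}(\Omega;C^\infty_\#(Y))$ close to $u_0$ in $L^p(\Omega\times Y)$; the product limit~(\ref{eq: apptwoscale4}) then follows by splitting $u_\epsilon v_\epsilon\tau^\epsilon$ around $\psi^\epsilon$ and using $\psi\tau$ as an admissible test function for $v_\epsilon$, and~(\ref{eq: apptwoscalestrong}) follows by a triangle inequality together with part~3 of Lemma~\ref{H: t: oscillation}. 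The Hilbert-space computation at $p=2$ and the Clarkson substitute for general $p$ are both sound (one does obtain $2^{p-1}\cdot 2\,\|u_0\|^p-\|2u_0\|^p=0$ in the limit $\psi\to u_0$, and the dual Clarkson inequality handles $1<p\le2$ analogously). Two ingredients that you invoke as ``standard and already encapsulated in Definition~\ref{d: two-scale} and Lemma~\ref{H: t: oscillation}'' actually are not contained there and deserve explicit citation: the lower semicontinuity $\liminf_{\epsilon}\|u_\epsilon\|_{L^p(\Omega)}\ge\|u_0\|_{L^p(\Omega\times Y)}$ (with the normalized measure) of the $L^p$ norm along two-scale convergent sequences, which your Clarkson step needs to control $\liminf_\epsilon\|u_\epsilon+\psi^\epsilon\|_{L^p(\Omega)}$ from below, and the $L^q$-boundedness of a two-scale convergent sequence, which gives $\sup_\epsilon\|v_\epsilon\|_{L^q(\Omega)}<\infty$. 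Both are theorems in their own right (derivable from the First Oscillation Lemma via duality and Banach--Steinhaus) and both appear in the same Lukkassen--Nguetseng--Wall reference, so the clean fix is to cite them there rather than attribute them to the appendix material.
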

These results generalize properties 3, 4 and 5 of the First Oscillation Lemma in such a way that the convergence applies to weak solutions, products and gradients AND it even guarantees that the convergence is strong for oscillating continuous functions.\\
Hence, two-scale convergence is suitable for upscaling problems.



\bibliographystyle{spmpsci}      
\bibliography{references1}   

%
%

\end{document}